\def\Op{\mathscr{O}'}
\def\Wt{\widetilde{W}}
\DeclareMathOperator{\Tr}{Tr}
\DeclareMathOperator{\Norm}{N}
\def\charp{{\varkappa_p}}
\def\charptwo{{\varkappa_{p,2}}}
\def\hZZ{\widehat{\ZZ}}
\def\hQQ{\widehat{\QQ}}
\def\hH{\widehat{\H}}
\def\hHx{\hH^\times}
\def\hO{\widehat{\O}}
\def\hOx{\hO^\times}
\def\sA{\mathscr{A}}
\def\sE{\mathscr{E}}
\def\smat#1{\left(\begin{smallmatrix}#1\end{smallmatrix}\right)}
\theoremstyle{plain}
\newtheorem{proposition}{Proposition}[section]
\newtheorem{theorem}[proposition]{Theorem}
\newtheorem{corollary}[proposition]{Corollary}
\newtheorem{lemma}[proposition]{Lemma}
\theoremstyle{definition}
\newtheorem{definition}[proposition]{Definition}
\newtheorem{notation}[proposition]{Notation}
\newtheorem{remark}[proposition]{Remark}
\newenvironment{mainthm}
  {\theoremA[Theorem \ref{thm:main}]}
  {\endtheoremA}
\newenvironment{rankinthm}
  {\theorem[Theorem \ref{thm:rankin}]}
  {\endtheorem}
\numberwithin{table}{section}
\DeclareMathOperator{\Bil}{Bil}
\DeclareMathOperator{\Hom}{Hom}
\DeclareMathOperator{\sign}{sign}
\DeclareMathOperator{\disc}{disc}
\def\new{\mathrm{new}}
\def\old{\mathrm{old}}
\newcommand{\calP}{\mathcal P}
\newcommand{\calC}{\mathcal C}
\newcommand{\calD}{\mathcal D}
\newcommand{\Ot}{{\tilde{\Om}}}
\newcommand{\Om}{{\mathscr{O}}}
\def\sA{\mathscr A}
\def\TT{\mathbb T}
\def\ZZ{\mathbb Z}
\def\NN{\mathbb N}
\def\RR{\mathbb R}
\def\CC{\mathbb C}
\def\QQ{\mathbb Q}
\def\<#1>{{\left\langle{#1}\right\rangle}}
\def\abs#1{{\left|{#1}\right|}}
\def\Z{{\mathbb Z}}             
\def\Q{{\mathbb Q}}             
\def\set#1{{\left\{{\def\st{\;:\;}#1}\right\}}}
\def\num#1{{\#{#1}}}
\def\numset#1{{\num{\set{#1}}}}
\let\kro\dkro
\def\H{{B}}                     
\def\Hh{\widehat{\H}}           
\def\Hx{{\H^\times}}            
\def\Hpx{{\H_p^\times}}         
\def\O{{R}}           
\def\Oh{\widehat{\O}} 
\def\Op{{\O_p}}                 
\def\Ol(#1){{\mathop{\O_l}(\id{#1})}}                 
\def\Or(#1){{\mathop{\O_r}(\id{#1})}}                 
\def\Orp(#1){{\mathop{\O_r}(\idp{#1})}}               
\def\Otern(#1){{\mathop{\O^0_r}(\id{a})}}    
\def\id#1{{\mathfrak{#1}}}      
\def\idp#1{{\id{#1}_p}}         
\def\cls#1{{[\id{#1}]}}         
\def\gen#1{{\mathop{\mathrm{gen}}[\id{#1}]}} 
\def\vec#1{{\mathbf{#1}}}       
\def\quat#1{{{\mathit{#1}}}}             
\def\quatp#1{\quat{#1}_p}     
\DeclareMathOperator{\norm}{{\mathscr N}}
\DeclareMathOperator{\trace}{{\mathrm{Tr}}}
\DeclareMathOperator{\normx}{{\Delta}}
\def\normid#1{{\norm{\id{#1}}}}
\DeclareMathOperator{\I}{{\mathscr I}}
\DeclareMathOperator{\Ix}{{\widetilde{\I}}}
\DeclareMathOperator{\M}{{\mathscr M}}
\DeclareMathOperator{\Mt}{\widetilde{{\mathscr M}}}
\def\T_#1(#2){{\mathop{\mathscr T}\nolimits_{#1}(\id{#2})}}
\def\TO(#1)_#2(#3){{\mathop{\mathscr T}\nolimits^{#1}_{#2}(\id{#3})}}
\def\t{{\mathop{t}\nolimits}}
\def\Hecke{{\phi}}
\def\HeckeRing{\mathbb{T}}
\def\A_#1(#2){{\mathop{\mathscr A}\nolimits_{#1}(\id{#2})}}
\def\Ax_#1(#2){{\mathop{\widetilde{\mathscr A}}\nolimits_{#1}(\id{#2})}}
\def\e#1{\vec{e}_{{#1}}}
\def\localconstant#1(#2){\varepsilon_{#1}({#2})}
\def\sign#1(#2){\epsilon_{#1}({#2})}
\def\charp{{\varkappa_p}}
\begin{document}

\title
  [Shimura correspondence for level $p^2$]
  {Shimura correspondence for level $p^2$ and the central values of L-series II}

\author{Ariel Pacetti}
\thanks{The first author was partially supported by CONICET PIP 2010-2012 GI
        and FonCyT BID-PICT 2010-0681}
\address{Departamento de Matem\'atica, Universidad de Buenos Aires,
         Pabell\'on I, Ciudad Universitaria. C.P:1428, Buenos Aires, Argentina}
\email{apacetti@dm.uba.ar}

\author{Gonzalo Tornar{\'\i}a}
\thanks{The second author was partially supported by ANII FCE 2009/2972}
\address{Centro de Matem\'atica, Facultad de Ciencias, Universidad de la Rep\'ublica\\
         Igu\'a 4225 esq. Mataojo, Montevideo, Uruguay}
\email{tornaria@cmat.edu.uy}

\maketitle

\begin{abstract} 
  Given a Hecke eigenform $f$ of weight $2$ and square-free level $N$,
  by the work of Kohnen, there is a unique weight $3/2$ modular form of
  level $4N$ mapping to $f$ under the Shimura
  correspondence. Furthermore, by the work of Waldspurger the Fourier
  coefficients of such a form are related to the quadratic twists of
  the form $f$. Gross gave a construction of the half integral weight
  form when $N$ is prime, and such construction was later generalized
  to square-free levels. However, in the non-square free case, the
  situation is more complicated since the natural construction is
  vacuous. The problem being that there are too many special points so
  that there is cancellation while trying to encode the information as
  a linear combination of theta series.

  In this paper, we concentrate in the case of level $p^2$, for $p>2$
  a prime number, and show how the set of special points can be split
  into subsets (indexed by bilateral ideals for an order of reduced
  discriminant $p^2$) which gives two weight $3/2$ modular forms
  mapping to $f$ under the Shimura correspondence. Moreover, the
  splitting has a geometric interpretation which allows to prove that
  the forms are indeed a linear combination of theta series associated
  to ternary quadratic forms.

  Once such interpretation is given, we extend the method of Gross-Zagier
  to the case where the level and the discriminant are not prime to
  each other to prove a Gross-type formula in this situation.
\end{abstract}

\section*{Introduction}

The theory of modular forms of half-integral weight was developed by
Shimura in \cite{Shimura}. There he defined a map known as the
``Shimura correspondence'' that associates to a modular form $g$ of
half-integral weight $k/2$ ($k$ odd), level $4N$ and character $\psi$,
such that $g$ is an eigenform for the Hecke operators,
a modular form $f$ of weight $k-1$, level $2N$ and 
character $\psi^2$. In the same work, he noted that the Fourier
coefficients of the half-integral weight modular form have more
information than the Fourier coefficients of the integral weight
modular form and raised the question of the meaning of these Fourier
coefficients. In $1981$, Waldspurger answered the question by relating
the Fourier coefficients of $g$ to the central values of twisted
$L$-series of $f$ (see \cite{Waldspurger}).

In \cite{Gross}, Gross gave an explicit method to
construct, when $f$ has weight $2$, prime level $p$ and trivial character,
a weight
$3/2$ modular form (of level $4p$ and trivial character) mapping
to $f$ via the Shimura correspondence.
A formula for the central values of twists of the $L$-series of $f$ by
imaginary quadratic characters was proved by Gross.
His result was later generalized
(by a different method) to odd \emph{square-free} levels $N$ in \cite{BSP2},
where the authors construct one modular form whose Fourier
coefficients are related to central values of twists of the $L$-series
of $f$ by imaginary quadratic characters with
discriminants satisfying $2^t$ quadratic conditions (where $t$ is the
number of prime factors of $N$).

Generalizing Gross method in a different direction, the case of level
$p^2$, for a prime $p>2$, was studied in
\cite{Pacetti-Tornaria}. In that work, given a modular form of weight
$2$ and level $p^2$, \emph{two} weight $3/2$ modular forms were
constructed, and a ``Gross formula'' was conjectured (see \cite[Conjecture
$2$]{Pacetti-Tornaria}). Some examples as well as an application to
computing central values of twists by \emph{real} quadratic characters
were presented in \cite{Pa-To2} and \cite{ell-data}.
The main purpose of this paper is to explain the nature of such
formula and also prove it. 

Let $f=\sum a(n)\,q^n$ be a cusp
form of weight $2$ and level $N$.
Let $K$ be an imaginary quadratic field of discriminant $D<0$ and let
$\Om_K$ be its ring of integers. We denote $\I(\Om_K)$ the class group
of $\Om_K$.

We define the twisted $L$-series
\[
L(f, D, s) := \sum \frac{a(n)}{n^s}\, \kro{D}{n}.
\]
Note that $L(f, D, s)$ as defined here may not be a primitive
$L$-series when $N$ and $D$ have a common factor.
Our main result, Theorem~\ref{thm:main}, deals with the case
$N=p^2$, for $p>2$ an odd prime, and $D=-pd$ (with $p\nmid d$)
a fundamental discriminant, but Section~\ref{sec:bilateral} and the appendix
on Rankin's method
are more general, including the case where $N$ and $D$ have a common
factor as required for Section~\ref{sec:main}.
Note that the results in Section~\ref{sec:specialpoints} 
assume that $D$ is odd, but the main result in Section~\ref{sec:main}
is proved for odd and even discriminants altogether.

The proof of our formula consists of two parts.  First, we need to
compute the Fourier coefficients of the half-integral weight modular
forms constructed in \cite{Pacetti-Tornaria}, and give an
interpretation of such construction in terms of special points. In
Section~\ref{sec:bilateral}, we give an ad\`elic definition of the special points of
discriminant $D$ and in Proposition
\ref{prop:specialpointscorrespondence} we prove that the ad\`elic
definition coincides with Eichler's original formulation. This
interpretation is crucial to split the special points of discriminant
$D$ into subsets indexed by bilateral ideals, as defined in \eqref{eq:splitting}.

To compute the Fourier coefficients of the weight $3/2$ modular forms
attached to such splitting, we fix a quaternion algebra over $\Q$
ramified at $p$ and $\infty$, and an order $\Ot$ with reduced
discriminant $p^2$. We consider the algebra $\HeckeRing_0$ of Hecke
operators with index prime to $p$ acting on $\M(\Ot)$, a vector space
(over $\RR$) spanned by representatives of $\Ot$-ideal classes. Since
the eigenspaces of $\HeckeRing_0$ do not have multiplicity one (in
general), we need to add some extra operators.

The group of bilateral $\Ot$-ideals modulo $\Q^\times$-equivalence is
a dihedral group of order $2(p+1)$. The norm $p$ bilateral
$\Ot$-ideals can be taken as representatives for the symmetries. Each
bilateral $\Ot$-ideal defines an operator in $\M(\Ot)$. The operator
$W_{\id{p}}$ associated to a norm $p$ bilateral $\Ot$-ideal $\id{p}$
commutes with $\HeckeRing_0$ and is self-adjoint for the natural inner
product of $\M(\Ot)$ (defined in \eqref{eq:heightpairing} below), although
operators related to different norm $p$ bilateral $\Ot$-ideals clearly
do not commute. Considering the algebra generated by the Hecke
operators and one $W_{\id{p}}$, a multiplicity one theorem does hold
(see Theorem \ref{thm:multone}).

We show that there is a connection between bilateral $\Ot$-ideals of
norm $p$ and suborders of $\Ot$ (in the sense of
\cite{Pacetti-Tornaria}) which associates to $\id{p}$ the order $\ZZ +
\id{p}$. This relation allows to define for each ideal $\id{p}$, a map
\[
\Theta_{\id{p}}: \M(\Ot) \mapsto M_{3/2}(4p^2,\charp),
\] 
where $\charp(n) := \kro{p}{n}$ (see Section \ref{thetacorr}). If $\vec{e}$ is an
eigenvector for the Hecke operators such that
$\Theta_{\id{p}}(\vec{e}) \neq 0$, then $W_{\id{p}}(\vec{e}) =
\vec{e}$ (see Proposition~\ref{prop:Wp-action}). This implies that the
only eigenvectors to be considered for $\Theta_{\id{p}}$ are those
where $W_{\id{p}}$ acts trivially. This explains the orthogonality
condition on the eigenvector $\vec{e}_{f,\Ot}$ needed for Conjecture 2
in \cite{Pacetti-Tornaria}.

In Section~\ref{sec:specialpoints} we relate the Fourier coefficients
of a modular form coming from Rankin's method (see Theorem~\ref{thm:rankin})
to the height of special points. This allows,
given a character $\varphi$ of the class group $\I(\Om_K)$, to relate the
central value of $$L_\varphi(f,s):=\sum_{\sA}\varphi(\sA)L_\sA(f,s)$$
to the height of a sum of special points
of discriminant $D$ (Proposition~\ref{prop:centralvalue}).
The special case $\varphi=1_D$ (the trivial character on $\I(\Om_K)$)
relates to central values of twists of
the $L$-series by the factorization
\[
   L_{1_D}(f,s) = L(f, s) \, L(f, D, s).
\]

The second part of the proof is the well known Rankin's method. This
part is a little more technical hence it is left to the appendix.
Following \cite{Gross} and \cite{GZ} we define a Rankin
convolution $L$-series and using Rankin's method we compute its
central value. This is done in a very similar way than
that of \cite{GZ}. The difficulty in our case comes from the fact that
the level of $f$ and the discriminant of the imaginary quadratic field
are not prime to each other. The formula for the central value of
$L_{\sA}(f,s)$ (for an ideal class $\sA\in\I(\Om_K)$) proved in
Theorem \ref{thm:rankin} is similar to \cite[Proposition 4.4]{GZ} with
the condition $\gcd(N,D)=1$ removed.  In addition to giving a more
complete result in general, this lifts an important restriction when
$N$ is not squarefree; for instance, when $N$ is a perfect square,
both sides in the formula in Theorem \ref{thm:rankin} vanish trivially
for $\gcd(N,D)=1$.

In the last section, we relate the heights of special points to the
Fourier coefficients of ternary theta series. The key idea is to split
the set of special points of discriminant $D$ into $p+1$ subsets,
indexed by the norm $p$ bilateral $\Ot$-ideals. Note that depending on
whether $\kro{D/p}{p}$ is a square or not, half of these subsets will
be empty, while the other half will have the same number of elements. By
counting the number of special points in each set, we conclude the
proof of our main result.

\begin{mainthm}
Let $f$ be a new eigenform of weight $2$, level $p^2$ with $p>2$ an odd prime.
Fix a norm $p$ bilateral $\Ot$-ideal $\id{p}$,
and let $\vec{e}_f$ be an eigenvector in
the $f$-isotypical component of
$\M(\Ot)$ such that $W_{\id{p}}(\vec{e}_f) = \vec{e}_f$.
\par
If $d$ is an integer such that $D=-pd<0$ is a fundamental discriminant, 
and such that $\kro{d}{p}=\chi(\id{p})$, then
\[
L(f,1)\,L(f,D,1)
=
4 \pi^2\,\frac{\<f,f>}{\<\vec{e}_f,\vec{e}_f>}
                        \,\frac{c_d^2}{\sqrt{pd}},
\]
where the $c_d$ are the Fourier coefficients of
$\Theta_{\id{p}}(\vec{e}_f) = \sum_{d \ge 1}c_d\,q^d$.
\end{mainthm}

This formula is the same as Conjecture $2$ in \cite{Pacetti-Tornaria},
with the difference of a factor of $8 \pi^2$ coming from a different
normalization in the Petersson inner product. The extra factor of
$\frac{p}{p-1}$ which shows up in \cite{Pacetti-Tornaria},
when $f$ is the quadratic twist of a level $p$ form, is due to the
fact that $L(f,D,s)$ we use here is not primitive at $p$ in this case.

The case of odd discriminant $D$ is proved using the results in
Section~\ref{sec:specialpoints}.
We avoid the technical difficulties of the case of
even discriminants by resorting to a theorem of Waldspurger, which
allows us to recover the formula for even discriminants from the case
of odd discriminants.

\smallskip
\noindent {\bf Acknowledgement:}
The first author would like to thank the Centro de Mate\-m\'a\-tica
of the Facultad de Ciencias for its hospitality during different visits.

\section{Quaternion algebras, bilateral ideals and special points}
\label{sec:bilateral}

Let $\H$ be a quaternion algebra over $\Q$, i.e. a central
simple algebra of dimension $4$ over $\Q$.  Given a place $v$ of $\Q$,
let $\H_v:= \H \otimes \Q_v$ be its completion at $v$. The algebra
$\H_v$ is either a division algebra or isomorphic to the algebra $M_2(\Q_v)$
of $2 \times 2$ matrices with coefficients in $\Q_v$. The place $v$ is
said to be \emph{ramified} if $\H_v$ is a division algebra and
\emph{split} if not. The number of ramified places is finite and
even. The discriminant of $\H$ is the product of all ramified primes
of $\Q$. 

We require $\H$ to be definite or ramified at $\infty$,
meaning $\H_\infty=\H\otimes\RR$ is a division algebra (the Hamilton
quaternions). The discriminant of $\H$ is thus the product of an
\emph{odd} number of primes.

It is well known
that $\H$ has an antiautomorphism called \emph{conjugation} and for
$\quat{x} \in \H$ we denote its action by $\bar{\quat{x}}$.
For $\quat{x}\in\H$ we define $\norm\quat{x} = \quat{x}
\bar{\quat{x}}$ and $\trace\quat{x} = \quat{x} + \bar{\quat{x}}$
the \emph{reduced norm} and \emph{reduced trace} of $\quat{x}$,
respectively.
The \emph{norm} of a lattice $\id{a}$ is defined as
$\norm\id{a}:=\gcd\set{\norm\quat{x}\st\quat{x}\in\id{a}}$.
We equip $\id{a}$ with the quadratic form
$\norm_{\id{a}}(\quat{x}):=\norm\quat{x}/\norm\id{a}$,
which is primitive; its determinant is a square,
and we denote its positive square root by $\disc(\id{a})$.
In particular, when $\O\subseteq\H$ is an order, $\disc(\O)$
is its \emph{$($reduced$)$ discriminant}.
The subscript $p$ will denote localization at $p$, namely
$\idp{a} := \id{a} \otimes \Z_p$.

Given a lattice $\id{a}$ in $\H$, its right order is given by
\[
   \Or(a) := \set{\quat{x}\in\H \st \id{a}\quat{x}\subseteq\id{a}}.
\]
The left order of $\id{a}$ is defined similarly.  If $\O$ is an order
in $\H$, we let $\Ix(\O)$ be the set of \emph{left $\O$-ideals},
i.e. the set of lattices $\id{a}\subseteq\H$ such that
$\idp{a}=\O_p\quatp{x}$ for every prime $p$, with
$\quatp{x}\in\Hpx$. Note that we do not define what an ideal in a
quaternion algebra is since we will only deal with left $\O$-ideals;
we recomend the reader to look at \cite{Vigneras} for such definition
and the theory of ideals in general. We just want to remark that the
condition of $\id{a}$ being locally principal is equivalent to
$\id{a}$ being a projective $\O$-module, which is the standard
definition of an ideal for a non-maximal order of a number field. It
is clear from the definition that if $\id{a}$ is a left $\O$-ideal,
its left order is just $\O$.

Let $\Mt(\O)$ be the vector space over $\RR$ with basis $\Ix(\O)$. Consider the 
\emph{height pairing},
\begin{equation}
  \<\id{a},\id{b}> :=\tfrac{1}{2}\,\numset{\quat{x}\in\Hx \st \id{a}\quat{x} = \id{b}}
  =
  \begin{cases}
     \frac{1}{2}\,\num{\Or(a)^\times} &  \text{if $\id{a}\quat{x} =
       \id{b}$, $x \in \H^\times$,}\\
     0                   &  \text{otherwise,}
  \end{cases}
\label{eq:heightpairing}
\end{equation}
as an inner product on it.  For a geometric interpretation of the
height pairing see \cite[\S4]{Gross}, where it is introduced as a
pairing on the Picard group of certain curves of genus zero.  In this
geometric context, special points (that will be defined in Section
\ref{sec:specialpoint}) should be regarded as analogues of Heegner
points on modular curves.

More generally, given $\O$-ideals $\id{a}$ and
$\id{b}$, define
\[
\Hom(\id{a},\id{b}) := \set{ u \in B^\times \st \id{a} u \subset \id{b}}.
\]
Then $\<\id{a},\id{b}> = \frac{1}{2} \# \{u \in \Hom(\id{a},\id{b}) \,
: \, \norm{u}=\frac{\normid{b}}{\normid{a}}\}$.

Let $\id{a}\in\Ix(\O)$, and $m\geq 1$ an integer. We set
\[
   \T_m(a) := \set{\id{b}\in\Ix(\O) \st
        \id{b}\subseteq\id{a}, \quad
        \norm\id{b}=m\,\norm\id{a} }.
\]

The \emph{Hecke operators} $\t_m:\Mt(\O)\rightarrow\Mt(\O)$ are
then defined by
\[
  \t_m\id{a}  := \sum_{\id{b}\in\T_m(a)} \id{b}
\]
for $m\geq 1$ and $\id{a}\in\Ix(\O)$, and extended by linearity to all
of $\Mt(\O)$.  
Moreover, we have
\[
  \<\id{a},\t_m\id{b}> =
  \frac{1}{2} \#\set{u\in \Hom(\id{a},\id{b}) \st \norm{u} =
  m\,\frac{\normid{b}}{\normid{a}}},
\]
and the Hecke operators are self-adjoint with respect to the height
pairing \cite[Proposition 1.3]{Pacetti-Tornaria}.

We define an equivalence relation on the set of left $\O$-ideals by
$\id{a},\id{b}\in\Ix(\O)$ are in the same class if
$\id{a}=\id{b}\quat{x}$, for some $\quat{x}\in\Hx$; we write $\cls{a}$
for the class of $\id{a}$. The set of all left $\O$-ideal classes,
which we denote by $\I(\O)$, is known to be finite. If we denote by
$\M(\O)$ the vector space over $\RR$ with basis $\I(\O)$, it has an inner
product and an action of Hecke operators by considering the quotient
map
\[
\Mt(\O) \twoheadrightarrow \M(\O).
\]
Note that $\I(\O)$ is an orthogonal basis of $\M(\O)$, and it is clear that
$\<\,,\,>$ is positive definite on $\M(\O)$.

The Hecke operators $t_m$ with $(m,\disc(\O))=1$ generate a
commutative algebra $\HeckeRing_0$, which is indeed generated by the
$t_p$ with $p\nmid\disc(\O)$ prime. Since the Hecke operators are
self-adjoint it follows, by the spectral theorem, that 
$\M(\O)$ has an orthogonal basis of common eigenvectors for
$\HeckeRing_0$.

We remark that $\M(\O)$ has a natural integral structure as the free
$\ZZ$-module spanned by $\I(\O)$ which is quite important; the height
pairing and the Hecke operators are defined over $\ZZ$.
However, the eigenvectors may not be defined over $\ZZ$, but only over
a totally real number field. For our purposes, using $\RR$ as the
field of coefficients will suffice.

The left $\O$-ideal $\id{a}$ is \emph{bilateral} if its right order is
also $\O$. The set of bilateral $\O$-ideals forms a group under ideal
multiplication. This group contains the principal ideals generated by
non-zero rational elements, which we denote by $\Q^\times$, in its
center, so we can consider bilateral $\O$-ideals modulo
$\Q^\times$-equivalence, namely two bilateral ideals $\id{a}$ and
$\id{b}$ are $\Q^\times$-equivalent if $\id{a}=\id{b}x$ with
$x\in\Q^\times$.  The group of bilateral $\O$-ideals modulo
$\Q^\times$-equivalence acts on $\M(\O)$ by left multiplication on the
basis elements. We denote $W_\id{m}$ the operator corresponding to the
bilateral $\O$-ideal $\id{m}$.

\begin{remark}
\label{remark:bilateral}
Note that this action commutes with the action
of $\HeckeRing_0$. Furthermore, these operators are unitary; hence,
those of order $2$ are self-adjoint.
On the other hand, the group of bilateral $\O$-ideals modulo $\Q^\times$ needs
not to be commutative (see Section~\ref{section:Ot}).
\end{remark}

We now give an ad\`elic reinterpretation of the ideal theory which will
allow us to work locally. Let $\hZZ=\prod_p{\ZZ_p}$ be the profinite
completion of $\ZZ$, and let $\hQQ:=\hZZ\otimes\QQ$ the ring of finite
ad\`eles of $\QQ$. Let also $\hH:=\H\otimes\hQQ$ and
$\hO:=\O\otimes\hZZ$. By the Eichler local-global principle for
lattices, we have a bijection between global lattices $\id{a}$ in $\H$ and
collections $\set{\idp{a}}$ of local lattices such that
$\idp{a}=\O_p$ for all $p$ except finitely many.
Hence, if $\id{a}$ is a left $\O$-ideal with $\idp{a}=\O_p\quatp{x}$, 
we have $(\quatp{x})\in\hHx$, and any
$(\quatp{x})\in\hHx$ determines an ideal in $\H$ in
that way. Furthermore, $\hOx$ acts on $\hHx$ by left multiplication,
and each ideal corresponds to a unique orbit for this action, i.e.
\[
\text{left $\O$-ideals}\quad\longleftrightarrow\quad\hOx\backslash \hHx.
\]

Note that if $\id{a}\leftrightarrow(\quatp{x})$, its right order
$\Or(a)$ is given locally by $\quatp{x}^{-1}\O_p\quatp{x}$. Hence, if
we set
\[
N(\hOx):=\set{(\quatp{x})\in\hHx \st \quatp{x}\O_p=\O_p\quatp{x}
\quad \forall p},
\]
we have the correspondence
\[
\text{bilateral $\O$-ideals}\quad\longleftrightarrow\quad\hOx\backslash N(\hOx).
\]
Moreover, if $p\nmid\disc(\O)$, we have that $\O_p$ is a maximal order 
in the split quaternion algebra over $\Q_p$ and its local
normalizer $N(\O_p^\times)=\O_p^\times \Q_p^\times$
(this is just a statement about the ring of 2 by 2 matrices with
coefficients in $\Z_p$, see~\cite[Proposition 1]{Eichler}). Hence the group of
bilateral $\O$-ideals modulo $\Q^\times$-equivalence,
$\hOx\backslash N(\hOx) / \Q^\times$, equals the finite local product
\[
\prod_{p\mid\disc(\O)} \O_p^\times\backslash N(\O_p^\times) / \Q_p^\times.
\]

\subsection{Special points}
\label{sec:specialpoint}
  Let $B$ be a quaternion algebra (over $\QQ$) of discriminant $N$,
  and let $K \subset B$ be a quadratic subfield of discriminant
  $D_0$. We note that
  \begin{itemize}
  \item If $B$ is definite, then $D_0<0$; and
  \item if $p\mid N$, then $\kro{D_0}{p}\neq 1$.
  \end{itemize}
  These are all the \emph{local} obstructions for such an embedding to
  exist; by Hasse's principle, there are no additional \emph{global}
  obstructions, i.e. if $K$ is an imaginary quadratic field of
  discriminant $D_0$ and for each prime number $p \mid N$, $\kro{D_0}{p}
  \neq 1$ then there exists an embedding of $K$ into $\H$.

The situation for orders is quite different. If $K$ embeds into $\H$,
let $\O$ be an order in $B$, and let $O:=\O\cap K$.
Then $O$ is an order in $K$ of discriminant $D=D_0 s^2$, for some
$s\in\ZZ$.
Conversely, given an order $O$ in $K$, 
there might be new local obstructions
for the existence of an embedding of $O$ into $\O$,
for example:
  \begin{itemize}
  \item If $\O$ is an order of discriminant $p^2$ (which are defined in \cite{Pizer2} and called ``orders of level $p^2$''), then $p\mid D$.
  \item If $\O$ is an order of level $p^2$ with character sign
    $\sigma$ (see Remark \ref{rem:char} for the definition), then
    $p\mid D$ and $\kro{D/p}{p}=\sigma$.
  \end{itemize}
Also, there are of course \emph{global} obstructions. However, if $O$
can be locally embedded in $\O$ at every place, it follows that $O$ can
be embedded in some order $\O'$ that is locally conjugate to $\O$
(i.e. in the same \emph{genus}). From now on we fix an order $\O$ in
the quaternion algebra $\H$. 

\begin{definition} If $O$ is an order in $K$ of discriminant $D$, a special
  point for $O$ is a pair $(\id{a},\psi)$, where $\id{a}$ is a left
  $\O$-ideal and $\psi:K \hookrightarrow \H$ is an embedding such that
  $\Or(\id{a}) \cap \psi(K) = \psi(O)$.
\end{definition}

This means that a special point of discriminant $D$ is an optimal
embedding of $O$ into the right order of an ideal $\id{a}$. Note that
this definition is slightly different from the one given in
\cite{Eichler}. There a special point is just a pair $(\psi,\O')$,
where $\O'$ is in the same genus of $\O$ and $\psi$ is an optimal
embedding of $O$ into $\O'$. Clearly given two orders $\O_1$, $\O_2$
in the same genus, there exists an ideal $\id{a}$ whose left order is
$\O_1$ and whose right order is $\O_2$, but there might be more than
one. The advantage of our definition will become clear while counting
classes of special points and their ad\`elic description.

The group $\H^\times$ has a right action on special points given for
$\alpha \in \H^\times$ by $(\id{a},\psi) \cdot \alpha =
(\id{a}\alpha,\alpha^{-1}\psi\alpha)$. If we fix a set of
representatives $\I(\O) =
\set{\id{a}_1,\ldots \id{a}_h}$ for the ideal classes,
any special point is equivalent to
$(\id{a}_i, \psi)$, where $\psi$ is an optimal embedding into
$\Or(\id{a}_i)$. Furthermore, $(\id{a}_i,\psi_1) \sim
(\id{a}_i,\psi_2)$ if and only if there exists $\alpha \in
\Or(\id{a}_i)^\times$ such that $\psi_1 = \alpha^{-1} \psi_2
\alpha$. Noting that if $(\id{a}_i,\psi)$ is a special point,
$\psi(O)^\times$ acts trivially, Eichler deduces the formula for the
number of non-equivalent special points of discriminant $D$ where $\O$
is an Eichler order (see \cite[Proposition 5]{Eichler} for Eichler orders
of square free level and \cite{Hijikata} for the general case). If $D$
is a fundamental discriminant, the formula reads
\begin{equation}
\label{eichler-formula}
\prod_{p \mid N}\left((1-\kro{D}{p}\right) \prod_{p \mid H} \left((1 + \kro{D}{p}\right) h(O_D),
\end{equation}
where $\H$ is the quaternion algebra with discriminant $N$ and $\O$ is
an Eichler order of discriminant $HN$.
The Kronecker symbols represent the condition for such
an embedding to exist.

We want to give an ad\`elic definition for the classes of special
points. Following the previous notation, let
$\widehat{O}:=O\otimes\widehat{\ZZ}$ and $\widehat{K}:=K\otimes\widehat{\QQ}$.
We will assume that $\widehat{K}\cap\widehat{\O}=\widehat{O}$.

Denote
by $\Ix(O)$ the set of $O$-(fractional) ideals and by $\I(O)$ the set
of $O$-ideal classes. We have the following ad\`elic interpretation:
  \par\vspace{2ex}

  \begin{tabular}{l@{\qquad}l@{\quad$\longleftrightarrow$\quad}l}
  $\O$-ideals:
    & $\Ix(\O)$
    & $\widehat{\O}^\times\backslash\widehat{B}^\times$
    \\
  $\O$-ideals modulo scalars:
    & $\Ix(\O)/\QQ^\times$
    & $\widehat{\O}^\times\backslash\widehat{B}^\times/\QQ^\times$
    \\
  $K$-points:
    & $\Ix(\O)/K^\times$
    & $\widehat{\O}^\times\backslash\widehat{B}^\times/K^\times$
    \\
  $\O$-classes:
    & $\I(\O)$ 
    & $\widehat{\O}^\times\backslash\widehat{B}^\times/B^\times$
    \\
  \\
  \end{tabular}

  \begin{tabular}{l@{\qquad}l@{\quad$\longleftrightarrow$\quad}l}
  $O$-ideals:
    & $\Ix(O)$
    & $\widehat{O}^\times\backslash\widehat{K}^\times$
    \\
  $O$-ideals modulo scalars:
    & $\Ix(O)/\QQ^\times$
    & $\widehat{O}^\times\backslash\widehat{K}^\times/\QQ^\times$
    \\
  $O$-classes:
    & $\I(O)$
    & $\widehat{O}^\times\backslash\widehat{K}^\times/K^\times$
    \\
  \end{tabular}

\medskip

Note that the height pairing defined at the beginning of this section
induces an inner product on the $\Z$-module spanned by $K$-points and
$R$-classes as well.

\begin{lemma}
  The embedding $K\subset B$ induces an injective map (by right multiplication)
  \[
     \Ix(O) \hookrightarrow \Ix(R).
  \]
\end{lemma}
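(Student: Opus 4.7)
The plan is to use the ad\`elic dictionary displayed immediately before the lemma. Under those identifications, $\Ix(O) \leftrightarrow \widehat{O}^\times\backslash\widehat{K}^\times$ and $\Ix(\O) \leftrightarrow \hOx\backslash\hHx$, and the map induced by right multiplication is simply the map induced by the inclusion $\widehat{K}^\times \hookrightarrow \hHx$ on double cosets, namely
\[
   \widehat{O}^\times\backslash\widehat{K}^\times
   \longrightarrow
   \hOx\backslash\hHx,
   \qquad
   \widehat{O}^\times (\beta_p) \longmapsto \hOx (\beta_p).
\]
Concretely, if $\id{b}\in\Ix(O)$ is locally given by $\id{b}_p = O_p\,\beta_p$, then the associated left $\O$-ideal is $\O\cdot\id{b}$, which has local completions $\O_p\cdot O_p\,\beta_p = \O_p\,\beta_p$, confirming it is a locally principal left $\O$-ideal.

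First I would check the map is well defined: replacing $(\beta_p)$ by $(u_p\beta_p)$ with $(u_p)\in\widehat{O}^\times\subseteq\hOx$ gives the same class in $\hOx\backslash\hHx$.

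Next, for injectivity, suppose $(\beta_p), (\gamma_p)\in\widehat{K}^\times$ satisfy $\hOx(\beta_p) = \hOx(\gamma_p)$ in $\hOx\backslash\hHx$. Then there exists $(u_p)\in\hOx$ with $\gamma_p = u_p\,\beta_p$ for every $p$, so
\[
   u_p = \gamma_p\,\beta_p^{-1} \in \widehat{K}^\times \cap \hOx.
\]
The hypothesis $\widehat{K}\cap\hO=\widehat{O}$ (stated just before the lemma) yields $u_p \in \widehat{O}$, and the same argument applied to $u_p^{-1}$ gives $u_p^{-1}\in\widehat{O}$. Hence $(u_p)\in\widehat{O}^\times$, so $(\beta_p)$ and $(\gamma_p)$ represent the same class in $\widehat{O}^\times\backslash\widehat{K}^\times$, i.e.\ define the same $O$-ideal.

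This is essentially a one-line argument once the ad\`elic translation is in place, so there is no real obstacle; the only input beyond bookkeeping is the assumption $\widehat{K}\cap\hO=\widehat{O}$, which is precisely what makes the map injective and which is needed to rule out that a non-trivial $O$-ideal class could become trivial after extending scalars to~$\O$.
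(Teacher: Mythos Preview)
Your proof is correct and follows essentially the same approach as the paper's. The paper's version is simply more terse: it observes that the composite $\widehat{K}^\times\rightarrow\widehat{B}^\times\rightarrow\widehat{R}^\times\backslash\widehat{B}^\times$ has kernel $\widehat{K}^\times\cap\widehat{R}^\times=\widehat{O}^\times$, which is exactly the computation you spell out when checking injectivity.
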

  \begin{proof}
  The inclusion $K\subseteq B$, composed with a projection, induces a map
  $\widehat{K}^\times\rightarrow\widehat{B}^\times\rightarrow\widehat{R}^\times\backslash\widehat{B}^\times$,
  and it is clear that the kernel of this composite map is
  $\widehat{K}^\times\cap\widehat{R}^\times=\widehat{O}^\times$.
\end{proof}
This induces the following diagram:
  \[
  \xymatrix{
  \text{$O$-ideals} \ar@{^{(}->}[rr]\ar@{->>}[d] & & \text{$R$-ideals} \ar@{->>}[d]\\
  \text{$O$-classes} \ar@{^{(}->}[rr]\ar@{..>}[drr]  & & \text{$K$-points} \ar@{->>}[d]\\
  & & \text{$R$-classes}
  }
\label{diag1}
  \]
\medskip

\noindent where the horizontal arrows are injective, and the vertical arrows
surjective.  Note that, despite the $O$-classes and the $R$-classes
being independent of the embedding $K\subset B$, the dotted map does
indeed depend on the choice of embedding, as do the two horizontal maps.

\subsection{$O$-points} From now on, we fix an embedding $i:K
\hookrightarrow B$. For $x\in\Hh^\times$, we define $\Oh_x:=x^{-1}\Oh
x$, $R_x:=B\cap\Oh_x$, and $O_x:=K\cap\Oh_x$. Note that $R_x$ is an
order in $B$ (the \emph{right} order of $\Oh x$) locally conjugate to
$R$, and that $O_x$ is an order in $K$.  Note also that $R_x$ depends
only on the class of $x$ as an $R$-ideal modulo scalars, and that
$O_x$ depends only on the class of $x$ as a $K$-point.

\par 
If we set $\widehat{N}_O:=\set{x\in \H^\times \st O_x=O}$, then we define the
\emph{$O$-points} as the elements of $\Oh^\times\backslash\widehat{N}_O/K^\times$.

\begin{proposition} If $O$ is an order in $K$ of discriminant $D$, then the
classes of special points for $O$ are in one-to-one correspondence
with the $O$-points.
\label{prop:specialpointscorrespondence}
\end{proposition}
\begin{proof} Recall that we fixed an embedding $i:K \hookrightarrow
  \H$. If $(\id{a},\psi)$ is a special point of discriminant $D$, there
  exists $\alpha \in \H^\times$ such that $\alpha^{-1}\psi\alpha =
  i$. Furthermore, $\alpha$ is determined up to multiplication on the
  right by $K^\times$. Then the point $(\id{a},\psi) \sim
  (\id{a}\alpha,i)$. Since $\id{a}\alpha$ is a left $\O$-ideal, there
  exists $\quat{x} \in \Hh^\times$ such that $\Oh \quat{x} =
  \id{a}\alpha$. Then we associate to the pair $(\id{a},\psi)$ the
  $O$-point $\quat{x}$. It is immediate that $\quat{x}$ is in
  $\widehat{N}_O$ and that it is defined up to multiplication on the right by
  $K^\times$ and multiplication on the left by $\Oh^\times$, i.e. it is a point in
  the double quotient $\Oh^\times \backslash\widehat{N}_O/K^\times$.

Conversely, to an element $\quat{x} \in \widehat{N}_O$, we
associate the equivalence class of $(\Oh \quat{x},i)$. The condition
of the embedding being optimal is clear, and multiplication on the
right by $\Oh^\times$ give the same special point. We are left to
prove that right multiplication by $K^\times$ gives equivalent special
points, but this is clear since if $k \in K^\times$, conjugation by
$k$ acts trivially on $i$. 
\end{proof}

\subsection{Groups acting on $O$-points}
\begin{proposition}
    \label{quadraticaction}
\begin{enumerate}
  \item $\widehat{K}^\times$ acts on $\widehat{N}_O$ by right multiplication, i.e.
    $\widehat{N}_O\widehat{K}^\times\subseteq\widehat{N}_O$.
  \item \label{quadraticaction:2}
    $\widehat{O}^\times\backslash\widehat{K}^\times$ acts on
    $\Oh^\times\backslash\widehat{N}_O^\times$.
  \item The action in \eqref{quadraticaction:2} is free.
  \end{enumerate}
 \end{proposition}
 This action induces a free action of the group of $O$-classes on the set of $O$-points,
  and the space of orbits is $\widehat{R}^\times\backslash\widehat{N}_O/\widehat{K}^\times$.
  The \emph{canonical $O$-orbit} is the image of the map
  $\text{$O$-classes}\hookrightarrow\text{$K$-points}$.
  \begin{proof}[\proofname{} of Proposition~\ref{quadraticaction}]
\begin{enumerate}
\item indeed, if $a\in\widehat{K}^\times$, $\quat{x}\in\widehat{B}^\times$,
  then $O_{\quat{x}a}=K\cap
  a^{-1}\quat{x}^{-1}\widehat{R}\quat{x}a=a^{-1}(K\cap
  \quat{x}^{-1}\widehat{R}\quat{x})a=a^{-1}O_{\quat{x}} a=O_{\quat{x}}$.
\item if $a\in\widehat{O}^\times = K\cap \quat{x}^{-1}\Oh \quat{x}$ then
  $\quat{x}a\in\Oh\quat{x}$ and since $a$ is a unit, $\Oh \quat{x} =
  \Oh a\quat{x}$, which implies that $\widehat{O}^\times$ acts trivially
  on $\widehat{R}^\times\backslash\widehat{N}_O$.
\item if $\quat{x}a\in\widehat{R}\quat{x}$, then
$a\in\widehat{K}^\times\cap \quat{x}^{-1}\widehat{R}\quat{x} =
  \widehat{O}^\times$.
\end{enumerate}
\end{proof}

Recall that $N(\hOx) = \set{(\quat{x}_p) \st \quat{x}_p^{-1} \O_p
  \quat{x}_p = \O}$, modulo left multiplication by $\hOx$, corresponds
  to the group of bilateral $\O$-ideals. It clearly
acts on $\widehat{N}_O$ by left multiplication. Hence the group
$$G:=\hOx \backslash N(\hOx) / \QQ^\times \times \widehat{O}^\times \backslash
\widehat{K}^\times / K^\times$$ (i.e. the group of bilateral ideals modulo
$\Q^\times$-equivalence times the class group of $O$) acts on
the set of $O$-points.

\begin{proposition} If $\O$ is an Eichler order and $O$ is an order in $K$ of
  discriminant $D$, the action of $G$ on the classes of $O$-points is
  transitive. Furthermore, if $\gcd(\disc(\O),D)=1$ the action is
  free, while if $p$ is a prime that divides $\gcd(\disc(\O),D)$, the
  pair $(\id{p}_\O,\id{p}_O^{-1})$ acts trivially, where $\id{p}_{\O}$
  is the bilateral $\O$-ideal of norm $p$ and $\id{p}_O$ is the
  $O$-ideal of norm $p$ (in other words, the actions of $\id{p}_\O$
  and $\id{p}_O$ are the same.)
\label{transitive-action}
\end{proposition}
\begin{proof} That the action is transitive is a consequence of
  \cite[Proposition 3]{Eichler} in the case where $\O$ has square free
  discriminant, and by the work of \cite{Hijikata} for general
  levels. Once we know that the action is transitive, the second
  statement follows from \eqref{eichler-formula} (which gives the
  number of non-equivalent $O$-points) and from \cite[Proposition
    1]{Eichler} (which gives the number of bilateral ideals).
\end{proof}

Note that although in \cite{Eichler} only Eichler orders of square
free level are studied, most of the results proven there are also true
for all Eichler orders by the work of Hijikata
(\cite{Hijikata}). Furthermore, Proposition \ref{transitive-action}
holds also for orders of level $p^2M$ (that will be studied in the
next section), by \cite[Theorem 2.7 and Theorem 4.8]{Pizer2}.

\section{Orders of level $p^2$}\label{section:Ot}

Fix a prime $p>2$ and let $\H$ be the quaternion algebra over $\QQ$
which is ramified at $p$ and $\infty$.  Let $\Om$ be a maximal order in
$\H$ and let $\Ot$ be the unique order of index $p$ in $\Om$.
We note that
\[
\Ot = \set{\quat{x} \in \Om \st p \mid \normx\quat{x}},
\]
where $\normx\quat{x}:=(\trace\quat{x})^2-4\norm\quat{x}$ is the
discriminant of the characteristic polynomial of $\quat{x}$.

There is a quadratic character $\chi:\Ix(\Ot) \mapsto \{\pm1\}$, given
on $\id{a} \in \Ix(\Ot)$, by $\chi(\id{a}) =
\kro{\norm_{\id{a}}(\quat{x})}{p}$, where $x \in \id{a}$ is any element such
that $p \nmid \norm_{\id{a}}(\quat{x})$. Furthermore, the character
depends only on the equivalence class of $\id{a}$, i.e. it is a
character on $\I(\Ot)$ (see \cite[Proposition 5.1]{Pizer2}).

There is a $\HeckeRing_0$-equivariant bilinear map in $\M(\Ot)$ with
values in the space $M_2(p^2)$ of modular forms of weight $2$ and
level $p^2$ defined on the basis by
\[
\Hecke(\cls{a},\cls{b}) := \vartheta(\id{a}^{-1}\id{b}) =
\frac{1}{2} \sum_{x\in\id{a}^{-1}\id{b}}
q^{\norm\quat{x} / \normid{a^{-1}b}}
\]
This map induces a correspondence between eigenvectors in $\M(\Ot)$
and eigenforms of weight $2$ and level $p^2$. For an eigenform $f$ of
weight $2$ and level $p^2$ we denote by $\M(\Ot)^f$ the $f$-isotypical
component of $\M(\Ot)$, i.e. the eigenspace for the action of
$\HeckeRing_0$ with the same eigenvalues as $f$.
We have the following result due to Pizer (\cite[Theorem 8.2]{Pizer2}) : 
\[
  \dim \M(\Ot)^f = \begin{cases}
     1 & \text{if $f$ is an oldform,} \\
     1 & \text{if $f$ is the quadratic twist of a level $p$ form,}\\
     0 & \text{if $f$ is the non-quadratic twist of a level $p$ form,}\\
     2 & \text{if $f$ is not the twist of a level $p$ form.} \\
  \end{cases}
  \]
There is a natural inclusion $\M(\Om)\hookrightarrow\M(\Ot)$ which
is $\HeckeRing_0$-equivariant (see \cite[Proposition
1.14]{Pacetti-Tornaria}).

\begin{definition}
The space of old forms $\M(\Ot)^\old$ is the image of $\M(\Om)$ under 
the natural inclusion. Its orthogonal complement is denoted $\M(\Ot)^\new$ and
is called the new space.
\end{definition}

\begin{proposition}
The eigenvectors in $\M(\Ot)^\old$ correspond to eigenforms in
\linebreak
$M_2^\old(p^2)$,
and the eigenvectors in $\M(\Ot)^\new$
correspond to eigenforms in $M_2^\new(p^2)$.
\end{proposition}

\begin{proof} The first assertion is clear. For the second assertion,
    it is clear that if $f \in M_2^\new(p^2)$ and
   $\vec{e}_f \in \M(\Ot)^f$ then $\vec{e}_f$ is orthogonal to
   $\M(\Ot)^\old$. It might happen that there exists $\vec{e}
   \in \M(\Ot)^\new$ such that $f=\Hecke(\vec{e},\vec{e})$ is
   old.  But this is not the case, since if $f$ is an 
   old form, the $f$-isotypical component in $\M(\Ot)$ has dimension $1$. 
   Then, since $\M(\Om)^f$ is non-empty, we have $\M(\Om)^f = \M(\Ot)^f$.
\end{proof}

\subsection{Bilateral $\Ot$-ideals and its action on $\M(\Ot)$}

\begin{proposition} The group of bilateral $\Ot$-ideals modulo
  $\Q^\times$-equiva\-lence is isomorphic to the dihedral group $D_{p+1}$ of\/
  $2(p+1)$ elements. Furthermore the rotations correspond to
  bilateral $\Ot$-ideals of norm $1$ while the symmetries correspond
  to bilateral $\Ot$-ideals of norm $p$.
\label{prop:bilateral}
\end{proposition}

\begin{proof} See Proposition 9.26 of \cite{Pizer2}.
\end{proof}

By the remark in page~\pageref{remark:bilateral}, we are interested in
the operators corresponding to bilateral $\Ot$-ideals of order $2$.
By Proposition~\ref{prop:bilateral}, any norm $p$ bilateral
$\Ot$-ideal $\id{p}$ has order $2$.
There is also the one corresponding to the unique norm $1$ bilateral
$\Ot$-ideal $\id{m}_{\Ot}$ of order $2$,
which will be denoted by $\Wt$. This ideal commutes with any norm 
$p$ bilateral $\Ot$-ideal.

\begin{proposition}\label{prop:Wt}
    The operator $\Wt$ acts on $\M(\Ot)$ as the Atkin-Lehner
    involution $W_{p^2}$, i.e.
    \[
    \Hecke(\vec{u},\Wt\vec{v}) = \Hecke(\vec{u},\vec{v})|_{W_{p^2}}.
    \]
\end{proposition}
\begin{proof}
    In the basis of $\Ot$-ideal classes we have
    \[
    \Hecke(\cls{a},\Wt\cls{b}) = \vartheta(\id{a^{-1}}\,\id{m}_{\Ot}\,\id{b}) =
    \vartheta( (\id{a^{-1}}\,\id{m}_{\Ot}\,\id{a})\, \id{a^{-1}\,b} ).
    \]
    Clearly, $\id{a^{-1}}\,\id{m}_{\Ot}\,\id{a}$ is the unique norm 1
    bilateral $\Or(a)$-ideal of order $2$, and it thus follows
    from \cite[Theorem 9.20]{Pizer2} applied to the order $\Or(a)$
    that
    $\vartheta( (\id{a^{-1}}\id{m}_{\Ot}\id{a}) \id{a^{-1}\,b} )=
    \vartheta(\id{a^{-1}\,b})|_{W_{p^2}}$.
\end{proof}

\begin{corollary}
    Let $f$ be an eigenform of level $p$, and let $\e{f}$ be a
    non-zero eigenvector in $\M(\Ot)^f$. Then $\Hecke(\e{f},\e{f})$ is
    a non-zero multiple of
    \[
        f(z) - \epsilon_p\, p f(pz),
    \]
    where $\epsilon_p$ is the sign of the Atkin-Lehner involution at
    $p$, i.e. $f|_{W_p} = \epsilon_p f$.
\label{oldforms}
\end{corollary}
\begin{proof} Since $f$ is an old form, $\vec{e}_f \in
    \M(\Ot)^\old$.
    Locally, $\Wt$ amounts to left multiplication by the element of
    order $2$ in $\Ot_p^\times \backslash \Om_p^\times$, which is
    clearly trivial in $\M(\Ot)^\old$.
    Thus $\Wt\e{f}=\e{f}$, and by the
    Proposition
    \[
    \Hecke(\e{f},\e{f})|_{W_{p^2}} = \Hecke(\e{f},\e{f}).
    \]
    The statement follows from the fact that $W_{p^2}$ acts on the
    oldspace generated by $f(z)$ and $pf(pz)$ by the matrix
    $\smat{0&\epsilon_p\\\epsilon_p&0}$.
\end{proof}

\subsection{Multiplicity one}

Let $\O, \O'$ be orders and $\id{a}$ be a left $\O$-ideal. We define

\[
\Psi^{\O}_{\O'}(\id{a}) := \set{\id{b}\in\Ix(\O') \st \id{b}\subseteq\id{a}, \quad
  \norm\id{b}=\norm\id{a} }.
\]

If $\id{a} \in \Ix(\Om)$, then the subgroup of rotations acts
transitively (although not necessarily faithfully) on the set
$\Psi^{\Om}_{\Ot}(\id{a})$ (see Section $3.3$ of \cite{Pacetti-Villegas}).

Let $\rho$ denote a generator of the norm
$1$ bilateral $\Ot$-ideals, then we have $\chi(\rho) = -1$ since the bilateral
$\Ot$-ideals of norm $1$ are exactly $\Psi^{\Om}_{\Ot}(\Om)$ and half of these
ideals have positive character while the other half have negative
character. Also $W_{\rho^2}$ preserves characters.

\begin{lemma}\label{lemma:Wrho}
    If $\e{}\in\M(\Ot)$ is an eigenvector for $W_\rho$ with eigenvalue
    $\epsilon=\pm1$, then $\phi(\e{},\e{})$ is
    \begin{enumerate}
        \item an oldform, if $\epsilon=1$;
        \item the quadratic twist of a level $p$ form, if
            $\epsilon=-1$.
    \end{enumerate}
\end{lemma}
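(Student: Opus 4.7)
The plan is to reduce to a single Hecke-isotypical component. Since $W_\rho$ commutes with the Hecke algebra $\HeckeRing_0$ (being a bilateral-ideal operator), any $W_\rho$-eigenvector decomposes into Hecke eigenvectors with the same $W_\rho$-eigenvalue. I may therefore assume $\vec{e}\in\M(\Ot)^f$ for some Hecke eigenform $f\in M_2(p^2)$, in which case $\Hecke(\vec{e},\vec{e})$ is a scalar multiple of $f$, and I must show that if $W_\rho\vec{e}=\vec{e}$ then $f$ is old, while if $W_\rho\vec{e}=-\vec{e}$ then $f$ is a quadratic twist of a level $p$ form.

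By Pizer's dimension formula recalled at the beginning of this section, the only eigenforms with $\M(\Ot)^f\neq 0$ are oldforms (dim $1$), quadratic twists of level $p$ forms (dim $1$), and genuinely new forms at level $p^2$ (dim $2$). I will determine the $W_\rho$-action in each case and show that only the first two admit $\pm 1$ eigenvectors, with eigenvalues $+1$ and $-1$ respectively.

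For $f$ an oldform, $\M(\Ot)^f$ lies in the image of $\M(\Om)\hookrightarrow\M(\Ot)$. A local computation at $p$ shows that $W_\rho$ acts trivially on this image: the generator of the rotation subgroup of $\Ot_p^\times\backslash N(\Ot_p^\times)/\Q_p^\times$ can be chosen in $\Om_p^\times$, and hence acts trivially on the $\Om_p$-ideal classes underlying $\M(\Om)$. So $W_\rho=+1$ on the oldform component. For $f$ a quadratic twist, I use the character $\chi:\I(\Ot)\to\{\pm1\}$ with $\chi(\rho)=-1$, which decomposes $\M(\Ot)=\M^+\oplus\M^-$ into $\chi$-eigenspaces that are interchanged by $W_\rho$ (by multiplicativity of $\chi$ on bilateral ideals). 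Pizer's theta analysis at level $p^2$ identifies theta series $\vartheta(\id{c})$ with $\chi(\id{c})=+1$ as lying in the old space and those with $\chi(\id{c})=-1$ as lying in the quadratic twist space. Consequently, a $1$-dimensional $\M(\Ot)^f$ for $f$ a quadratic twist cannot lie entirely in $\M^+$ or entirely in $\M^-$; together with the $W_\rho$-swap property and one-dimensionality, this forces $W_\rho\vec{e}=\pm\vec{e}$, and the $+1$ option is excluded by the oldform analysis.

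The main obstacle is the genuinely new case: I must verify that the two-dimensional $\M(\Ot)^f$ carries no $W_\rho$-eigenvector with eigenvalue $\pm 1$. By Proposition~\ref{prop:Wt}, $\Wt=W_\rho^{(p+1)/2}$ acts as the Atkin--Lehner involution $W_{p^2}$ with eigenvalue $\pm 1$, so the eigenvalues of $W_\rho$ on $\M(\Ot)^f$ are $(p+1)$-th roots of unity $\zeta,\overline{\zeta}$ satisfying $\zeta^{(p+1)/2}=\pm 1$. The essential remaining step is to show $\zeta\neq\pm 1$; this comes from Pizer's local analysis in \cite[\S9]{Pizer2}, according to which the rotation subgroup of $N(\Ot_p^\times)$ acts on $\M(\Ot)^f$ for a genuinely new $f$ through a primitive character of order exactly $p+1$. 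This local representation-theoretic input is the essential technical content of the lemma.
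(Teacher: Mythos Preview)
Your approach differs substantially from the paper's and has real gaps. The paper does not do a case analysis on the type of $f$. Instead it works directly with the hypothesis $W_\rho\vec{e}=\epsilon\vec{e}$: since then $W_{\rho^2}\vec{e}=\vec{e}$, and $\rho^2$ acts transitively on each of the two halves $\{\cls{b}\in\Psi^{\Om}_{\Ot}(\id{a}):\chi(\cls{b})=\pm1\}/\!\sim$, the coefficients of $\vec{e}$ in the basis $\I(\Ot)$ must be constant on each half. Because $W_\rho$ swaps the two halves (as $\chi(\rho)=-1$), the two constants differ by the factor $\epsilon$. For $\epsilon=+1$ this exhibits $\vec{e}$ explicitly as the image of a vector in $\M(\Om)$; for $\epsilon=-1$ it exhibits $\vec{e}$ as the $\chi$-twist of such a vector, hence corresponding to the quadratic twist of a level $p$ form. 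No separate treatment of the genuinely new case is needed.

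Your argument, by contrast, tries to determine the $W_\rho$-spectrum on each isotypical type, and two of the three cases are problematic.

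For the genuinely new case, you assert that the rotation group acts on the two-dimensional $\M(\Ot)^f$ through a primitive character of order $p+1$, citing ``Pizer's local analysis''. This is the heart of the matter, and no such statement is among the results from \cite{Pizer2} that the paper invokes. In the paper's own logic this fact is essentially a \emph{consequence} of the lemma (it underlies the proof of the multiplicity-one Theorem~\ref{thm:multone}), so appealing to it here is circular within the paper's development.

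For the quadratic twist case, your claim that theta series $\vartheta(\id{c})$ with $\chi(\id{c})=+1$ lie in the old space and those with $\chi(\id{c})=-1$ in the quadratic-twist space is not correct as stated: a single $\vartheta(\id{c})$ is not an eigenform and has components across many isotypical pieces. More seriously, ``the $+1$ option is excluded by the oldform analysis'' is a non sequitur: you showed old $\Rightarrow$ eigenvalue $+1$, not the converse. What you actually need is that the $+1$-eigenspace of $W_\rho$ \emph{equals} $\M(\Ot)^{\old}$. This does hold---because the rotation orbits partition $\I(\Ot)$ as $\bigcup_{\cls{a}\in\I(\Om)}\Psi^{\Om}_{\Ot}(\id{a})/\!\sim$ with a transitive action on each block, so the invariants are exactly the block sums---but you have not argued it, and once you do, you are essentially carrying out the paper's direct computation anyway.
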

\begin{proof} Let $\e{f}$ be an element in $\M(\Ot)^f$. Then $\e{f}$
can be written as
\begin{multline*}
\sum_{\cls{a} \in \I(\Om)} \left(\sum_{\set{\cls{b} \in
    \Psi^{\Om}_{\Ot}(\id{a}) \st \chi(\cls{b}) = 1}/ \sim}
    n_{\cls{b}} \cls{b}\right.
    \\
    \left.
    + \sum_{\set{\cls{b} \in
    \Psi^{\Om}_{\Ot}(\id{a}) \st \chi(\cls{b}) = -1}/ \sim}
    n_{\cls{b}} \cls{b}\right)
\end{multline*}
Since $(W_{\rho})^2=W_{\rho^2} = 1$ and $\rho^ 2$ acts transitively on
both sets of the previous sum, we get that
\begin{align*}
\e{f} = & \sum_{\cls{a} \in \I(\Om)} n_{\cls{a}} \sum\nolimits_{\set{\cls{b} \in
    \Psi^{\Om}_{\Ot}(\id{a}) \st \chi(\cls{b}) = 1}/ \sim}
     \cls{b} + \\
     & \sum_{\cls{a} \in \I(\Om)} m_{\cls{a}} \sum\nolimits_{\set{\cls{b} \in
    \Psi^{\Om}_{\Ot}(\id{a}) \st \chi(\cls{b}) = -1}/ \sim}
     \cls{b}.
\end{align*}

Since $\chi(\rho) = -1$, $W_{\rho}$ permutes the set on the first sum
with the set on the second one; since $\e{f}$ is an eigenvector
of $W_{\rho}$ with eigenvalue $\epsilon =\pm1$, then $n_{\cls{a}} =
\epsilon m_{\cls{a}}$ for all ideals $\cls{a} \in \I(a)$. Clearly if
$\epsilon =1$, then $\e{f}$ is in $\M(\Ot)^\old$ and
$\e{f}$ is as in Corollary \ref{oldforms} i.e. it has
the same eigenvalues as a weight $2$ and level $p$ form.
On the other hand, if $\epsilon =-1$, then $\e{f}$ is the twist of an
eigenvector in $\M(\Ot)^\old$ (see \cite{Pa-To2})
therefore it corresponds to a quadratic twist of a level $p$ form, as
claimed.
\end{proof}

\begin{theorem}\label{thm:multone}
    Let $\id{p}$ be a norm $p$ bilateral $\Ot$-ideal. The algebra
    $\HeckeRing_{0,\id{p}}$ generated by
    $\HeckeRing_0$ and $W_{\id{p}}$ is
    commutative, its action on the space $\M(\Ot)$ is semisimple, and
    its eigenspaces have multiplicity one.
\end{theorem}
\begin{proof}
    By the remark in page~\pageref{remark:bilateral},
    we have that
    $\HeckeRing_{0,\id{p}}$ is commutative and its elements are
    self-adjoint with respect to the height pairing.

    We will now prove multiplicity one.
    Let $f$ be a modular form of weight $2$ and level $p^2$, and
    consider the eigenspace $\M(\Ot)^f$, which we know has dimension
    at most $2$. Since $W_{\id{p}}^2=1$, we will assume 
    $W_{\id{p}} = \pm1$ on $\M(\Ot)^f$, as otherwise the statement is
    clearly true.

  In this case, we have the identity (on $\M(\Ot)^f$)
  \[
    W_{\rho} = W_{\rho \id{p}} W_{\id{p}}
             = W_{\id{p}} W_{\rho \id{p}}
             = W_{\rho^{-1}}
  \]
(the middle equation comes from the fact that $W_{\id{p}} = \pm1$.)
Hence $W_{\rho}^2 = 1$ on $\M(\Ot)^f$, and there is an eigenvector
$\e{}\in\M(\Ot)^f$ for $W_\rho$ with  eigenvalue $\pm 1$.
By the Lemma, $\phi(\e{},\e{})$ is either an oldform or the quadratic
twist of a level $p$ form, whose eigenspace for $\HeckeRing_0$ is
already one-dimensional.
\end{proof}

\begin{corollary}
    The action of $W_\id{p}$ on $\M(\Ot)$ gives an orthogonal
    decomposition
    \[
    \M(\Ot) = \ker(W_\id{p}-1) \oplus \ker(W_\id{p}+1),
    \]
    and the action of $\HeckeRing_0$ on each of the components
    has multiplicity one.
\end{corollary}
\begin{proof}
    The claimed decomposition is clear because $W_\id{p}^2 = 1$ on
    $\M(\Ot)$, and the multiplicity one on each of the components
    follows from the theorem.
\end{proof}

\begin{remark}\
    \begin{enumerate}
        \item
    The action of $W_\id{p}$ on $\M(\Ot)^\old$ is given by
    left multiplication by $\id{p}$ acting on $\M(\Om)$.
    When $f$ is an eigenform of level $p$, this action on
    $\M(\Om)^f$ is known to be $-\epsilon_p(f)$,
    where $\epsilon_p(f)$ is the eigenvalue of the Atkin-Lehner
    involution $|_{W_p}$. 
    Hence, $\M(\Ot)^f\subseteq \ker(W_\id{p} +\epsilon_p(f))$.
\item
    When $f$ is the quadratic twist of an eigenform $g$ of level $p$,
    one can see that the operator $W_\id{p}=-\epsilon_p(g)
    \chi(\id{p})$ where $\chi(\id{p})$ is the character of the left
    $\Ot$-ideal $\id{p}$.  Basically, if $\Phi$ denotes the twisting
    operator in $\M(\Om)$, given on a basis element $\cls{b} \in
    \I(\Ot)$ by $\Phi(\cls{b}) = \chi(\id{b}) \cls{b}$, it amounts to
    see that $W_\id{p}\Phi=\chi(\id{p})\Phi W_\id{p}$.  \par Hence,
    $\M(\Ot)^f\subseteq \ker(W_\id{p} +\epsilon_p(g)\chi(\id{p}))$.
    \end{enumerate}
\end{remark}

Recall from~\cite{Pacetti-Tornaria} there is a $\TT_0$-linear map
\[
\Theta:\M(\Ot)\rightarrow M_{3/2}(4p,\charp),
\]
where $\charp(n):=\kro{p}{n}$ is the quadratic character modulo $p$ or
$4p$ (according to whether $p\equiv 1$ or $3 \pmod{4}$, respectively).
This map can be defined as
\[
\Theta(\cls{\Ot}) := \frac{1}{2} \sum_{x\in\Ot/\ZZ} q^{-\Delta{x}/p},
\]
and extended to all of $\M(\Ot)$ by conjugation, i.e.:
\[
\Theta(\cls{b}) := \Theta(\cls{\id{b}^{-1}\Ot\id{b}}).
\]
Note that $\Theta(\cls{\rho\id{b}})=\Theta(\cls{\id{b}})$, since
$\rho^{-1}\Ot\rho=\Ot$. In other words the diagram
\[
\xymatrix@R=0.3ex@C=3em{
\M(\Ot) \ar[dr]^{\Theta}
        \ar[dd]_{W_\rho} \\
         & M_{3/2}(4p,\charp) \\
        \M(\Ot) \ar[ru]_{\Theta}
}
\]
is commutative.
\begin{proposition}\label{prop:theta}
    If $\vec{e}\in\M(\Ot)$ is an eigenvector for $\TT_0$, and
    $\Theta(\vec{e})\neq 0$, then $\vec{e}$ is old.
\end{proposition}
\begin{proof}
    If $\Theta(\vec{e})\neq 0$, we may assume that
    $\vec{e}\in\ker\Theta^\perp$, since $\Theta$ is $\TT_0$-linear.

    Since $\Theta W_\rho = \Theta$, it follows that
    $\ker\Theta$ is invariant by $W_\rho$.
    Moreover, $\ker\Theta^\perp$ is also invariant by $W_\rho$,
    because $W_\rho$ is unitary.

    Hence, $W_\rho(\vec{e})\in\ker\Theta^\perp$, and the vector
    \[
        \vec{e} - W_\rho(\vec{e})
    \]
    is both in $\ker\Theta^\perp$ and in $\ker\Theta$.
    Thus $\vec{e}$ is an eigenvector for $W_\rho$ with eigenvalue $1$,
    and the result follows from Lemma~\ref{lemma:Wrho}.
\end{proof}

\subsection{Suborders and symmetries}

In view of Proposition~\ref{prop:theta}, the map $\Theta$ is trivial
on $\M(\Ot)^\new$. In order to obtain non-zero half-integral weight
modular forms corresponding to new vectors in $\M(\Ot)$, we need to
work with the orders of index $p$ in $\Ot$, which we call \emph{orders
  of level $p^2$} (see \cite{Pacetti-Tornaria}). It is known that they
play a very important role in the theory of Shimura correspondence for
level $p^2$.

\begin{remark}
As suggested by the referee, we want to clarify the terminology
here. In \cite{Pizer2}, the author uses the name ``orders of level
$p^2$'' for $\Ot$. In his context, this was the natural term to use,
since the main achievement was to construct bases of integral weight modular
forms and, using the trace formula, he proves that weight $2$ modular
forms of level $p^2$ appear in $\M(\Ot)$. However, since orders in
quaternion algebras are in correspondence with ternary quadratic
forms, it is more natural to index orders by the level of the
corresponding ternary form, which was the definition we used in
\cite{Pacetti-Tornaria} and we maintain in this article. We hope this
will make no confusion to the reader.
\end{remark}

We recall the following
\begin{proposition}\label{prop:suborderlevelp}
  Let $L \subset \Ot$ be a lattice such that $[\Ot:L] = p$. Then
  $L$ is an order if and only if $\Z + p\Om \subset L$.
\end{proposition}
\begin{proof}
    This is Proposition 2.2 of \cite{Pacetti-Tornaria}.
\end{proof}

\begin{proposition} If $\id{p}$ is a norm $p$ bilateral $\Ot$-ideal
  then $\ZZ+\id{p}$ is an order of index $p$ in $\Ot$.
  Conversely, if $\Op$ is an order with $[\Ot:\Op]=p$
  then $L=\set{x\in\Op\st p\mid\trace\quat{x}}$ is a
  norm $p$ bilateral $\Ot$-ideal.
\end{proposition}
\begin{proof} Since $\id{p}$ has norm $p$ in $\Ot$, by a local
    computation it follows that $\id{p}\subseteq\Ot$, and
    $[\Ot:\id{p}]=p^2$.
    Also it is easy to check that $p \Om \subset \id{p}$,
    hence $\ZZ +p\Om \subset \ZZ+\id{p}$ and $[\Ot:\ZZ+\id{p}]=p$.
    By Proposition~\ref{prop:suborderlevelp} it follows that $\ZZ+\id{p}$
    is an order.
    \par
    For the converse, note that for $\quat{x}\in\Ot$ we have
    $p\mid\trace\quat{x}$ if and only if $p\mid\norm\quat{x}$.
    Hence $L=\set{x\in\Op\st p\mid\norm\quat{x}}$, and it follows
    that $\Op L \subseteq L$. Therefore, the left order of $L$ is
    either $\Om$, $\Ot$ or $\Op$. From the fact that all the lattices
    with one of them as left order are locally principal (see \cite{Bre})
    and $[\Ot:L]=p^2$, it follows that $L$ is a left $\Ot$-ideal of
    norm $p$, hence bilateral.
\end{proof}

\begin{remark}
\label{rem:char}
Recall the definition of the character $\chi$ in an order $\Op$ of index $p$
in $\Ot$:
\[
   \chi(\Op) := \kro{ - \Delta x / p}{p},
\]
where $x\in\Op$ such that $p\parallel \Delta x$. This is well defined
by \cite[Lemma 2.3]{Pacetti-Tornaria}, and it's clear that
\[
   \chi(\ZZ+\id{p}) = \chi(\id{p}),
\]
i.e. the correspondence preserves the character.
\end{remark}

\subsection{Theta series of weight $3/2$} \label{thetacorr}
Using the correspondence given in the previous section we define,
for $\id{p}$ a bilateral $\Ot$-ideal of
norm $p$, a map
\[
\Theta_\id{p} : \M(\Ot) \rightarrow M_{3/2}(4p^2,\charp)
\]
by
\[
\Theta_\id{p}(\cls{\Ot}) :=
\Theta(\cls{\ZZ+\id{p}})
=
\frac{1}{2} \sum_{\substack{x\in\ZZ+\id{p}/\ZZ}}
    q^{-\Delta x/p}.
\]

This definition extends to all of $\M(\Ot)$ by conjugation,
namely:
\[
\Theta_\id{p}(\cls{b}) := \Theta_{\id{b}^{-1} \id{p}
\id{b}}(\cls{\id{b}^{-1}\Ot\id{b}}).
\]

In \cite{Pacetti-Tornaria}, for $\id{b} \in \I(\Ot)$ and $\Op=\ZZ +
\id{p}$, it is defined
\[
\Theta_{\Op}(\cls{b}) := \Theta(\Or(\id{c})),
\]
where $\id{c}$
is any left $\Op$-ideal with index $p$ in $\id{b}$.

\begin{lemma} If $\Op = \ZZ + \id{p}$ then $\Theta_{\id{p}} = \Theta_{\Op}$.
\end{lemma}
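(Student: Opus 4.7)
The plan is to verify $\Theta_{\id{p}} = \Theta_{\Op}$ on each basis element $\cls{\id{b}} \in \I(\Ot)$ of $\M(\Ot)$. By the conjugation extension, the left-hand side unwinds to
\[
\Theta_{\id{p}}(\cls{\id{b}})
 = \Theta_{\id{b}^{-1}\id{p}\id{b}}(\cls{\id{b}^{-1}\Ot\id{b}})
 = \Theta\bigl(\ZZ + \id{b}^{-1}\id{p}\id{b}\bigr),
\]
where $\id{b}^{-1}\id{p}\id{b}$ is a bilateral ideal in the order $\Or(\id{b}) = \id{b}^{-1}\Ot\id{b}$, and the last equality uses the base case $\Theta_{\id{q}}(\cls{\Or(\id{b})}) = \Theta(\ZZ + \id{q})$. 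It therefore suffices to exhibit a left $\Op$-ideal $\id{c} \subseteq \id{b}$ of index $p$ with $\Or(\id{c}) = \ZZ + \id{b}^{-1}\id{p}\id{b}$, for then $\Theta_\Op(\cls{\id{b}}) = \Theta(\Or(\id{c}))$ matches $\Theta_{\id{p}}(\cls{\id{b}})$.

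I would take $\id{c} := \Op\,\id{b}$. Since $\Op \subseteq \Ot$, one has $\Op\,\id{b} \subseteq \Ot\,\id{b} = \id{b}$, and $\Op \cdot \Op\,\id{b} \subseteq \Op\,\id{b}$, so $\id{c}$ is a left $\Op$-ideal. The remaining two claims---that $[\id{b}:\id{c}] = p$ and $\Or(\id{c}) = \ZZ + \id{b}^{-1}\id{p}\id{b}$---are local, and I would verify them prime by prime. At primes $\ell \neq p$, the bilateral ideal $\id{p}$ satisfies $\id{p}_\ell = \Ot_\ell$, hence $\Op_\ell = \Ot_\ell$ and $\id{c}_\ell = \id{b}_\ell$, making the claims trivial there.

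At $p$, use local principality to write $\id{b}_p = \Ot_p\,\quat{x}_p$ with $\quat{x}_p \in \Hpx$. Then $\id{c}_p = \Op_p\,\quat{x}_p$, so $[\id{b}_p : \id{c}_p] = [\Ot_p : \Op_p] = p$ by Proposition~\ref{prop:suborderlevelp}. The right order computes to
\[
\Or(\id{c}_p)
 = \quat{x}_p^{-1}\Op_p\,\quat{x}_p
 = \ZZ_p + \quat{x}_p^{-1}\id{p}_p\,\quat{x}_p.
\]
On the other hand, the bilaterality of $\id{p}$ gives
\[
(\id{b}^{-1}\id{p}\,\id{b})_p
 = \quat{x}_p^{-1}\Ot_p\,\id{p}_p\,\Ot_p\,\quat{x}_p
 = \quat{x}_p^{-1}\id{p}_p\,\quat{x}_p,
\]
which matches. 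By the local-global principle for lattices, this yields the global identity $\Or(\id{c}) = \ZZ + \id{b}^{-1}\id{p}\id{b}$ and completes the verification.

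There is no real obstacle; the argument is essentially bookkeeping, combining local principality of ideals with the bilaterality of $\id{p}$. The only care needed is to keep the three roles of ``ideal'' straight: $\id{b}$ as a left $\Ot$-ideal, $\id{c}$ as a left $\Op$-ideal, and $\id{p}$ as a bilateral $\Ot$-ideal.
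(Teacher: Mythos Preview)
Your overall strategy matches the paper's: reduce to showing that for a suitable left $\Op$-ideal $\id{c}\subseteq\id{b}$ of index $p$ one has $\Or(\id{c}) = \ZZ + \id{b}^{-1}\id{p}\,\id{b}$, and verify this locally. The local computation you carry out at $p$ is exactly the one in the paper.

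However, your global construction $\id{c} := \Op\,\id{b}$ does not do what you want. Since $1\in\Op$ and $\id{b}$ is a left $\Ot$-module with $\Op\subseteq\Ot$, the lattice product satisfies
\[
\id{b} \;\subseteq\; \Op\,\id{b} \;\subseteq\; \Ot\,\id{b} \;=\; \id{b},
\]
so in fact $\Op\,\id{b}=\id{b}$, not an index-$p$ sublattice. Concretely, at $p$ one has $(\Op\,\id{b})_p = \Op_p\,\Ot_p\,\quat{x}_p = \Ot_p\,\quat{x}_p$, not $\Op_p\,\quat{x}_p$ as you claim. (Already in the base case $\id{b}=\Ot$ your formula gives $\Op\,\Ot=\Ot$, whereas you need $\id{c}=\Op$.) The passage from $\id{c}_p=\Op_p\,\id{b}_p$ to $\id{c}_p=\Op_p\,\quat{x}_p$ silently drops a factor of $\Ot_p$.

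The fix is simply to define $\id{c}$ by its localizations: set $\id{c}_p := \Op_p\,\quat{x}_p$ and $\id{c}_\ell := \id{b}_\ell$ for $\ell\neq p$, and invoke the local-global principle for lattices to obtain a global $\id{c}$. This is precisely what the paper does, and with this correction the rest of your argument goes through verbatim.
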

\begin{proof} If $\id{b} \in \I(\Ot)$ and $\id{c}$ is any left
  $\Op$-ideal with index $p$ in $\id{b}$, we claim that $\Or(\id{c}) =
  \ZZ +\id{b}^{-1} \id{p} \id{b}$, which implies the assertion.

    To see it, we prove equality of both orders at the different
    completions. For primes $q \neq p$, the statement is trivial,
    since $\id{c}_q = \id{b}_q$ and
    $\Or(\id{c}_q) = \Or(\id{b}_q) = \id{b}_q^{-1} \id{b}_q = (\ZZ+
    \id{b}^{-1} \id{p} \id{b})_q$.
    
    At $p$, if $\idp{b} = \Ot_p x_p$, we can take $\idp{c} :=
    (\ZZ_p+\idp{p}) x_p$ (the global lattice $\id{c}$ with this local
    completions satisfies the hypothesis). Then $\Or(\idp{c}) = \ZZ_p + x_p^{-1}
    \idp{p} x_p = \ZZ_p + \idp{b}^{-1} \idp{p} \idp{b}$ as claimed.
\end{proof}

Note that
\[
\Theta_\id{p} (\cls{\rho\id{b}}) = \Theta_{\rho^{-1} \id{p} \rho}(\cls{b}) =
\Theta_{\id{p}\rho^2}(\cls{b}),
\]
hence for any rotation $\rho^k$ we have:
\[
\xymatrix@R=0.3ex@C=3em{
\M(\Ot) \ar[dr]^{\Theta_{\id{p}\rho^{2k}}}
        \ar[dd]_{W_{\rho^k}} \\
         & M_{3/2}(4p^2) \\
        \M(\Ot) \ar[ru]_{\Theta_\id{p}}
}
\]

\begin{proposition} The image of the map $\Theta_\id{p}(\M(\Om)^f)$ depends
    only on the character of $\id{p}$ for any eigenform $f$.
\end{proposition}

\begin{proof}
If $\id{p}$ and $\id{p}'$ are two bilateral ideals of norm $p$ with
the same character, then they differ by
the square of a rotation, say $\id{p} = \id{p}' \rho^{2k}$.
Since the operator $W_{\rho^k}$
commutes with the Hecke operators, the space $\M(\Ot)^f$ is
invariant under $W_{\rho^k}$, hence the statement follows from the previous
observation.
\end{proof}

Using the same argument for $\Wt$, since $\Wt^2=1$, we have the
commuting triangle
\[
\xymatrix@R=0.3ex@C=3em{
\M(\Ot) \ar[dr]^{\Theta_{\id{p}}}
        \ar[dd]_{\Wt} \\
         & M_{3/2}(4p^2) \\
        \M(\Ot) \ar[ru]_{\Theta_\id{p}}
}
\]
hence $\ker(\Wt+1)\subseteq \ker\Theta_\id{p}$; in other words,
eigenvectors corresponding to 
modular forms $f$ with $\epsilon(f)=-1$ have trivial image under
$\Theta_\id{p}$.

A similar computation shows that
\[
\Theta_\id{p} (\cls{pb}) = \Theta_{\id{p}^{-1} \id{p} \id{p}}(\cls{b}) =
\Theta_{\id{p}}(\cls{b}),
\]
thus
\[
\xymatrix@R=0.3ex@C=3em{
\M(\Ot) \ar[dr]^{\Theta_{\id{p}}}
\ar[dd]_{W_\id{p}} \\
         & M_{3/2}(4p^2) \\
        \M(\Ot) \ar[ru]_{\Theta_\id{p}}
}
\]
and again we have $\ker(W_\id{p}+1)\subseteq \ker\Theta_\id{p}$.

Summarizing, we have
\begin{proposition}
    In the irreducible components of $\M(\Ot)$ where $\Theta_\id{p}$ is
    non-zero, we have $W_\id{p}=\Wt = 1$.
    In particular, the image $\Theta_\id{p}(\M(\Om)^f)$, for an
    eigenform $f$, is at most $1$-dimensional.
\label{prop:Wp-action}
\end{proposition}

\section{Special points for level $p^2$}
\label{sec:specialpoints}

Following Section~\ref{sec:bilateral}, we fix $D<0$ an odd fundamental
discriminant.  We require $p\mid D$, since there are no special points
of discriminant $D$ for $\Ot$ otherwise.  Let $K$ be the imaginary
quadratic field of discriminant $D$, and $\Om_K$ its ring of
integers. Let $u_D$ be half the number of units in $\Om_K$, i.e. $u_D
= \frac{1}{2}\# \Om_K^\times$.  Write $D=p^\ast D_0$, where $p^\ast =
\kro{-1}{p}p$.

Fix a rational prime $q>0$ satisfying the conditions:
\begin{itemize}
\item $q \nmid 2D$.
\item $\kro{-q}{p}=-1$.
\item $q \equiv -1 \pmod{D_0}$.
\end{itemize}
By quadratic reciprocity, these conditions imply that $q$ is split in
$K$. We fix an ideal $\id{q}$ of $\Om_K$ of norm $q$, and
note that its genus $\gen{q}$ is the only element of the set
\[
Q = \set{ \gen{q} \st \normid{q}\equiv -p^2\pmod{D_0}}
\]
appearing in Theorem~\ref{thm:rankin} below.

Let $B = K + Kj$ with
\[
j^2 = -q ,
\]
and $j k = \bar{k} j$ for all $k\in K$, where $\bar{k}$ is the
complex conjugate of $k$.
\par
\begin{proposition}
$B$ is a quaternion algebra ramified precisely at $p$ and $\infty$.
\end{proposition}
\begin{proof} Clearly, $B$ is a quaternion algebra over $\QQ$, so
  we just need to find the set of
  ramified primes. In the basis $\set{1,\sqrt{D},j,\sqrt{D}\,j}$,
  the norm form is
  \[
     \Norm(x_0+x_1\sqrt{D}+x_2 j +x_3\sqrt{D}\,j)
     =
     x_0^2 - {D} x_1^2 + q x_2^2 - {D} q x_3^2.
  \]
  Since the norm form in $B$ is positive definite,
  $B$ ramifies at infinity.
  To check whether $B$ is ramified at a prime $l$ or not,
  we need to see if the norm form represents $0$ in $\Q_l$ for each
  prime $l$. Consider the different cases:
\begin{itemize}
\item If $l \nmid 2Dq$ then it is clear that $B$ is split at $l$,
    since the discriminant of the norm form is an $l$-adic unit in
    this case.
\item If $l = p$, the norm form represents zero if and only if
  $\kro{-q}{p} = 1$. The second condition on $q$ assures that this
  is not the case, hence $B$ ramifies at $p$.
\item If $l \mid D$ but $l \neq p$ then the norm form represents zero
  if and only if $\kro{-q}{l} = 1$, which is clearly the case since $-q
  \equiv 1 \pmod l$.
\item If $l = q$, the norm form represents zero if and only if
  $\kro{D}{q}=1$ which is the case by quadratic reciprocity. In fact,
  the conditions in $q$ imply $\varepsilon_{D_0}(-q)=+1$ and
  $\varepsilon_{p^\ast}(-q)=-1$, hence $\varepsilon_D(-q)=-1$.
  Since $D<0$ it follows that $\kro{D}{q}=1$ as claimed.
\end{itemize}
Since the number of ramified primes for a quaternion algebra is even,
we do not need to consider the prime $2$.
\end{proof}

Let $\id{D}_0$ be the ideal in $\Om_K$ of norm $D_0$, and $\id{p}_K$ the
ideal of $\Om_K$ of norm $p$. To simplify the notation, in this
section only we will omit the subscript $K$ writing $\id{p}=\id{p}_K$.
Define
\[
\Om := \set{\alpha + \beta j \st
  \alpha \in \id{D}_0^{-1},\;
  \beta \in \id{D}_0^{-1} \id{q}^{-1},\;
  \alpha-q \beta \in \Om_{K}},
\]
and
\[
\Ot := \set{\alpha + \beta j \st
  \alpha \in \id{D}_0^{-1},\;
  \beta \in \id{D}_0^{-1} \id{pq}^{-1},\;
  \alpha-q \beta \in \Om_{K}},
\]
This is consistent with the notation of the previous section by the
following theorem.

\begin{theorem} $\Om$ is a maximal order in $B$ and
  $\Ot$ is the unique order of index $p$ in $\Om$. 
\end{theorem}
\begin{proof} To prove that $\Om$ is an order, since $1 \in \Om$ and
    $\Om$ is closed under addition, we just need to check it is closed
    under multiplication.
    Let $a_1+b_1j,\, a_2+b_2j \in \Om$, then 
\[
(a_1+b_1j)(a_2+b_2j)=  (a_1a_2-qb_1\bar{b_2})+(a_1b_2+\bar{a_2}b_1)j.
\]
To prove that this is in $\Om$, we claim that it belongs to
  $\Om_l:= \Om \otimes \Z_l$ for all primes~$l$. Consider the cases:
\begin{itemize}
\item If $l \nmid D_0$ the claim is clear, since in this case
    \[
    \Om_l = (\Om_{K} + \id{q}^{-1} j) \otimes \ZZ_l,
    \]
    with $j^2 = -q$.
\item If $l\mid D_0$, then 
\[
a_1a_2-qb_1\bar{b_2}=a_1(a_2-qb_2) + qb_2(a_1-qb_1)+qb_1(qb_2-\bar{b_2}).
\]
The first two terms clearly belong to $\id{D}_0^{-1}\otimes\ZZ_l$. The last also
belongs to $\id{D}_0^{-1}\otimes\ZZ_l$ since $q \equiv -1 \pmod l$, and $b+\bar{b}
\in \Om_K\otimes\ZZ_l$ for all $b \in \id{D}_0^{-1}\otimes\ZZ_l$.
\par
Analogously, 
\[
(a_1b_2+\bar{a_2}b_1) = (a_1-b_1q)b_2+b_1(b_2q-a_2)+b_1(a_2+\bar{a_2}),
\]
and the same reasoning applies.
\par
Finally, the proof that
\[
(a_1a_2-qb_1\bar{b_2})-q(a_1b_2+\bar{a_2}b_1)j \in \Om_K\otimes\ZZ_l,
\]
follows from a similar computation.
\end{itemize}

The proof that $\Ot$ is an order is the same, except for $l=p$, where
$\Ot_p = (\Om_K+\id{p} j)\otimes\ZZ_p$ and the claim is clear.
Also this shows that $\Ot$ has index $p$ in $\Om$.

It remains to prove that $\Om$ is maximal,
or equivalently, that its reduced discriminant is $p$.
We compute the $l$-valuation of the
discriminant for each prime $l$:
\begin{itemize}
\item If $l \nmid Dq$ then $\Om_l=(\Om_k + \Om_Kj)\otimes \Z_l$, and
  the discriminant of the norm form in $\Om_l$ is an $l$-adic unit.
\item If $l \mid D_0$, $\Om_l \subset (\id{D}_0^{-1} +
    \id{D}_0^{-1} j)\otimes \ZZ_l$ with index $l$. The discriminant of
    the norm form in the latter is $l^{-2}$ since $l \nmid \norm{j}=q$
    hence the discriminant of the norm form in $\Om_l$ is an $l$-adic
    unit.
\item If $l=q$, $\Om_q = (\Om_K + \id{q}^{-1} j) \otimes \ZZ_q$. Since
    $\norm{j}=q$, the discriminant of the norm form in $\Om_q$ is a
    $q$-adic unit.
\item If $l=p$, $\Om_p = (\Om_K+\Om_Kj)\otimes \Z_p$ hence the
discriminant of the norm form in $\Om_p$ is $p^2$ since $p \mid D$.
\end{itemize}
\end{proof}

\subsection{Counting special points}

Recall that
 \[
\<\Ot\id{b},t_{m} \Ot\id{ab}> = \frac{1}{2} \#
\Hom(\Ot\id{b},\Ot\id{a}\id{b})[m],
\]
where
\[
\Hom(\Ot\id{b},\Ot\id{a}\id{b})[m]
:=
\set{u\in
\Hom(\Ot\id{b},\Ot\id{a}\id{b})
\st
\norm u = m \normid{a}}.
\]

Let $\calD$ be the set of ideals, 
\[
\calD:=\set{ \id{d} \st \normid{d} \mid D_0}.
\]

Note that the elements of $\calD$ are in one to one correspondence
with the elements of order $1$ or $2$ of the class group $\I(\Om_K)$,
since $D$ is odd and hence squarefree.

\begin{lemma} \label{lemma:homab}
Let $\id{a},\id{b}$ ideals of $\Om_K$ of norm prime to $D$,
and let $\id{d} \in \calD$. Then
\[
\begin{split}
\Hom(\tilde \Om \id{bd}, \tilde \Om \id{abd}) = 
\bigl\{\alpha + \beta j \; : \; & \alpha \in
\id{D}_0^{-1} \id{a},\,
\beta \in \id{D}_0^{-1} \id{pq}^{-1}\id{b}^{-1} \bar{\id{a}}\bar{\id{b}},\\
  & \alpha+q \beta \in (\Om_{K})_l \quad \forall \, l \mid \normid{d},\\
  & \alpha-q \beta \in (\Om_{K})_l \quad \forall \, l \mid D_0 \text{
  and } l \nmid \normid{d}
\bigr\}.
\end{split}
\]
\end{lemma}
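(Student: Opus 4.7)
The plan is to verify the identity prime by prime, invoking the Eichler local--global principle for lattices. Since both sides are lattices in $B$, it suffices to check the equality of their localizations at each finite rational prime $l$. Because every $\Om_K$-ideal is locally principal, we may write $\id{bd}_l = \beta_l (\Om_K)_l$ and $\id{abd}_l = \gamma_l (\Om_K)_l$ with $\beta_l, \gamma_l \in K_l^\times$, so that $(\Ot\id{bd})_l = \Ot_l \beta_l$ and $(\Ot\id{abd})_l = \Ot_l \gamma_l$. Then
\[
   \Hom(\Ot\id{bd},\Ot\id{abd})_l = \beta_l^{-1}\, \Ot_l\, \gamma_l,
\]
and a typical element, writing $v = A + Bj \in \Ot_l$, becomes
$\beta_l^{-1} v \gamma_l = (\gamma_l/\beta_l)\,A + (\bar\gamma_l/\beta_l)\,B\,j$
using $j k = \bar k j$ for $k\in K$. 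Thus $\alpha = (\gamma_l/\beta_l) A$ lies in the local component of $\id{D}_0^{-1}\id{a}$, while $\beta = (\bar\gamma_l/\beta_l) B$ lies in $\bar\alpha_{\id a,l}\cdot (\bar\beta_l/\beta_l) \cdot (\id{D}_0^{-1}\id{pq}^{-1})_l$, where $\alpha_{\id a,l} = \gamma_l/\beta_l$ generates $\id a_l$. The key global simplification is that for any $\id d \in \calD$, the ideal $\id d$ is a product of ramified primes, so $\bar{\id d} = \id d$; therefore $\bar{\id{bd}}/\id{bd} = \bar{\id b}/\id b$, which is exactly the factor that makes $\beta$ land in $\id{D}_0^{-1}\id{pq}^{-1}\id{b}^{-1}\bar{\id a}\bar{\id b}$.

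The easy primes are $l \nmid D_0 pq$, where $\Ot_l = (\Om_K + \Om_K j)_l$ and the only constraint is $\alpha \in (\id{D}_0^{-1}\id{a})_l$, $\beta \in (\id{D}_0^{-1}\id{pq}^{-1}\id{b}^{-1}\bar{\id a}\bar{\id b})_l$ with no extra condition, matching the claim (such $l$ never divides $\normid d$, so neither trailing condition is triggered). For $l=p$ and $l=q$, the explicit local descriptions $\Ot_p = (\Om_K + \id{p}\,j)\otimes\ZZ_p$ and $\Ot_q = \Om_q = (\Om_K + \id{q}^{-1} j)\otimes\ZZ_q$ give the correct $\beta$-lattice automatically, and again $l\nmid\normid d$ so no further condition arises.

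The main obstacle, and the only substantive case, is $l \mid D_0$. Here the ramified prime $\id l$ above $l$ satisfies $\bar{\id l} = \id l$, and we may pick a uniformizer $\pi_l$ with $\bar\pi_l = -\pi_l$. Locally, $\Ot_l = \{A + Bj : A \in (\id{D}_0^{-1})_l,\, B \in (\id{D}_0^{-1})_l,\, A - qB \in (\Om_K)_l\}$. If $l \nmid \normid d$, then $\id d_l = (\Om_K)_l$ and the local Hom inherits exactly the condition $\alpha - q\beta \in (\Om_K)_l$. If $l \mid \normid d$, then $\id d_l = \id l_l = \pi_l(\Om_K)_l$ appears on both sides, so the local Hom is $\pi_l^{-1}\Ot_l \pi_l$; conjugation by $\pi_l$ sends $A + Bj$ to $A + (\bar\pi_l/\pi_l) Bj = A - Bj$, which flips the sign of the $qB$ term in the defining condition and yields $\alpha + q\beta \in (\Om_K)_l$. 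This is precisely the case-split stated in the lemma. Assembling these local descriptions via the local--global principle gives the claimed global formula.
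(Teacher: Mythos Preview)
Your argument is correct and reaches the same conclusion by a route that is close in spirit to the paper's but organized differently. The paper writes a general element of $(\id{bd})^{-1}\Ot(\id{abd})$ globally as $b_0(\alpha+\beta j)ab_1$ with $\alpha+\beta j\in\Ot$, $b_0\in(\id{bd})^{-1}$, $a\in\id a$, $b_1\in\id{bd}$, expands it as $ab_0b_1\alpha+\bar a b_0\bar b_1\beta j$, and then checks the congruence conditions at each $l\mid D_0$ via algebraic identities such as
\[
   ab_0b_1\alpha - q\bar a b_0\bar b_1\beta
   = ab_0b_1(\alpha-q\beta) + q\beta b_0\bigl(ab_1-\overline{ab_1}\bigr),
\]
which explains the sign flip when $l\mid\normid d$ (because then $b_1$ carries a factor of the ramified uniformizer and $ab_1+\overline{ab_1}$ is the term that lands in $(\Om_K)_l$). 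Your conjugation-by-$\pi_l$ description, using $\bar\pi_l/\pi_l=-1$, is a cleaner conceptual account of the same phenomenon, and your local setup gives both inclusions at once, whereas the paper's global computation literally only exhibits the inclusion $\subseteq$.

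One point you should make explicit: at $l\mid D_0$ the generators of $\id a_l$ and $\id b_l$ are units, and after you substitute $\alpha=(\gamma_l/\beta_l)A$, $\beta=(\bar\gamma_l/\beta_l)B$ the condition $A-qB\in(\Om_K)_l$ becomes $\alpha - q\beta\cdot(\gamma_l/\bar\gamma_l)\in(\Om_K)_l$ rather than literally $\alpha-q\beta\in(\Om_K)_l$. The two are equivalent because for any unit $u\in(\Om_K)_l^\times$ one has $u/\bar u\equiv 1\pmod{\pi_l}$ (conjugation is trivial on the residue field $\F_l$) and $\beta\in\pi_l^{-1}(\Om_K)_l$, so $q\beta(\gamma_l/\bar\gamma_l-1)\in(\Om_K)_l$. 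This is precisely what the paper's term $q\beta b_0(ab_1-\overline{ab_1})$ encodes. Your phrase ``the local Hom is $\pi_l^{-1}\Ot_l\pi_l$'' suppresses these unit factors; the argument survives, but the step deserves a sentence.
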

\begin{proof} By definition, $\Hom(\Ot\id{bd},\Ot\id{abd}) =
    (\id{bd})^{-1} \Ot
\id{abd}$, i.e.
\begin{multline*}
\Hom(\Ot\id{bd},\Ot\id{abd}) =
\\
\set{b_0(\alpha + \beta j)ab_1 \st
  (\alpha + \beta j) \in \tilde \Om, b_0 \in
  (\id{bd})^{-1}, b_1 \in \id{bd} \text{ and } a \in \id{a}}.
\end{multline*}
\par
For $\alpha \in K$, $\alpha j = j \bar{\alpha}$ thus 
$b_0(\alpha + \beta j)ab_1= ab_0b_1 \alpha + \bar{a}b_0\bar{b_1}\beta
j$. The first term lies in $\id{D}_0^{-1}\id{a}$ while the second one
lies in $\id{D}_0^{-1} \id{q}^{-1} \id{b}^{-1} \bar{\id{a}}
\bar{\id{b}}$ since $\id{d}^{-1}\bar{\id{d}}=\Om_K$.
\par
We claim that $ab_0b_1\alpha
-q\bar{a}b_0\bar{b_1}\beta \in \Om_K \otimes \ZZ_l$ for all primes $l
\mid D_0/d$. Indeed
\[
ab_0b_1\alpha -q\bar{a}b_0\bar{b_1}\beta =
ab_0b_1(\alpha-q\beta)+q\beta b_0(ab_1-\overline{ab_1}),
\]
so the claim follows from the condition on the norms of $\id{a}$
and $\id{b}$, and the definition of $\Ot$.
\par
On the other hand, if $l\mid d$, then
\[
ab_0b_1\alpha +q\bar{a}b_0\bar{b_1}\beta =
ab_0b_1(\alpha-q\beta)+q\beta b_0(ab_1+\overline{ab_1}).
\]
The first term is in $\Om_K$ as before,
and since $b_1\in\id{bd}$, $l\mid ab_1+\overline{ab_1}$ so
the second term lies in $\Om_K\otimes\ZZ_l$ finishing the proof.
\end{proof}

We denote $\delta(n):=2^t$, where $t$ is the number
of prime factors of $\gcd(n,D_0)$. This is relevant because of the
following computation.

\begin{lemma} \label{lemma:sumd}
    With the same notation as above,
    let $\alpha \in \id{D}_0^{-1} \id{a}$
and
$\beta \in
\linebreak
\id{D}_0^{-1} \id{p}\id{q}^{-1}\id{b}^{-1} \bar{\id{a}}\bar{\id{b}}$
such that $\norm(\alpha+\beta j)\in\ZZ$,
and set $n=\frac{q\norm\beta\,\abs{D_0}}{p\norm{a}}\in\ZZ$.
Then
\[
\#\set{\id{d} \st \id{d} \in \calD ,\;
\alpha+\beta j\in
\Hom(\tilde \Om \id{bd}, \tilde \Om \id{abd}) } = \delta(n).
\]
\end{lemma}
\begin{proof}
Take a prime $l\mid D_0$.
When $l\mid n$, it follows that $\norm\beta\in\ZZ_l$, hence
$\beta\in\Om_K\otimes\ZZ_l$ (since $l$ is ramified).
Since $\norm(\alpha+\beta j)=\norm\alpha+q\norm\beta\in\ZZ$, it
follows that $\alpha\in\Om_K\otimes\ZZ_l$, and the condition at $l$ in
the previous lemma is trivially satisfied for all $\id{d}$.
\par
If $l\nmid n$, then neither $\alpha$ nor $\beta$ are in
$\Om_K\otimes\ZZ_l$, but $l\norm\alpha\in\ZZ_l$ and
$l\norm\beta\in\ZZ_l$.
We claim that this implies $\alpha+\bar\alpha\in\ZZ_l$ and
$\beta+\bar\beta\in\ZZ_l$.
In fact, $l\alpha\in\Om_K\otimes\ZZ_l$, hence
$D\mid\Delta(l\alpha)=(l\trace(\alpha))^2-4l(l\norm\alpha)$.
Since $l\mid D$, it follows that $(l\Tr\alpha)^2\in l\ZZ_l$,
thus $\Tr\alpha\in\ZZ_l$.

Then, the condition
$\norm\alpha+q\norm\beta\in\Om_K\otimes\ZZ_l$
is equivalent to $\alpha^2-q^2\beta^2\in\Om_K\otimes\ZZ_l$
(here we have used that $q\equiv -q^2\pmod{l}$).
Therefore, either $\alpha-q\beta\in\Om_K\otimes\ZZ_l$ or
$\alpha+q\beta\in\Om_K\otimes\ZZ_l$, but not both.
Therefore, the condition at $l$ in the previous lemma is satisfied
for exactly half of the possible $\id{d}$. Namely, when
$\alpha-q\beta\in\Om_K\otimes\ZZ_l$, the condition holds for all
$\id{d}$ with $l\nmid\normid{d}$, and when
$\alpha+q\beta\in\Om_K\otimes\ZZ_l$, the condition holds for all
$\id{d}$ with $l\mid\normid{d}$.

This implies the lemma, since the conditions on $\id{d}$ for each
$l\nmid n$ are independent.
\end{proof}

Let $\id{a}$ and $\id{b}$ be ideals of $\Om_K$ of norm prime to $D$ as
in the lemma, and consider the map 
\[
\Psi_\id{b}:\Hom(\Ot\id{b},\Ot\id{a}\id{b}) \rightarrow \Ix(\Om_K)\times \Ix(\Om_K)
\]
given by
\[
u = \alpha + \beta j \mapsto (\alpha \id{D}_0\id{a}^{-1},\beta
\id{D}_0\id{q} \id{p}^{-1} \id{b} \bar{\id{b}}^{-1} \bar{\id{a}}^{-1}).
\]
Note that $\Psi_\id{b}$ is well defined by Lemma~\ref{lemma:homab} (i.e. it maps to
a pair of \emph{integral} lattices). Furthermore, its
image is contained in
\[
\Lambda :=
\set{(\id{L}_1,\id{L}_2) \st
\id{L}_1 \sim \id{p}\id{a}^{-1},\;
\gen{L_2} = \gen{aq}
},
\]
where by abuse of notation we allow $\id{L}_1$ or
$\id{L}_2$ to be the zero ideal.

Moreover, if
$\Psi_\id{b}(\alpha+j\beta)=(\id{L}_1,\id{L}_2)$
then clearly
\[
\normid{L}_1 = \frac{\norm\alpha\,\abs{D_0}}{\normid{a}},
\qquad
\normid{L}_2 = \frac{\norm\beta\,\abs{D_0} q}{p\normid{a}},
\]
and so
\[
\frac{\normid{L}_1+p\normid{L}_2}{\abs{D_0}}
=
\frac{\norm\alpha+q\norm\beta}{\normid{a}}
=
\frac{\norm(\alpha+\beta j)}{\normid{a}}.
\]
Therefore, if we set
\[
   \Lambda[m] :=
\set{(\id{L}_1,\id{L}_2)\in\Lambda \st
\normid{L}_1+p\normid{L}_2 = m\abs{D_0}},
\]
the maps $\Psi_\id{b}$ restrict to
\[
\Psi_\id{b}:\Hom(\Ot\id{b},\Ot\id{a}\id{b})[m] \rightarrow \Lambda[m].
\]

\begin{lemma} \label{lemma:numL1L2}
    The number of pairs
    $(\id{L}_1,\id{L}_2)\in\Lambda[m]$ with $\normid{L}_2=n$
    is
    \[
\begin{cases}
        1 & \text{if $m=n=0$,} \\
        r_\id{a}(m) & \text{if $m>0$, $n=0$,} \\
        R_{\gen{aq}}(n) & \text{if $m\abs{D}=p^2n\neq 0$,} \\
        r_\id{a}(m\abs{D}-p^2n)
          R_{\gen{aq}}(n) & \text{if $m\abs{D}>p^2n\neq 0$.}
\end{cases}
    \]
\end{lemma}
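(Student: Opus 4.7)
The proof is a direct case analysis based on which of $\id{L}_1$ and $\id{L}_2$ vanishes. The constraint $\normid{L}_2=n$, together with the defining relation of $\Lambda[m]$, forces $\normid{L}_1 = m\abs{D_0}-pn$, which is nonnegative in all stated cases and vanishes exactly in the boundary cases. The trivial case $m=n=0$ yields the single pair $(0,0)$. When $\id{L}_1=0$ but $\id{L}_2\neq 0$, which corresponds to $m\abs{D}=p^2n\neq 0$, one simply counts integral ideals $\id{L}_2$ in the genus $\gen{aq}$ of norm $n$; this is $R_{\gen{aq}}(n)$ by definition.

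In the remaining two cases, the counts of $\id{L}_1$ and $\id{L}_2$ are independent, so it suffices to enumerate the $\id{L}_1$'s. The key input is that $\id{D}=(\sqrt{D})$ is principal in $\Om_K$ for any fundamental discriminant $D$, hence $[\id{D}_0]=[\id{p}]$ in the class group. For the generic case $n>0$, $m\abs{D}>p^2n$, the map $\id{L}_1\mapsto\id{p}\id{L}_1$ sends ideals of class $[\id{p}\id{a}^{-1}]$ and norm $m\abs{D_0}-pn$ to ideals of class $[\id{a}^{-1}]$ and norm $p(m\abs{D_0}-pn)=m\abs{D}-p^2n$. It is bijective because $p$ is ramified, so the divisibility $p\mid p(m\abs{D_0}-pn)$ forces $\id{p}\mid\id{c}$ for any integral $\id{c}$ of that norm, making the inverse $\id{c}\mapsto\id{c}\id{p}^{-1}$ integral. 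The resulting count is $r_\id{a}(m\abs{D}-p^2n)$.

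For the case $m>0$, $n=0$, with $\normid{L}_1=m\abs{D_0}$, I instead use the bijection $\id{L}_1\mapsto\id{L}_1\id{D}_0^{-1}$. Divisibility by $\id{D}_0$ is automatic: each ramified prime $\ell\mid D_0$ satisfies $\ell\mid m\abs{D_0}=\normid{L}_1$, so $\id{\ell}\mid\id{L}_1$. The image has class $[\id{p}\id{a}^{-1}\id{D}_0^{-1}]=[\id{a}^{-1}]$ (using $[\id{p}][\id{D}_0]=[\id{D}]=1$) and norm $m$, yielding count $r_\id{a}(m)$. This is consistent with the bijection of the previous paragraph, since multiplication by the principal ideal $\id{D}=(\sqrt{D})$ preserves class and scales norms by $\abs{D}$, giving $r_\id{a}(m)=r_\id{a}(m\abs{D})$. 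The only nontrivial verifications are the class and norm checks in these bijections, which rest only on the ramification of the primes dividing $D$ and the principality of $\id{D}$; I do not anticipate any serious obstacle.
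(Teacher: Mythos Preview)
Your proof is correct and follows essentially the same approach as the paper: count $\id{L}_1$ and $\id{L}_2$ independently, and rewrite the $\id{L}_1$-count in the class $[\id{p}\id{a}^{-1}]$ as $r_\id{a}$ via a bijection using the ramified prime(s) above $D$. The only cosmetic difference is that in the $n=0$ case the paper first applies the $\id{p}$-bijection to get $r_\id{a}(m\abs{D})$ and then invokes $(\sqrt{D})$ to reduce to $r_\id{a}(m)$, whereas you apply the $\id{D}_0$-bijection in one step; both rest on the same ramification and principality facts.
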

\begin{proof}
    Since $\normid{L}_2=n$, then the number of choices for this ideal
    is $R_{\gen{aq}}(n)$ if $n\neq0$, and $1$ otherwise.
    Similarly, $\normid{L}_1=m\abs{D_0}-pn$, and the number of choices
    for $\id{L}_1$ is either $r_{\id{pa}^{-1}}(m\abs{D_0}-pn)$ (for
    $m\abs{D}>p^2n$) or $1$ otherwise.
\par
    The result follows by noting that
    $r_{\id{pa}^{-1}}(m\abs{D_0}-pn)=r_\id{a}(m\abs{D}-p^2n)$, because
    $\id{p}$ is ramified; and when $n=0$, since $\sqrt{D}\in\Om_K$, this
    number is just $r_\id{a}(m\abs{D})=r_\id{a}(m)$.
\end{proof}

\begin{lemma} \label{lemma:numPsib}
    Let $(\id{L}_1,\id{L}_2)\in\Lambda[m]$, with
    $\normid{L}_2=n$.
    Then
    \[
    \sum_{\id{b} \in \I(\Om_K)}
    \# \Psi_{\id{b}}^{-1} (\id{L}_1,\id{L}_2)
    = 
    \begin{cases}
        h_D & \text{if $m=n=0$} \\
        2u_Dh_D & \text{if $m>0$, $n=0$,} \\
        2u_D\delta(n) & \text{if $m\abs{D}=p^2n\neq 0$,} \\
        4u_D^2\delta(n) & \text{if $m\abs{D}>p^2n\neq 0$.}
    \end{cases}
    \]
\end{lemma}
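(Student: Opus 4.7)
The plan is to evaluate in two ways the auxiliary double sum
\[
S := \sum_{\id{b}\in\I(\Om_K)}\sum_{\id{d}\in\calD}\#\Psi_{\id{bd}}^{-1}(\id{L}_1,\id{L}_2),
\]
where $\Psi_{\id{bd}}$ is defined analogously on $\Hom(\Ot\id{bd},\Ot\id{abd})$. Since every prime dividing $D_0$ is ramified in $K$, each $\id{d}\in\calD$ is self-conjugate and $\id{d}\bar{\id{d}}^{-1}=\Om_K$; hence the formula defining $\Psi_{\id{bd}}$ coincides with that of $\Psi_\id{b}$. Because $\id{b}\mapsto\id{b}\id{d}^{-1}$ is a bijection on $\I(\Om_K)$, the inner sum is independent of $\id{d}$, and since $|\calD|=2^{t_D-1}$ (where $t_D$ denotes the number of prime factors of $D$), we get
\[
S = 2^{t_D-1}\sum_{\id{b}\in\I(\Om_K)}\#\Psi_\id{b}^{-1}(\id{L}_1,\id{L}_2).
\]

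On the other hand, Lemma~\ref{lemma:sumd} yields $S=\delta(n)\cdot N$, where $N$ counts triples $(\id{b},\alpha,\beta)$ satisfying the lattice conditions, $\norm(\alpha+\beta j)=m\normid{a}$, and $\Psi_\id{b}(\alpha+\beta j)=(\id{L}_1,\id{L}_2)$, but with the sign conditions at primes dividing $D_0$ ignored. Here $\delta(0)=2^{t_D-1}$, and when $\alpha=0$ or $\beta=0$ the sign conditions become automatic, so Lemma~\ref{lemma:sumd} continues to apply in the degenerate cases. It remains to compute $N$. Using $[\id{D}]=1$ in $\I(\Om_K)$ (because $\sqrt{D}\in\Om_K$, whence $[\id{D}_0]=[\id{p}]$), the ideal $\id{L}_1\id{D}_0^{-1}\id{a}$ is principal precisely when $\id{L}_1\sim\id{p}\id{a}^{-1}$, which is built into the definition of $\Lambda$; it then admits $\#\Om_K^\times=2u_D$ generators $\alpha$ when $\id{L}_1\neq 0$, and only $\alpha=0$ otherwise. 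For $\beta$, the principality condition on $\id{L}_2\id{D}_0^{-1}\id{q}^{-1}\id{p}\bar{\id{a}}\id{b}^{-1}\bar{\id{b}}$ reduces to the class equation $[\id{b}]^2=[\id{L}_2\id{a}^{-1}\id{q}^{-1}]$; the hypothesis $\gen{L_2}=\gen{aq}$ places the right-hand side in the principal genus, so by genus theory this equation admits $|\I(\Om_K)[2]|=2^{t_D-1}$ solutions $[\id{b}]$, each giving $2u_D$ choices of $\beta$. When $\id{L}_2=0$, no class constraint is imposed and all $h_D$ classes contribute with $\beta=0$.

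Combining these counts in the four possible cases gives $N = h_D$, $2u_D h_D$, $2^{t_D} u_D$, or $4u_D^2\cdot 2^{t_D-1}$ respectively, and solving $2^{t_D-1}\cdot(\text{answer})=\delta(n)\cdot N$ recovers the four formulae of the lemma (using $\delta(0)=2^{t_D-1}$ in the first two cases). The main obstacle I anticipate is the class-group bookkeeping: verifying $[\id{D}_0]=[\id{p}]$, establishing the bijection $\calD\leftrightarrow\I(\Om_K)[2]$ (which uses that $D$ is squarefree and that the only relation among classes of ramified primes comes from $[\id{D}]=1$, together with the fact that $\id{p}\notin\calD$), and handling consistently the degenerate cases where $\id{L}_1$ or $\id{L}_2$ vanishes.
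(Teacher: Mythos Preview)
Your approach is essentially the same as the paper's, just repackaged: the paper restricts the sum over $\id{b}$ to those classes with $\id{b}^2\id{aq}\sim\id{L}_2$, identifies these with $\{\id{b}_0\id{d}:\id{d}\in\calD\}$ via the bijection $\calD\leftrightarrow\I(\Om_K)[2]$, and then applies Lemma~\ref{lemma:sumd} to the resulting sum over $\id{d}$; you instead form the full double sum $S$ and evaluate it two ways. The ingredients---Lemma~\ref{lemma:sumd}, the genus-theory count $|\I(\Om_K)[2]|=2^{t_D-1}$, and the unit count $2u_D$ for generators---are identical.

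There is, however, a gap in your first evaluation of $S$. The observation that ``the formula defining $\Psi_{\id{bd}}$ coincides with that of $\Psi_{\id{b}}$'' only says the \emph{output} map $(\alpha,\beta)\mapsto(\id{L}_1,\id{L}_2)$ is unchanged; the \emph{domain} $\Hom(\Ot\id{bd},\Ot\id{abd})$ differs from $\Hom(\Ot\id{b},\Ot\id{ab})$ precisely in the sign conditions of Lemma~\ref{lemma:homab}. So the bijection $\id{b}\mapsto\id{b}\id{d}^{-1}$ on $\I(\Om_K)$ does not by itself give $\sum_{\id{b}}\#\Psi_{\id{bd}}^{-1}=\sum_{\id{b}}\#\Psi_{\id{b}}^{-1}$. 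What you need is the class-invariance $\#\Psi_{\id{c}}^{-1}=\#\Psi_{\id{c}\gamma}^{-1}$ for $\gamma\in K^\times$, which the paper proves as its first step via the commutative conjugation diagram; once you have this, $\#\Psi_{\id{bd}}^{-1}$ depends only on $[\id{bd}]$ and your bijection argument goes through. This is an easy fix, but as written your first evaluation is unjustified.
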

\begin{proof}
    First note that $\#\Psi_\id{b}^{-1}(\id{L}_1,\id{L}_2)$ depends
    only on the class of $\id{b}$, since the following diagram
    commutes
\[
\xymatrix@R=0.3ex@C=3em{
\Hom(\Ot\id{b},\Ot\id{a}\id{b})
\ar[drr]^{\Psi_\id{b}}
\ar[dd]_{\text{conjugation by $\gamma$}} \\
         & & \Lambda \\
\Hom(\Ot\id{b}\gamma,\Ot\id{a}\id{b}\gamma)
\ar[urr]_{\Psi_{\id{b}\gamma}}
}
\]
for any $\gamma\in\Om_K$.
\par
Suppose $n\neq 0$.
If $\id{b}^2\id{aq}\nsim\id{L}_2$, then
$\Psi_\id{b}^{-1}(\id{L}_1,\id{L}_2)=\emptyset$.
Fix an ideal $\id{b}_0$ such that
$\id{b}_0^2\id{aq}\sim\id{L}_2$. Then the set of classes $\id{b}$ such that
$\id{b}^2\id{aq}\sim\id{L}_2$ equals $\set{\id{b}_0\id{d}\st
\id{d} \in \calD}$, hence
\[
    \sum_{\id{b} \in \I(\Om_K)}
    \# \Psi_{\id{b}}^{-1} (\id{L}_1,\id{L}_2)
    = 
    \sum_{\id{d}}
    \# \Psi_{\id{b}_0\id{d}}^{-1} (\id{L}_1,\id{L}_2).
\]
\par
Let $\alpha$ be a generator of $\id{L}_1\id{D}_0^{-1}\id{a}$ (which is
principal), and let $\beta$ be a generator of
$\id{L}_2\id{D}_0^{-1}\bar{\id{a}}
 \id{q}^{-1}\id{p}\id{b}_0^{-1}\bar{\id{b}_0}$ (which is also
 principal by the choice of $\id{b}_0$).
Given such a pair $(\alpha,\beta)$, we have
\[
\#\set{\id{d} \st
\alpha+\beta j\in \Psi_{\id{b}_0\id{d}}^{-1}(\id{L}_1,\id{L}_2) }
= \delta(n)
\]
by Lemma~\ref{lemma:sumd}.
\par
Now suppose $n=0$: in this case $\id{L}_2=0$ and it follows from
Lemma~\ref{lemma:sumd} that
$\#\Psi_{\id{b}}^{-1}(\id{L}_1,\id{L}_2)=1$ for all $\id{b}$, thus
\[
    \sum_{\id{b} \in \I(\Om_K)}
    \# \Psi_{\id{b}}^{-1} (\id{L}_1,\id{L}_2)
    =
    h_D.
\]
\par
The statement follows by counting the number of choices for $\alpha$
and $\beta$, which can be $2u_D$ or $1$ in each case (when the norm is
non-zero or zero, respectively).
\end{proof}

The following formula extends \cite[Proposition 10.8]{Gross} to level $p^2$.

\begin{theorem}\label{thm:heights}
Let $\id{a}$ be an ideal for $\Om_K$.
Then
\begin{multline}
\sum_{\id{b} \in \I(\Om_K)}\<\Ot\id{b},\t_{m} \Ot\id{ab}>
=
u_Dh_D r_{\id{a}}(m) \\
+ 2 u_D^2\sum_{n=1}^{\abs{D}m/p^2}
\delta(n)\, r_{\id{a}}(m|D|-p^2n) R_{\gen{aq}}(n),
\label{eq:formula:B}
\end{multline}
where $\delta(n):=2^t$, with $t$ being the number of prime factors of
$\gcd(n,D_0)$.
\end{theorem}
\begin{proof}
    We have
    \[
    \begin{split}
\sum_{\id{b} \in \I(\Om_K)}
\<\Ot\id{b},\t_{m} \Ot\id{ab}>
& =
\frac{1}{2}
\sum_{\id{b} \in \I(\Om_K)}
\#\Hom(\Ot\id{b},\Ot\id{ab})[m] \\
& =
\frac{1}{2}
\sum_{\id{b} \in \I(\Om_K)}
\sum_{(\id{L}_1,\id{L}_2)\in\Lambda[m]}
\#\Psi_\id{b}^{-1}(\id{L}_1,\id{L}_2).
\end{split}
    \]
In order to evaluate this we split the inner sum by the
norm of the ideal $\id{L}_2$. This is suitable to apply
Lemmas~\ref{lemma:numL1L2} and \ref{lemma:numPsib}, which together
give
\begin{multline*}
\frac{1}{2}
\sum_{\id{b} \in \I(\Om_K)}
\sum_{\substack{(\id{L}_1,\id{L}_2)\in\Lambda[m]\\\normid{L}_2=n}}
    \# \Psi_{\id{b}}^{-1} (\id{L}_1,\id{L}_2)
    \\
    =
    \begin{cases}
        u_D h_D r_\id{a}(m) & \text{if $n=0$,} \\
        2u_D^2\,\delta(n)\,r_\id{a}(m\abs{D}-p^2n)\,R_{\gen{aq}}(n) &
        \text{if $n\neq 0$.}
    \end{cases}
\end{multline*}
Note that the four different cases of the lemmas become just two cases
by use of the convention
$r_\id{a}(0)=\frac{1}{2u_D}$.
\par
The statement follows by adding this expression over $n\geq 0$.
\end{proof}

\subsection{Special points and central values of $L$-series}

Let $f$ be a cusp form in $S_2^\new(\Gamma_0(N))$.
In the appendix we recall the definition
of an $L$-series $L_{\sA}(f,s)$, a Rankin convolution of
$L(f,s)$ and a partial zeta function associated to
an ideal class $\sA$ of $\QQ(\sqrt{D})$.
These $L$-series are interesting due to their relation to the $L$-series of $f$ and its twists;
for instance we have the factorization
\[
\sum_{\sA} L_{\sA}(f,s) = L(f,s)\, L(f, D, s),
\]
where the sum is over all ideal classes of $\QQ(\sqrt{D})$.
The main result of the appendix is the following generalization of
\cite[(4.4) p.283]{GZ} regarding the central values of this $L$-series.
\begin{rankinthm}
Let $D<0$ be an odd fundamental discriminant, $\sA$ be an ideal in
$\Q[\sqrt{D}]$ and $f(z)$ be a cusp form in
$S_2^\new(\Gamma_0(N))$. Then, 
    \[
    L_{\sA}(f,1) = \frac{8\pi^2}{\sqrt{\abs{D}}} \<f, g_{\sA}>,
    \]
with $g_{\sA} = g_{\sA}^{(N)} = \sum b_{\sA}(m) q^m$,
where
\begin{multline}
  b_{\sA}(m) :=
  \frac{1-\varepsilon_D(N\eta)}{2}\cdot \frac{h(D)}{u_D} r_{\sA}(m)
  \\
  +
  \sum_{\gen{q}\in Q}
  \sum_{n=1}^{{\abs{D}m}/{N}}
  \delta(n) r_{\sA}(m\abs{D} - nN)
  R_{\gen{\sA\id{q}}}(n),
  \label{eq:formula:A}
\end{multline}
where the first sum is over the set of genera
\[
Q := \set{ \gen{q} \st \normid{q}\equiv -N\pmod{D_0}},
\]
and where $\delta(n):=2^t$, with $t$ the number of prime factors of
$\gcd(n,D_0)$.
\end{rankinthm}

Comparing the right hand side of~\eqref{eq:formula:B} in Theorem~\ref{thm:heights}
with the formula~\eqref{eq:formula:A} for the Fourier coefficients of the form
$g_{\sA}^{(p^2)}$ in Theorem~\ref{thm:rankin}, we obtain an explicit
formula for $g_{\sA}^{(p^2)}$ in terms of special points.

\begin{corollary} \label{cor:g_A}
On the above notation,
\begin{equation}
  g_{\sA}^{(p^2)} = 
    \frac{1}{2u_D^2}
\sum_{\id{b} \in \I(\Om_K)}
\Hecke(\Ot\id{b},\Ot\id{ab}),
\end{equation}
\end{corollary}
\begin{proof}
    In the formula for $b_{\sA}(m)$ of Theorem~\ref{thm:rankin}, 
    note that $\varepsilon_D(N\eta)=0$ (since $N=p^2$ and $p\mid D$
    in our case), and the set $Q$ consists of a unique element $\gen{q}$.
    Then the statement follows immediately from Theorem~\ref{thm:heights}.
\end{proof}

Assume now $f$ is a normalized eigenform in $S_2^\new(p^2)$.
Fix a character $\varphi$ of $\I(\Om_K)$, and define
\[
L_\varphi(f,s) := \sum_{\sA} \varphi(\sA) L_{\sA}(f,s).
\]
Consider $\vec{c}_{\varphi} = \sum_{\id{a}} \varphi^{-1}(\id{a}) \,{\Ot{\id{a}}} \in
\M(\Ot)$, and denote $\vec{c}_{f,\varphi}$ its projection to the $f$-isotypical
component of $\M(\Ot)$.
\begin{proposition}
\label{prop:centralvalue}
\[
    L_\varphi(f,1) = \frac{4\pi^2}{u_D^2}\cdot\frac{\<f,f>}{\sqrt{\abs{D}}}
    \, \<\vec{c}_{f,\varphi},\vec{c}_{f,\varphi}>.
\]
\end{proposition}
\begin{proof}
    The proof is similar to Proposition 11.2 of \cite{Gross},
    given Theorem~\ref{thm:rankin} and Corollary~\ref{cor:g_A}.
In our case, the $f$-isotypical component in $\M(\Ot)$ may have
dimension $2$; however, since $f$ is new, the $f$-isotypical component
of $S_2^\new(p^2)$ has dimension $1$, and the same reasoning
as given by Gross still applies.
\end{proof}

\begin{remark}
    If the $f$-isotypical component $\M(\Ot)^f$ is zero, the
    proposition implies that $L_\varphi(f,1)=0$ for all characters
    $\varphi$. Equivalently,
    \[
       L_{\sA}(f,1) = 0
    \]
    for all ideal classes $\sA$, and for all discriminants, whenever
    $f$ is a twist of a form of level $p$ by a non-quadratic
    character.
\end{remark}

\section{Proof of the Main Theorem}
\label{sec:main}

In this section we want to relate the central value of the $L$-series 
$L_\varphi(f,1)$ to coefficients of half-integral weight modular forms.
We will assume from now on that $\varphi=1_D$. This case of the Rankin
convolution $L$-series is related to our main formula because of the
factorization
\[
   L_{1_D}(f,s) = L(f, s) \, L(f, D, s).
\]

We will start with the case of odd discriminants D, which follows from
the results in Section~\ref{sec:specialpoints}.
The case of even discriminants could
be proved by a similar calculation, but we avoid the technical
difficulties of this case by resorting to a theorem of Waldspurger.
This step is done in the proof of Theorem~\ref{thm:main}; until then
we will assume that the discriminant D is odd, just so that we can use
the results in previous sections.

Let $\calP$ denote the set of norm $p$
bilateral $\Ot$-ideals. If $\id{b} \in \M(\Ot)$, $\id{p} \in \calP$,
and $D$ is a negative fundamental discriminant, with $D = -pd$, the coefficient 
of $q^{d}$ in the $q$-expansion of $\Theta_{\id{p}}(\cls{b})$ is 
\begin{equation}
\label{eq:splitting}
c_{d,\id{p}}(\id{b}) = \frac{1}{2} \#\set{x \in \ZZ +\id{b}^{-1} \id{p} \id{b}/\ZZ \st
\Delta x = D}.
\end{equation}
Let $\vec{c}_{d,\id{p}}:= \sum_{\cls{b}} c_{d,\id{p}}(\id{b}) \cls{b}$. Then
if $\vec{e} \in \M(\Ot)$, the coefficient of $q^d$ in the
$q$-expansion of $\Theta_{\id{p}}(\vec{e})$ is
$\<\vec{c}_{d,\id{p}},\vec{e}>$.

We want to give an ad\`elic description of this set.
Let $\omega_D\in \Om_K$ an element of discriminant $D$; adding an integer we
will assume that $\trace \omega_D\equiv 0 \pmod{p}$.
It's easy to check that then $\Om_K=\<1,\omega_D>$ and
$\id{p}_K=\<p,\omega_D>$.
Moreover,
\[
4\,\norm\omega_D
=(\trace\omega_D)^2 - D
\equiv - D \pmod{p^2}.
\]
Fix an embedding $i: K\hookrightarrow B$, and let
$\vec{x}=i(\omega_D)$.
Once such embedding is fixed, by
Proposition~\ref{prop:specialpointscorrespondence}
the special points of discriminant $D$
correspond to some elements in the double coset
$\Ix(\Ot)/ K^\times$.
Explicitly, let $x \in \ZZ + \id{b}^{-1}
\id{p} \id{b}$ of discriminant $D$; adding an integer we may assume that
$\trace x = \trace\vec{x}$.
Hence $\norm x=\norm\vec{x}$ as well, and so there exists $\alpha \in B^\times/K^\times$ with 
\[
\alpha^{-1} x \alpha = \vec{x}.
\]
The correspondence associates to $x$ the $\Om_K$-point
$\id{b}\alpha$. The condition $x \in \ZZ + \id{b}^{-1}\id{p}
\id{b}$ translates to the condition $\vec{x} \in
\ZZ + (\id{b}\alpha)^{-1} \id{p} (\id{b}\alpha)$.

For $\id{p} \in \calP$, define
\[
\calC_\id{p} := \set{\id{a} \in \Ix(\Ot) \st (\id{a},i) \text{
is a special point for $\Om_K$ and } \vec{x} \in \id{a}^{-1}\id{p}
\id{a}}.
\]
Recall from Section~\ref{sec:bilateral} that $(\id{a},i)$ is a special point for
$\Om_K$ if $\Or(\id{a}) \cap K = \Om_K$.

\begin{lemma} $\calC_\id{p}$ is closed under the action of
    $\widehat{K}^\times$ by right multiplication, i.e. $\calC_\id{p}
    \widehat{K}^\times = \calC_\id{p}$.
    \label{lemma:CP1}
\end{lemma}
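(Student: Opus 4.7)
The plan is to verify the two defining properties of $\calC_\id{p}$ pointwise: that $(\id{a}k, i)$ remains a special point, and that $\vec{x}$ still lies in the conjugated bilateral ideal.

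Working ad\`elically, a left $\Ot$-ideal $\id{a}$ corresponds to an orbit $\hOx\,(x_p) \in \hOx \backslash \hHx$, and right multiplication by $k=(k_p)\in\widehat{K}^\times$ sends it to $\hOx\,(x_p k_p)$. From the formula $\Or(\id{a}) = B \cap (x_p)^{-1} \hO (x_p)$, I obtain
\[
\Or(\id{a}k) = B \cap k^{-1}(x_p)^{-1} \hO (x_p) k = k^{-1}\,\Or(\id{a})\,k,
\]
where all products are taken inside $\widehat{B}$. Since $k\in\widehat{K}^\times$ commutes with every element of $K$, the conjugation $k^{-1}(\,\cdot\,)k$ acts trivially on any subset of $K$. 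Therefore
\[
\Or(\id{a}k) \cap K = k^{-1}\bigl(\Or(\id{a}) \cap K\bigr)k = k^{-1}\,\Om_K\,k = \Om_K,
\]
showing that $(\id{a}k, i)$ is again a special point for $\Om_K$.

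For the second condition, note that the bilateral ideal $\id{a}^{-1}\id{p}\id{a}$ is also a subset of $B$ whose definition translates ad\`elically into $(x_p)^{-1}\hat{\id{p}}(x_p)$, and hence
\[
(\id{a}k)^{-1}\,\id{p}\,(\id{a}k) = k^{-1}\bigl(\id{a}^{-1}\,\id{p}\,\id{a}\bigr)k.
\]
Now $\vec{x}=i(\omega_D) \in K$ commutes with $k \in \widehat{K}^\times$, so $k^{-1}\vec{x}k = \vec{x}$. Consequently $\vec{x} \in \id{a}^{-1}\id{p}\id{a}$ if and only if $\vec{x} = k^{-1}\vec{x}k \in k^{-1}(\id{a}^{-1}\id{p}\id{a})k = (\id{a}k)^{-1}\id{p}(\id{a}k)$. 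Combining the two verifications gives $\id{a}k \in \calC_\id{p}$, and the reverse inclusion follows by using $k^{-1}$ in place of $k$.

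The only real subtlety is being careful that the lattices $\id{a}^{-1}\id{p}\id{a}$ and the right orders transform correctly under ad\`elic right multiplication; once one adopts the local picture via $\hOx\backslash\hHx$, the argument reduces to the observation that $\widehat{K}^\times$ centralizes $K$ inside $\widehat{B}^\times$, so any subset of $K$ (including $\vec{x}$, the order $\Om_K$, and the identification $K \hookrightarrow B$) is invariant under conjugation by $k$. I expect no further obstacle.
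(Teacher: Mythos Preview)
Your proof is correct and follows essentially the same approach as the paper. The only difference is that the paper invokes Proposition~\ref{quadraticaction} to conclude that $\id{a}\hat\alpha$ is an $\Om_K$-point, whereas you reprove that step inline by computing $\Or(\id{a}k)\cap K$ directly; the second condition is handled identically in both, via the commutativity of $\widehat{K}$ with $\vec{x}\in K$.
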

\begin{proof}
    Let $\id{a} \in \calC_\id{p}$ and $\hat{\alpha} \in
    \widehat{K}^\times$. By Proposition~\ref{quadraticaction},
    $\id{a}\hat{\alpha}$ is an $\Om_K$-point. Since
    $\vec{x} \in \id{a}^{-1} \id{p} \id{a}$, 
    \[
    \hat{\alpha}^{-1} \vec{x} \hat{\alpha} \in (\id{a}\hat{\alpha})^{-1} 
    \id{p} (\id{a} \hat{\alpha}).
    \]
    But $\hat{\alpha}^{-1} \vec{x} \hat{\alpha} = \vec{x}$, because all elements
    are in $\widehat{K}$, which is commutative.
    Then $\id{a}\hat{\alpha} \in \calC_\id{p}$ as
    claimed.
\end{proof}

\begin{lemma} $\calC_\id{p}$ is closed under the action of $\Wt$
    by left multiplication.
    \label{lemma:CP2}
\end{lemma}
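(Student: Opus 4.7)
The plan is to verify the two defining conditions of $\calC_\id{p}$ for $\id{m}_{\Ot}\id{a}$, given that they hold for $\id{a}$.

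First I would handle the special point condition, namely that $\Or(\id{m}_{\Ot}\id{a})\cap K = \Om_K$. Since $\id{m}_{\Ot}$ is a bilateral $\Ot$-ideal, left multiplication by $\id{m}_{\Ot}$ preserves the right order of any left $\Ot$-ideal, so $\Or(\id{m}_{\Ot}\id{a}) = \Or(\id{a})$, and the condition follows directly from the hypothesis on $\id{a}$.

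Next I would verify the containment $\vec{x}\in (\id{m}_{\Ot}\id{a})^{-1}\,\id{p}\,(\id{m}_{\Ot}\id{a})$. Expanding, this right-hand side equals $\id{a}^{-1}\id{m}_{\Ot}^{-1}\,\id{p}\,\id{m}_{\Ot}\id{a}$. The essential observation, recorded in the paragraph preceding Proposition~\ref{prop:Wt}, is that $\id{m}_{\Ot}$ commutes (as an actual ideal, not merely modulo $\Q^\times$) with every norm $p$ bilateral $\Ot$-ideal. This reflects the fact that under the isomorphism of Proposition~\ref{prop:bilateral}, $\id{m}_{\Ot}$ corresponds to the unique central rotation of order $2$ in $D_{p+1}$ (which exists because $p+1$ is even); a scalar ambiguity $c\in\Q^\times$ between $\id{m}_{\Ot}\id{p}$ and $\id{p}\id{m}_{\Ot}$ must satisfy $c^2=1$ by comparing norms, and since lattices are stable under multiplication by $-1$, the two ideals are equal. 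Hence $\id{m}_{\Ot}^{-1}\id{p}\,\id{m}_{\Ot}=\id{p}$, so $(\id{m}_{\Ot}\id{a})^{-1}\id{p}\,(\id{m}_{\Ot}\id{a}) = \id{a}^{-1}\id{p}\id{a}$, which contains $\vec{x}$ by hypothesis.

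The main (and only) obstacle is the commutation of $\id{m}_{\Ot}$ with $\id{p}$ as ideals; however, this is stated in the text just before Proposition~\ref{prop:Wt}, so in the write-up I would simply invoke it. No further calculation is needed, and the lemma follows.
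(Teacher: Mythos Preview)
Your proof is correct and follows essentially the same approach as the paper's: both arguments reduce to the commutation of $\id{m}_{\Ot}$ with $\id{p}$, whence $(\id{m}_{\Ot}\id{a})^{-1}\id{p}(\id{m}_{\Ot}\id{a}) = \id{a}^{-1}\id{p}\id{a}$. You are actually a bit more careful than the paper in distinguishing commutation modulo $\Q^\times$ from commutation as genuine ideals and disposing of the possible sign, whereas the paper simply asserts the commutation; your handling of the special-point condition via preservation of right orders is also more explicit than the paper's citation of Section~\ref{sec:bilateral}.
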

\begin{proof} Recall from Section~\ref{sec:bilateral} that the bilateral ideals act on
    the $\Om_K$-points by left multiplication.
    Let $\id{a} \in \calC_\id{p}$;
    then $\vec{x} \in \id{a}^{-1}\id{p} \id{a}$, and
    we must show that $\Wt\id{a}\in\calC_\id{p}$.
    But $\Wt$ is the order two rotation of the group of
    bilateral $\Ot$-ideals (which is a dihedral group), hence it
    commutes with $\id{p}$ for any $\id{p} \in \calP$. Thus, 
\[
\vec{x} \in \id{a}^{-1}\id{p} \id{a} =
(\Wt\id{a})^{-1} \id{p} (\Wt \id{a}),
\]
hence $\Wt\id{a}\in\calC_\id{p}$ as claimed.
\end{proof}

    The last two lemmas imply that $\calC_\id{p}$ is closed under the
    action of $\{1,\Wt\} \times \I(\Om_K)$. 
Moreover, $c_{d,\id{p}}(\id{b}) = \frac{1}{2}\#
\set{\id{a} \in \calC_\id{p} / K^\times \st \id{a} \sim \id{b} }$.

Let $\O$ be an order in the same genus of $\Ot$ such that
$i(\Om_K)\subseteq\O$. In particular, $\vec{x} \in \O$.
Such an order $R$ exists because $p\mid D$.
Indeed, $i(\Om_K)$ is contained in some maximal order $\O_0$ of $B$; but the
condition $p\mid D$ implies that $p\mid\normx{x}$ for all $x\in
i(\Om_K)$, hence $i(\Om_K)$ is actually contained in the unique order
of index $p$ in $\O_0$.

\begin{lemma}
    \label{lemma:alpha}
    Assume that $\chi(\id{p}) = \kro{d}{p}$.
    Then, there is an $\hat\alpha\in\widehat{B}^\times$ such that
    \begin{enumerate}
        \item 
            $\hat\alpha^{-1}\Ot\hat\alpha = \O$,
        \item 
            $\hat\alpha^{-1}\id{p}\hat\alpha = \O \,i(\id{p}_K)$.
    \end{enumerate}
    Moreover, $\hat\alpha$ modulo multiplication by $\QQ^\times$,
    is unique up to left multiplication by the group generated by
    $\Wt$ and $W_\id{p}$.
\end{lemma}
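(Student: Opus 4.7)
Since $\Ot$ and $\O$ are in the same genus, pick $\hat\alpha_0\in\widehat{B}^\times$ with $\hat\alpha_0^{-1}\widehat{\Ot}\hat\alpha_0=\widehat{\O}$. Any $\hat\alpha$ satisfying (i) is then of the form $u\hat\alpha_0$ for some $u\in N(\widehat{\Ot}^\times)$, and modulo $\widehat{\Ot}^\times$ the element $u$ represents a bilateral $\Ot$-ideal. The strategy is to use this freedom to enforce (ii) and then read off the remaining ambiguity for the uniqueness statement.

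Rewriting condition (ii) as $u^{-1}\id{p}u = \hat\alpha_0\,\widehat{\O}\,i(\id{p}_K)\,\hat\alpha_0^{-1}$, the right hand side is a bilateral $\widehat{\Ot}$-ideal of norm $p$, and the first key step is to verify its character equals $\chi(\id{p})$. Taking the representative $\omega_D=\tfrac{p+\sqrt{D}}{2}$ of trace $p$, we have $i(\omega_D)\in i(\id{p}_K)\subseteq\widehat{\O}\,i(\id{p}_K)$ with $\norm(i(\omega_D))=p(p+d)/4$, so $p$ exactly divides this norm and the character of $\widehat{\O}\,i(\id{p}_K)$ is $\kro{(p+d)/4}{p}=\kro{d}{p}$; this is preserved under conjugation by $\hat\alpha_0$. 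By the hypothesis $\chi(\id{p})=\kro{d}{p}$, the two characters agree.

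The second key step exploits the dihedral structure $D_{p+1}$ of bilateral $\Ot$-ideals modulo $\QQ^\times$ from Proposition~\ref{prop:bilateral}. Applying the dihedral relation $\rho^{-1}\id{p}=\id{p}\rho$ iteratively, conjugation by $\rho^k$ sends $\id{p}$ to $\id{p}\rho^{2k}$. Since $\chi(\rho)=-1$, all of these share the character $\chi(\id{p})$, and because $p+1$ is even, this orbit has exactly $(p+1)/2$ elements. As this matches the total number of norm $p$ bilateral ideals with character $\chi(\id{p})$, the orbit is precisely that entire set; in particular the target bilateral ideal lies in it, producing the required $u$ and establishing existence.

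For uniqueness, the residual freedom in $\hat\alpha$ modulo $\QQ^\times$ corresponds to the stabilizer of $\id{p}$ under conjugation, i.e.\ the centralizer of a reflection in $D_{p+1}$. Since $p+1$ is even, this centralizer is the Klein four group $\{1,\id{p},\id{m}_{\Ot},\id{p}\id{m}_{\Ot}\}$, where $\id{m}_{\Ot}$ is the unique order $2$ rotation (associated to $\Wt$). Under the action on $\M(\Ot)$ these four elements act as $1$, $W_{\id{p}}$, $\Wt$, and $W_{\id{p}}\Wt$, so $\hat\alpha$ is unique up to left multiplication by the subgroup generated by $\Wt$ and $W_{\id{p}}$. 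The main obstacle is the character-matching computation, together with faithfully translating the dihedral conjugacy picture through the normalizer $N(\widehat{\Ot}^\times)$.
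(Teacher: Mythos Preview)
Your proof is correct and follows essentially the same route as the paper's: pick any $\hat\alpha_0$ conjugating $\Ot$ to $\O$, verify that $\O\,i(\id{p}_K)$ (transported back) is a norm $p$ bilateral $\Ot$-ideal with character $\kro{d}{p}$, then use the dihedral structure to conjugate $\id{p}$ onto it and read off the centralizer for uniqueness. One small omission: you assert that $\hat\alpha_0\,\widehat{\O}\,i(\id{p}_K)\,\hat\alpha_0^{-1}$ is a bilateral $\Ot$-ideal without justification, whereas the paper checks this locally (trivial away from $p$, and at $p$ it follows since the local order of index $p$ in the unique maximal order is unique). Your dihedral computations (orbit size $(p+1)/2$, centralizer $\{1,\id{m}_{\Ot},\id{p},\id{p}\id{m}_{\Ot}\}$) are more explicit than the paper's, which simply invokes conjugacy in the dihedral group.
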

\begin{proof}
    Since $\O$ is in the same genus as $\Ot$, there exists
    $\hat\alpha$ satisfying the first condition. Two such elements
    differ by a bilateral $\Ot$-ideal.
    \par
    We claim that the left $\O$-ideal $\O \,i(\id{p}_K)$ is a
    bilateral ideal. It is enough to prove that all the localizations
    are bilateral ideals. Since $\O \,i(\Om_K) = \O$, and $(\Om_K)_q =
    (\id{p}_K)_q$ at all primes $q \neq p$, the localizations at
    primes $q \neq p$ give bilateral ideals. At the ramified prime
    $p$, there is a unique maximal order, and a unique order of index
    $p$ in it, hence the localization at $p$ must also give a
    bilateral ideal.
    \par
    Recall that the group of bilateral 
    $\Ot$-ideals modulo $\Q^\times$ is a dihedral group.
    We claim that $\chi(\O\, i(\id{p}_K))=\kro{d}{p}$. Indeed,
    $\vec{x}\in\O\,i(\id{p}_K)$ and
    $\norm{\vec{x}}=\norm{\omega_D}\equiv \frac{-D}{4}\pmod{p^2}$,
    hence $\kro{\norm{\vec{x}}/p}{p}=\kro{-D/(4p)}{p}=\kro{d}{p}$.
    Then by assumption
    $\chi(\O\, i(\id{p}_K))=\chi(\id{p})$,
    hence $\hat\alpha^{-1}\id{p}\hat\alpha$ and $\O\,i(\id{p}_K)$,
    being bilateral,
    are in the same orbit by conjugation in this dihedral group.
    Thus the second condition holds by changing $\hat\alpha$
    accordingly.
    \par
    The last statement follows from the fact that the stabilizer of
    $\id{p}$ in the dihedral group is the group generated by $\Wt$ and
    $W_\id{p}$.
\end{proof}

Note that, for an ideal class $\cls{a}$ in $\I(\Om_K)$, the
$\Ot$-ideal $\hat\alpha\O \, i(\id{a})$ with $\hat\alpha$ as in the
lemma is an $\Om_K$-point, since
$\vec{x} \in i(\id{p}_K)$. Then we define
\[
   \calC_{D,\id{p}} :=
   \set{ \vphantom{\Wt} \hat\alpha\,\O\, i(\id{a}) \st \cls{a}\in\I(\Om_K)}
   \cup
   \set{ \Wt\hat\alpha\,\O\, i(\id{a}) \st \cls{a}\in\I(\Om_K)}.
\]

\begin{lemma}\
    \begin{enumerate}
        \item $W_\id{p}\,\calC_{D,\id{p}} = \calC_{D,\id{p}}$.
        \item $\calC_{D,\id{p}}$ is independent of the choice of $\hat\alpha$. 
    \end{enumerate}
\end{lemma}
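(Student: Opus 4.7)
The plan is to reduce both statements to the explicit intertwining identity $\id{p}\hat\alpha = \hat\alpha\,\O\,i(\id{p}_K)$ (which is just part (ii) of Lemma~\ref{lemma:alpha} rewritten) together with the fact, noted earlier in the text, that $\Wt$ commutes with every norm $p$ bilateral $\Ot$-ideal, and the tautology that multiplication by $\id{p}_K$ is a bijection on $\I(\Om_K)$.

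For part (1), I would compute directly on the two pieces of $\calC_{D,\id{p}}$. Using the intertwining identity,
\[
W_\id{p}\bigl(\hat\alpha\,\O\,i(\id{a})\bigr)
= \id{p}\hat\alpha\,\O\,i(\id{a})
= \hat\alpha\,\O\,i(\id{p}_K)\,\O\,i(\id{a})
= \hat\alpha\,\O\,i(\id{p}_K\id{a}),
\]
which again has the form $\hat\alpha\,\O\,i(\id{b})$ with $\id{b}=\id{p}_K\id{a}\in\Ix(\Om_K)$. Because $\id{p}\Wt=\Wt\id{p}$, the same computation gives
\[
W_\id{p}\bigl(\Wt\hat\alpha\,\O\,i(\id{a})\bigr)
= \Wt\,\id{p}\hat\alpha\,\O\,i(\id{a})
= \Wt\hat\alpha\,\O\,i(\id{p}_K\id{a}),
\]
so each of the two pieces is mapped into itself as $\id{a}$ varies.

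For part (2), I would use the last assertion of Lemma~\ref{lemma:alpha}: any second choice $\hat\alpha'$ differs from $\hat\alpha$, modulo $\QQ^\times$, by left multiplication by an element of the group generated by $\Wt$ and $W_\id{p}$. Since $\Wt^2$ and $W_\id{p}^2$ lie in $\QQ^\times$ and $W_\id{p}$ commutes with $\Wt$, this group consists, modulo scalars, of the four cosets represented by $1,\ \Wt,\ W_\id{p},\ \Wt W_\id{p}$; moreover scalars in $\QQ^\times\subseteq K^\times$ act trivially on $\Om_K$-points, so it suffices to handle these four cases. The case $\hat\alpha'=\Wt\hat\alpha$ simply swaps the two defining sets of $\calC_{D,\id{p}}$ (using $\Wt^2\in\QQ^\times$). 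The case $\hat\alpha'=W_\id{p}\hat\alpha$ is handled by the computation from part~(1): the set $\{\hat\alpha'\,\O\,i(\id{a})\}$ becomes $\{\hat\alpha\,\O\,i(\id{p}_K\id{a})\}$, and as $\id{a}$ ranges over $\I(\Om_K)$ so does $\id{p}_K\id{a}$, recovering the same piece of $\calC_{D,\id{p}}$; similarly for the $\Wt$-component. The remaining case $\hat\alpha'=\Wt W_\id{p}\hat\alpha$ follows by composing the two.

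The only nonroutine point is keeping track of the dichotomy between equality of $\Ot$-ideals on the nose and equality modulo $K^\times$: the identities $\Wt^2\equiv 1$ and $W_\id{p}^2\equiv 1$ hold only modulo $\QQ^\times$, but this suffices because $\calC_{D,\id{p}}$ is a set of $\Om_K$-points, and $\QQ^\times\subseteq K^\times$ acts trivially. Once that is kept straight, both assertions follow by the short calculations above with no further input.
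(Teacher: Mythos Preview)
Your proof is correct and follows essentially the same approach as the paper. The paper's own argument is terser: for (1) it writes the single line $\id{p}\hat\alpha\,\O\,i(\id{a}) = \hat\alpha\,\O\,i(\id{p}_K)\,i(\id{a}) = \hat\alpha\,\O\,i(\id{p}_K\id{a})$ and notes that multiplication by $\id{p}_K$ permutes $\I(\Om_K)$ (leaving the $\Wt$-piece implicit), and for (2) it simply observes $\Wt\,\calC_{D,\id{p}}=\calC_{D,\id{p}}$ and invokes the uniqueness clause of Lemma~\ref{lemma:alpha}; your version spells out the same reasoning more fully, including the explicit handling of the four cosets and the observation that the relevant identities need only hold modulo $\QQ^\times\subseteq K^\times$.
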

\begin{proof}
The first statement follows from
$$\id{p}\hat\alpha R\, i(\id{a}) =
\hat\alpha\O\, i(\id{p}_K)\, i(\id{a}) =
\hat\alpha\O\, i(\id{p}_K\,\id{a}),$$
since multiplication by $\id{p}_K$ is a permutation of $\I(\Om_K)$.
For the second statement note that clearly
$\Wt\,\calC_{D,\id{p}}=\calC_{D,\id{p}}$, and use the final statement of
the previous lemma.
\end{proof}

\begin{theorem} With the previous notation,
\[
\calC_{D,\id{p}} = \calC_\id{p} /K^\times.
\]
    \label{thm:specialpointsrelation}
    \end{theorem}
    \begin{proof}

        Let $\hat\alpha$ be as in Lemma~\ref{lemma:alpha},
        so that $\hat\alpha\O\in \calC_{D,\id{p}}$.
        Then $\hat\alpha\O=\Ot\hat\alpha$ by the first condition in the
        lemma, and the second condition implies that it is in
        $\calC_\id{p}$.

        Then, by Lemma~\ref{lemma:CP1} and Lemma~\ref{lemma:CP2},
        $\calC_{D,\id{p}}\subseteq \calC_{\id{p}} / K^\times$.
       The
       theorem follows from the fact that $
       \calC_\id{p}/K^\times$ has exactly $2 h_D$ elements, where
       $h_D$ is the class number of $\Om_K$. This will be proved in
       the following lemmas.

    \begin{lemma} We have
        \label{lemma:specialpointsdisjointunion}
        \[
        \{\Om_K \text{-points for }\Ot\} = \bigcup_{\id{p} \in \calP} \calC_\id{p}/K^\times.
        \]
        
        Furthermore, the union is disjoint.
    \end{lemma}
    
    \begin{proof} The fact that the set of $\Om_K$-points are the
        union over the norm $p$ bilateral $\Ot$-ideals follows
  from the fact that there is a bijection between bilateral ideals of
  norm $p$ and orders of index $p$ in $\Ot$ (given by $\id{p} \leftrightarrow
  \ZZ +\id{p}$) and the union of such orders is $\Ot$. The second claim
  comes from the fact that the intersection of two different index $p$ 
  suborders of $\Ot$ gives $\ZZ + p \Om$, hence the discriminant of such
  elements is divisible by $p^2$.
\end{proof}

\begin{lemma} If $\chi(\id{p}) \neq \kro{d}{p}$ then
  $\calC_{\id{p}} = \emptyset$. 
  \label{lemma:specialpointemptyset}
\end{lemma}
\begin{proof} If $\calC_\id{p} \neq \emptyset$, there exists
    $\hat{\alpha}$ such that $\vec{x} \in \hat{\alpha}^{-1} \id{p}
    \hat{\alpha}$. By definition, $\chi(\hat{\alpha}^{-1}\id{p}
    \hat{\alpha})$ is computed by choosing an element in this ideal of
    norm divisible by $p$ but not by $p^2$. Since $\vec{x}$ is such an
    element, 
    \[
    \chi(\id{p})
    = \chi(\hat{\alpha}^{-1}\id{p} \hat{\alpha})
    = \kro{\norm{\vec{x}}/p}{p}
    = \kro{-D/p}{p}
    = \kro{d}{p}
    .
    \]
\end{proof}
\begin{proposition} If $\id{p}_1,\id{p}_2 \in \calP$ with $\chi (\id{p}_1) =
\chi(\id{p}_2)$ then
$\calC_{\id{p}_1}$ and $\calC_{\id{p}_2}$ have the same number of elements.
\label{prop:specialpointsamecharacter}
\end{proposition}

\begin{proof} If $\chi (\id{p}_1) = \chi(\id{p}_2)$ there exists a rotation $\id{o}$ in
  the group of bilateral ideals such that $\id{o}^{-1} \id{p}_2 \id{o} =
  \id{p}_1$. Then $\id{o} \calC_{\id{p}_1} = \calC_{\id{p}_2}$, where the
  action is given by left multiplication.\hbox to 2ex{}
\end{proof}

\begin{lemma} The number of $\Om_K$-points for $\Om$ is $h_D$.
\label{lemma:numberofspecialpoints}
\end{lemma}

\begin{proof} The set of $\Om_K$-points is an homogeneous space for
  $\Bil(\Om)/\Q^\times \times \I(\Om_K)$. The results follows from
  the fact that $\Bil(\Om)/\Q^\times$ has two elements, and the norm
  $p$ ideal in $\Bil(\Om)$  acts as the ideal $\id{p}_K\in\Ix(\Om_K)$.
\end{proof}

\begin{proposition} The number of $\Om_K$-points for $\Ot$ is $(p+1) h_D$.
\end{proposition}
\begin{proof}
See Theorem 2.7 and Theorem 4.8 of \cite{Pizer2}.
\end{proof}

The last proposition asserts that the total number of $\Om_K$-points for
$\Ot$ is $(p+1) h_D$. By Lemma~\ref{lemma:specialpointsdisjointunion},
this equals $\sum_{\id{p} \in \calP} \#
\calC_{\id{p}}/K^\times$. By Lemma~\ref{lemma:specialpointemptyset}, half of this
numbers are zero and by
Proposition~\ref{prop:specialpointsamecharacter} all the non-empty
sets have the same number of elements. This implies that
\[
\#\calC_\id{p}/K^\times =2h_D,
\]
which finishes the proof of Theorem~\ref{thm:specialpointsrelation}.
\end{proof}

In particular, $2\, \vec{c}_{d,\id{p}}=\sum_{\id{b} \in \calC_{D,\id{p}}}
\cls{b}$ in $\M(\Ot)$. Since $W_{\id{p}}$ commutes with $\HeckeRing_0$ and acts trivially on
$\vec{c}_{d,\id{p}}$, then the projection $\vec{c}_{f,\id{p}}$ to the
$f$-isotypical component is a vector on which $W_\id{p}$ acts trivially.

Assume there is a non-zero eigenvector $\vec{e}_f\in\M(\Ot)^f$ with
$W_\id{p}\,\vec{e}_f = \vec{e}_f$, as otherwise $\vec{c}_{f,\id{p}}=0$ and
$L(f,1)=0$ by Proposition~\ref{prop:centralvalue}.  When
$\dim\M(\Ot)^f=2$, this vector always exists by
Theorem~\ref{thm:multone} (multiplicity one).
In any case $\vec{e}_f$ is unique up to a constant, and therefore
\[
\vec{c}_{f,\id{p}} = \frac{\<\vec{c}_{d,\id{p}},\vec{e}_f>}
{\<\vec{e}_f,\vec{e}_f>}\,
\vec{e}_f,
\]
and
\begin{equation}
\label{SP}
\<\vec{c}_{f,\id{p}},\vec{c}_{f,\id{p}}> =
\frac{\abs{\<\vec{c}_{d,\id{p}},\vec{e}_f>}^2}
{\<\vec{e}_f,\vec{e}_f>}.
\end{equation}

\begin{theorem}[Main Theorem] 
\label{thm:main}
Let $f$ be a new eigenform of weight $2$, level $p^2$ with $p>2$ an odd prime.
Fix a norm $p$ bilateral $\Ot$-ideal $\id{p}$,
and let $\vec{e}_f$ be an eigenvector in
the $f$-isotypical component of
$\M(\Ot)$ such that $W_{\id{p}}(\vec{e}_f) = \vec{e}_f$.
\par
If $d$ is an integer such that $D=-pd<0$ is a fundamental discriminant, 
and such that $\kro{d}{p}=\chi(\id{p})$, then
\[
L(f,1)\,L(f,D,1)
=
4 \pi^2\,\frac{\<f,f>}{\<\vec{e}_f,\vec{e}_f>}
                        \,\frac{c_d^2}{\sqrt{pd}},
\]
where the $c_d$ are the Fourier coefficients of
$\Theta_{\id{p}}(\vec{e}_f) = \sum_{d \ge 1}c_d\,q^d$.
\end{theorem}

\begin{proof} $\bullet$ \emph{First case: odd discriminants.}

By Proposition~\ref{prop:centralvalue},
\[
    L(f,1)\, L(f,D,1) = L_{1_D}(f,1) = \frac{4\pi^2}{u_D^2}\cdot\frac{\<f,f>}{\sqrt{\abs{D}}}
    \, \<\vec{c}_{f},\vec{c}_{f}>,
\]
where $\vec{c}_f$ is the projection of $\vec{c}_1=\sum_{\id{a}} R
\id{a}$ to the $f$-isotypical component in $\M(\Ot)$. Since
$W_{\id{p}}$ acts trivially in $\vec{c}_1$, it acts trivially in its
projection $\vec{c}_f$ so we just need to project it to the eigenspace
where $W_{\id{p}}$ acts trivially.

Since $2\, \vec{c}_{d,\id{p}}=\sum_{\id{b} \in \calC_{D,\id{p}}}
\cls{b}$, from the definition of $\calC_{D,\id{p}}$ we get
\[
\<\vec{c}_{d,\id{p}},\vec{e}_f> = \frac{1}{2}\left(\<\vec{c}_1,\vec{e}_f> + \<\Wt \vec{c}_1,\vec{e}_f>\right).
\]

If the operator $\Wt$ acts as $-1$ in
$\vec{e}_f$, then $c_d = \<\vec{c}_{d,\id{p}},\vec{e}_f>=0$ and the
left hand side of the main formula also vanishes (since the sign of
the functional equation is
$-1$ in this case). Otherwise, $\Wt$ acts trivially in $\vec{e}_f$ and
so $\<\vec{c}_{d,\id{p}},\vec{e}_f>=\<\vec{c}_1,\vec{e}_f>$, hence
$\vec{c}_{f,\id{p}}=\vec{c}_f$.
  Therefore
\[
\<\vec{c}_f,\vec{c}_f> =
\<\vec{c}_{f,\id{p}},\vec{c}_{f,\id{p}}> =
\frac{\abs{\<\vec{c}_{d,\id{p}},\vec{e}_f>}^2}
{\<\vec{e}_f,\vec{e}_f>} = \frac{{c_d}^2}{\<\vec{e}_f,\vec{e}_f>}.
\]

\noindent $\bullet$ \emph{Second case: even discriminants.} The
modular form $\Theta_\id{p}(\vec{e}_f)$ lies in the space $S_{3/2}(4p^2,
\charp)$ and maps to $f$ via the Shimura correspondence. In this
situation, Corollary $2$ of \cite{Waldspurger} states:

\begin{theorem}[Waldspurger] Let $pd_1, pd_2 \in \NN$ be square-free
  integers, and suppose that $d_1/d_2 \in {\Q_q^\times}^2$ for $q=p$
  and $q=2$. Then one has the equality
\[
c_{d_1}^2 \, L(f,-pd_2^\dag,1)\, \sqrt{pd_2} = c_{d_2}^2 \,
L(f,-pd_1^\dag,1)\, \sqrt{pd_1},
\]
where $-pd_i^\dag$ is the discriminant of the quadratic field
$\Q[\sqrt{-pd_i}]$. 
\end{theorem}

For the particular case of $\Theta_\id{p}(\vec{e}_f)$, one
actually has

\begin{proposition} Let $-p d_1$ and $-pd_2$ be fundamental discriminants and
  suppose that $d_1/d_2 \in {\Q_p^\times}^2$. Then one has the equality
\label{prop:walds}
\[
c_{d_1}^2 \, L(f,-pd_1,1)\, \sqrt{pd_2} = c_{d_2}^2 \,
L(f,-pd_2,1)\, \sqrt{pd_1}.
\]
\end{proposition}
We show that the case of even discriminants of the Main Theorem follows from
Proposition \ref{prop:walds}.
By Theorem $4$ of \cite{Waldspurger91}, there exists an odd
fundamental discriminant $-pd_0$ such that $L(f,-pd_0,1) \neq 0$. The
Main Theorem for odd fundamental discriminants implies that the
coefficient $c_{d_0} \neq 0$. Let $-pd$ be an even fundamental
discriminant, then
\begin{align*}
L(f,-pd,1)L(f,1) & = \frac{L(f,-pd,1)}{L(f,-pd_0,1)}\, L(f,-pd_0,1)
\, L(f,1)\\
& = \frac{c_d^2}{\sqrt{pd}} \, \frac{\sqrt{pd_0}}{c_{d_0}^2}\, L(f,-pd_0,1)
\,L(f,1) \\
& = 4 \pi^2 \, \frac{\<f,f>}{\<\vec{e}_f,\vec{e}_f>}\, \frac{c_d^2}{\sqrt{pd}}
\end{align*}
where the second equality follows from Proposition \ref{prop:walds}
and the last one follows from the Main Theorem for odd fundamental
discriminants.
\end{proof}

\bgroup
\def\proofname{Proof of Proposition \ref{prop:walds}}
\begin{proof}
    To prove the
  result, we follow the proof of the Corollary $2$ in
  \cite{Waldspurger}. The same reasoning implies the result once we
  prove that the local factor at $2$ of the weight $3/2$ modular form
  $\Theta_\id{p}(\vec{e}_f)$ is the same for $pd_1$ and $pd_2$. Let
  $\lambda_2$ be the eigenvalue of the Hecke operator $T_2$ acting on
  $f$. Let $\alpha, \alpha'$ denote the roots of the polynomial
  $x^2-\frac{\lambda_2}{\sqrt{2}}x +1$.
\par
The space $S_{3/2}(4p^2, \charp)$ is $4$-dimensional. Generators for
this space are obtained by choosing $2$ local functions at the prime
$p$ and $2$ local functions at the prime $2$. Following the notation
of \cite{Waldspurger} (p.453), define the functions $c_2'[\delta]$ and
$c_2''[\delta]$ on $d$ (where $-pd$ is a fundamental discriminant) by:
\[
c_2'[\delta](d) :=
\begin{cases}
\delta - (2,d)_2 \, \charptwo(2)/\sqrt{2} & \text{ if } 2\nmid d.\\
\delta & \text{ if }2 \mid d,
\end{cases}
\]
where $(*,*)_2$ denotes the Hilbert symbol at $2$, and $\charptwo$ is
the character on $\Q_2^\times$ associated to $\charp$. And the
function
\[
c_2''[\delta](d):=\delta.
\]
\par
Then the set of local functions at the prime $2$ is given by 
\[
\begin{cases}
\{c_2'[\alpha], c_2'[\alpha']\} & \text{ if }\alpha \neq \alpha',\\
\{c_2'[\alpha], c_2''[\alpha]\} & \text{ if }\alpha = \alpha'.
\end{cases}
\]
Moreover, we have $c_2'[\alpha](d)=1$ for $2\parallel d$. But since
the coefficients $c_d$ of $\Theta_\id{p}(\vec{e}_f)$ vanish in this
case, its local function at $2$ (up to a global constant)
must be, in the first case,
\[
c_2'[\alpha] - c_2'[\alpha'].
\]
This function clearly attains the same value at odd and even values
of $d$ (namely $\alpha - \alpha'$).

In the second case, the local function at $2$ must be $c_2''[\alpha]$,
since $c_2''[\alpha](d)=0$ when $2\parallel d$.
This function also clearly attains the same value for odd and even
values of $d$.
The rest of the proof is exactly the same as Waldspurger.
\end{proof}
\egroup

\appendix
\renewcommand{\thesection}{\Alph{section}}
\section{Rankin's Method}

\begin{notation}
If $n,m$ are integers, we write $n \mid m^\infty$ if every prime
factor of $n$ divides $m$. We denote by $\gcd(n,m^\infty)$ the
unique positive integer $M$ that satisfies 
\begin{itemize}
\item $M \mid n$,
\item $M \mid m^\infty$,
\item $\gcd(\frac{n}{M},m)=1$.
\end{itemize}
\end{notation}

    Let $D<0$ be a fundamental discriminant.
    If $\sA$ is an ideal class of $\QQ(\sqrt{D})$,
    we denote $\Theta_{\sA}$ the theta series
    \[
       \Theta_{\sA}(z)
       := \sum_{n=0}^\infty r_{\sA}(n) q^n
       = \frac{1}{2} \sum_{x \in \id{a}} q^{\norm(x)/\norm{\id{a}}},
    \]
    where $\id{a}$ is any ideal in the class $\sA$. It is well known that
    $\Theta_{\sA}$ is a weight 1 modular form of level $\abs{D}$ and
    nebentypus $\varepsilon_D$, where
    $\varepsilon_D:(\Z/D\Z)^\times \rightarrow \CC ^\times$ denotes the
    character $\varepsilon_D(n)=\kro{D}{n}$ of the field
    $\QQ(\sqrt{D})$.

\begin{definition}
Let $f(z)=\sum a(n) q^n$ be a cusp form in $S_2^\new(\Gamma_0(N))$.
Define
$$
L_{\sA}(f,s) :=
\left(
\sum_{(m,N)=1} \frac{\varepsilon_D(m)}{m^{2s-1}}
\right)
\cdot
\left(
\sum_{m=1}^\infty \frac{a(m) r_{\sA}(m)}{m^s}
\right)
$$ which converges for
  $\Re(s) >3/2$.
\end{definition}

\subsection{Rankin's Method}

For each decomposition $D = D_1 D_2$ of $D$ as the product of two
fundamental discriminants, define the Eisenstein series 
\[
    E^{(D_1,D_2)}_s(z) := \frac{1}{2}
    \sum_{\substack{m,n\in\ZZ \\ D_2\mid m}}
    \frac{\varepsilon_{D_1}(m)\,\varepsilon_{D_2}(n)}
    {(mz+n)} \frac{y^s}{|mz+n|^{2s}}
\]
The series $E^{(D_1,D_2)}_s(z)$ is a non-holomorphic weight $1$ modular
form of level $\abs{D}$ and Nebentypus $\varepsilon_{D}$.

Let $\eta=\gcd(N,D)$ and $N_0=N/\eta$.
In \cite{GZ}, they work in the case
$\eta=1$; when $N$ is a perfect square, this restriction makes their
formula for the central value vanish trivially on both sides (see the
remark after Proposition~\ref{prop:sigmaA}). 
\begin{proposition}
    \label{prop:rankin}
    \[
    (4\pi)^{-s} \Gamma(s)  L_{\sA}(f,s) = \<f,
    G_{\bar s-1,\sA}>_{\Gamma_0(N)}
    \]
    where
    \[
    G_{s,\sA}(z) := \trace^{N_0 \abs{D}}_{N}
    \left(\Theta_{\sA}(z) E_{s}^{(1,D)} (N_0z)\right)
    \]
\end{proposition}
\begin{proof} Similar to \cite[(1.2) p. 272]{GZ}.
\end{proof}

\subsection{Computation of the trace}

For $D$ a fundamental discriminant, let 
\[
\kappa(D) :=
\begin{cases}
1 & \text{ if }D>0\\
i & \text{ if }D<0.
\end{cases}
\]

If $D=D_1D_2$ is a decomposition of $D$ as the product of two
fundamental discriminants, $\chi_{D_1,D_2}$ denotes the corresponding
genus character, i.e. for ideals $\sA$ of norm prime to $D$,
$\chi_{D_1,D_2}(\sA) = \varepsilon_{D_1}(\norm \sA) =
\varepsilon_{D_2}(\norm \sA)$.

Recall the usual operator
\[
U_{\abs{D}}(f) := \frac{1}{\abs{D}}
\sum_{j \bmod{\abs{D}}} f\left(\frac{z+j}{\abs{D}}\right)
\]
on spaces of modular forms.

\begin{proposition}
\label{prop:tracecomputed}
Assume $D$ is odd. Then the function $G_{s,\sA}(z)$ defined in the
last proposition is given by
    \[
    G_{s,\sA}(z) = (\sE_s(N_0z)\,\Theta_{\sA}(z))_{|U_{\abs{D}}},
    \]
where 
\begin{equation}
    \sE_s(z) := \sum_{D = D_1D_2}
\frac{\varepsilon_{D_1}(N\eta)\, \chi_{D_1,D_2}(\sA)}
{\kappa(D_1)\,\abs{D_1}^{s+1/2}} E_s^{(D_1,D_2)}(\abs{D_2}z)
\label{eq:sEs}
\end{equation}
The sum is over all decompositions of $D$ as a product of two
fundamental discriminants $D_1$ and $D_2$.
\end{proposition}
\begin{proof}
    As in \cite[(2.4) p.276]{GZ}, we can prove the result with
    \[
    \sE_s(z) = \sum_{\substack{D = D_1D_2\\\gcd(D_1,\eta)=1}}
\frac{\varepsilon_{D_1}(N_0)\, \chi_{D_1,D_2}(\sA)}
{\kappa(D_1)\,\abs{D_1}^{s+1/2}} E_s^{(D_1,D_2)}(\abs{D_2}z)
    \]
    and the statement follows since 
    \[
    \varepsilon_{D_1}(N\eta) =
    \begin{cases}
        \varepsilon_{D_1}(N_0) & \text{when $\gcd(D_1,\eta)=1$,}
        \\
        0 & \text{otherwise.}
    \end{cases}
    \]
\end{proof}

\subsection{Fourier expansions}
From now on we will assume $D$ is odd as in
Proposition~\ref{prop:tracecomputed}. This implies that $\eta$ is odd
and squarefree.

We first give an explicit description of the Fourier
coefficients of the function $\sE_s(z)$ defined in~\eqref{eq:sEs}
above.
\begin{proposition}
We have
\[
\sE_s(z) = \sum_{n\in\ZZ} e_s(n,y) e^{2\pi i nx}.
\]
where the coefficients are given by
\[
e_s(0,y) =
  L(\varepsilon_D,2s+1)
  \, (\abs{D}y)^s
  +
  \frac{\varepsilon_D(N\eta)}{i\sqrt{\abs{D}}}
  \, V_s(0)
  \, L(\varepsilon_D,2s)
  \, (\abs{D} y)^{-s}
\]
if $n=0$ and by
\[
e_s(n,y) =
  \frac{i}{\sqrt{\abs{D}}}
  \,(\abs{D}y)^{-s}
  \, V_s(ny)
  \,\sum_{\substack{d\mid n\\d>0}} \frac{\varepsilon_{\sA}(n,d)}{d^{2s}}
\]
if $n\neq 0$,
with
\[
  \varepsilon_{\sA}(n,d) :=
  \varepsilon_{D_1}(-N\eta\,d)
  \, \varepsilon_{D_2}(n/d)
  \, \chi_{D_1,D_2}(\sA)
\]
for the unique decomposition $D=D_1 D_2$ as a product of fundamental
discriminants such that $\abs{D_2}=\gcd(D,d)$, and where
\[
   V_0(t) := \begin{cases}
       0 & \text{if $t<0$} \\
       -\pi i & \text{if $t=0$} \\
       -2\pi i e^{-2\pi t} & \text{if $t>0$}
   \end{cases}
\]
\end{proposition}
\begin{proof}
The proof from \cite[(3.2) p.277]{GZ} works, with the following differences:
\begin{itemize}
    \item Replace $\varepsilon_{D_1}(N)$ by
        $\varepsilon_{D_1}(N\eta)$,
        as in \eqref{eq:sEs}.
    \item In the last step of the computation of $e_s(n,y)$, in \cite{GZ}
        they use the identity
        \[
        i\varepsilon_{D_1}(-N) = \frac{\varepsilon_D(N)}{i}\,
        \varepsilon_{D_2}(-N),
        \]
        which is only true in the case $(D_2,N)=1$. Thus a formula
        like theirs is good only so far as $\eta=1$, but ours is
        always true.
\end{itemize}
\end{proof}

Note that $\varepsilon_{\sA}(n,d)=0$ unless $\eta\mid d$. In
particular, $e_s(n,y)=0$ unless $\eta\mid n$.
To ease notation,
write $\eta^\ast=\kro{-1}{\eta}\eta$
and $D_0 = D/\eta^\ast$, so that $D = \eta^\ast D_0$ is a discriminant
decomposition, and $N$ is prime to $D_0$
(because we are assuming $D$ is odd, hence squarefree).

We let
\begin{equation}
\begin{split}
\tilde\varepsilon_{\sA}(n,d) & := 
\varepsilon_{\sA}(\eta n, \eta d) \\
&\phantom{:}=
\varepsilon_{D_1}(-Nd)\varepsilon_{\eta^\ast
D_2}(n/d)\chi_{D_1,\eta^\ast
D_2}(\sA),
\end{split}
\label{eq:tildevarepsilon}
\end{equation}
for the unique decomposition $D_0=D_1 D_2$ as a product of fundamental
discriminants such that $\abs{D_2}=\gcd(D_0,d)$.

\begin{corollary}
    \label{coro:fourier}
    The Fourier coefficients of
\[
G_{0,\sA}(z) = \frac{2\pi}{\sqrt{\abs{D}}} \sum_{m=0}^\infty b_{\sA}(m) q^m,
\]
are given by
\[
b_{\sA}(m) =
\sum_{n=0}^{{\abs{D}m}/{N}} \sigma_{\sA}(n)\,r_{\sA}(m\abs{D} - nN),
\]
where
\begin{align*}
\sigma_{\sA}(n)
& = \frac{\sqrt{\abs{D}}}{2\pi}\,e_0(\eta n,y)\, e^{2\pi \eta n y}
\\ &
= \begin{cases}
     \frac{\sqrt{\abs{D}}}{2\pi}L(\varepsilon_D,1) -
     \frac{\varepsilon_D(N\eta)}{2} \,
     L(\varepsilon_D,0)
     & \text{for $n=0$}
     \\
     \sum_{\substack{d\mid n\\d>0}} \tilde\varepsilon_{\sA}(n,d)
     & \text{for $n>0$}
 \end{cases}
\end{align*}
\end{corollary}
\begin{proof}
    Similar to \cite[(3.4) p.281]{GZ}.
\end{proof}

\begin{proposition}
\[
   \sigma_{\sA}(0) =
   \frac{1-\varepsilon_{D}(N\eta)}{2}\cdot \frac{h(D)}{u_D}
\]
\end{proposition}
\begin{proof}
\[
    \sigma_{\sA}(0) =
     \frac{\sqrt{\abs{D}}}{2\pi}L(\varepsilon_D,1) -
     \frac{\varepsilon_D(N\eta)}{2} \,
     L(\varepsilon_D,0).
\]
But
\[
\frac{\sqrt{\abs{D}}}{2\pi}L(\varepsilon_D,1) =
     L(\varepsilon_D,0) = \frac{h(D)}{u_D},
\]
by the class number formula and the functional equation for
$L(\varepsilon_D,s)$.
\end{proof}

Denote:
\[
\sigma_{\sA}(n_0,n_1) :=
     \sum_{\substack{d_0\mid n_0\\d_0>0}} \tilde\varepsilon_{\sA}(n_0 n_1,d_0).
\]
Recall that the number of integral ideals of norm $n$
in $\QQ(\sqrt{D})$ is
\[
    R_D(n) = \sum_{\substack{d\mid n\\d>0}} \varepsilon_D(d).
\]

\begin{lemma} For $n>0$
\begin{enumerate}
\item If $d_0d_1\mid n$ with $\gcd(d_1,D)=1$, then

\[ 
\tilde\varepsilon_{\sA}(n,d_0 d_1) =
\tilde\varepsilon_{\sA}(n,d_0) \,
\varepsilon_D(d_1)
.
\]

\item If $n=n_0 n_1$ with $\gcd(n_1,n_0D)=1$, then
\begin{align*}
\sigma_{\sA}(n)
& = \sigma_{\sA}(n_0,n_1) \, R_D(n_1)
.
\end{align*}

\item Let $n=n_0n_1$, where $n_0=\gcd(n,D^\infty)$. Then
    \[ \sigma_{\sA}(n) = \sigma_{\sA}(n_0,n_1) \, R_D(n) \]

\end{enumerate}

\label{lemma:6.8}
\end{lemma}

\begin{proof}\
\begin{enumerate}
\item \label{l:1}
Follows from the definition of $\tilde\varepsilon_{\sA}(n, d)$ in
\eqref{eq:tildevarepsilon}, because the hypothesis implies $\gcd(D_0,d_0d_1)=\gcd(D_0,d_0)$.
\item \label{l:2}
if $\gcd(n_1,n_0D)=1$ then
\begin{align*}
\sigma_{\sA}(n_0 n_1)
& = \sum_{d_0\mid n_0} \sum_{d_1\mid n_1} \tilde\varepsilon_{\sA}(n_0 n_1,d_0 d_1)
& \text{(since $\gcd(n_0,n_1)=1$)}
\\
& = \sum_{d_0\mid n_0} \sum_{d_1\mid n_1} \tilde\varepsilon_{\sA}(n_0 n_1,d_0)\,\varepsilon_D(d_1)
& \text{(by part (\ref{l:1}))}
\\
& = \sigma_{\sA}(n_0,n_1) 
\sum_{\substack{d_1\mid n_1\\d_1>0}} \varepsilon_D(d_1)
\\
& = \sigma_{\sA}(n_0,n_1) R_D(n_1)
.
\end{align*}

\item \label{lemma:6.8:3}
Since $n/n_1 = n_0 \mid D^\infty$, there is a unique ideal of norm
$n/n_1$, hence $R_D(n_1) = R_D(n)$. Thus the statement follows directly from
(\ref{l:2}).
\end{enumerate}
\end{proof}

\begin{lemma}
Let $\tilde\eta = \gcd(n, \eta^\infty)$, and $n'=\gcd(n, D_0^\infty)$.
    Then
    \[
    \sigma_{\sA}(n) = \tilde\varepsilon_{\sA}(n,\tilde\eta)\cdot
       \sum_{d'\parallel n'}
            \frac{\tilde\varepsilon_{\sA}(n,\tilde\eta d')}
                 {\tilde\varepsilon_{\sA}(n,\tilde\eta)}
       \cdot R_D(n)
   \]
   \label{lemma:6.9}
\end{lemma}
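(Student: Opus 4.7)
The plan is to reduce the statement to the case already handled by Lemma~\ref{lemma:6.8}(iii), which gives $\sigma_{\sA}(n) = \sigma_{\sA}(n_0,n_1)\,R_D(n)$ for $n_0 = \gcd(n,D^\infty)$. Since $D$ is odd and squarefree we have $\gcd(\eta,D_0)=1$, so $n_0 = \tilde\eta\,n'$ with $\gcd(\tilde\eta,n')=1$, and every $d_0\mid n_0$ factors uniquely as $d_0 = d_\eta\,d'$ with $d_\eta\mid\tilde\eta$ and $d'\mid n'$. Expanding yields
\[
\sigma_{\sA}(n_0,n_1) = \sum_{d_\eta\mid\tilde\eta}\sum_{d'\mid n'} \tilde\varepsilon_{\sA}(n,d_\eta d'),
\]
and the task reduces to showing that this double sum equals $\sum_{d'\parallel n'}\tilde\varepsilon_{\sA}(n,\tilde\eta d')$.

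The argument rests on two successive vanishing observations applied to the character $\varepsilon_{\eta^\ast D_2}$ appearing in~\eqref{eq:tildevarepsilon}. First, writing $n/d = (\tilde\eta/d_\eta)\,(n'/d')\,n_1$ with the last two factors coprime to $\eta$, non-vanishing of $\varepsilon_{\eta^\ast D_2}$ at $n/d$ forces $\gcd(\tilde\eta/d_\eta,\eta)=1$; since $\tilde\eta$ is supported only on primes dividing $\eta$, this collapses the outer sum to $d_\eta=\tilde\eta$.

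Second, since $\gcd(d_\eta,D_0)=1$, the fundamental discriminant $D_2$ with $|D_2|=\gcd(D_0,d)$ depends only on $d'$. The remaining condition $\gcd(D_2,n/d)=1$ reduces, using $\gcd(n_1,D)=1$, to $\gcd(D_2,n'/d')=1$, which (since $D_0$ is squarefree) says $v_p(d')=v_p(n')$ for every prime $p\mid D_2$; together with $v_p(d')=0$ for every $p\mid D_1 = D_0/D_2$ (forced by the definition $|D_2|=\gcd(D_0,d')$), this means $v_p(d')\in\{0,v_p(n')\}$ at every prime $p\mid n'$, which is precisely the condition $d'\parallel n'$. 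The term $d'=1$ (corresponding to $D_2=1$) satisfies all coprimalities trivially, so $\tilde\varepsilon_{\sA}(n,\tilde\eta)\ne 0$, legitimizing the division in the statement; multiplying by $R_D(n)$ finishes the proof.

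The main obstacle will be the bookkeeping forced by the fact that the decomposition $D_0=D_1D_2$ in \eqref{eq:tildevarepsilon} is not fixed but varies with $d$, so one must check that the coprime factorization $n_0 = \tilde\eta\cdot n'$ decouples the analyses on the $\eta$-side and the $D_0$-side cleanly. This decoupling works precisely because $D$ is odd and squarefree, ensuring $\gcd(\tilde\eta,n')=1$ and allowing the two non-vanishing conditions to be imposed independently on $d_\eta$ and $d'$.
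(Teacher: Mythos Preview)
Your proof is correct and follows essentially the same approach as the paper: both reduce via Lemma~\ref{lemma:6.8}(iii) to the sum over $d_0\mid n_0=\tilde\eta n'$, then use the nonvanishing of the $\varepsilon_{\eta^\ast}$-factor to force $\tilde\eta\mid d_0$ and the nonvanishing of the $\varepsilon_{D_2}$-factor to force $d'\parallel n'$. The only cosmetic differences are that you set up the factorization $d_0=d_\eta d'$ from the outset and verify $\tilde\varepsilon_{\sA}(n,\tilde\eta)\neq 0$ at the end (via $d'=1$), whereas the paper checks this nonvanishing first by direct inspection; neither change affects the substance.
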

\begin{proof}
First note that 
    \[
    \tilde\varepsilon_{\sA}(n,\tilde\eta) = 
  \varepsilon_{D_0}(-N\,\tilde\eta)
  \, \varepsilon_{\eta^\ast}(n/\tilde\eta)
  \, \chi_{D_0,\eta^\ast}(\sA).
    \]
Since $\gcd(\eta,n/\tilde\eta)=1$, and $\gcd(D_0,N\tilde\eta)=1$,
it follows that $$\tilde\varepsilon_{\sA}(n,\tilde\eta)\neq 0.$$
\par
Let $n=n_0 n_1$ with $n_0=\gcd(n,D^\infty)$ as in the previous lemma,
and note that $n_0 = \tilde\eta n'$, since $\eta$ and $D_0$ are
relatively prime.
\par
Suppose $d_0\mid n_0$ is such that
$\tilde\varepsilon_{\sA}(n, d_0)\neq 0$.
By the definition of 
$\tilde\varepsilon_{\sA}$, it follows that
\[
  \varepsilon_{\eta^\ast}(n/d_0) \neq 0
  \qquad\text{and}\qquad
  \varepsilon_{D_2}(n/d_0) \neq 0,
\]
where $\abs{D_2}=\gcd(D_0, d_0)$.
The first inequality implies that
$\tilde\eta\mid d_0$, so we can write $d_0 = \tilde\eta d'$, where $d'\mid
n'$. Since $d'\mid n'\mid D_0^\infty$, and $\abs{D_2}=\gcd(D_0, d_0)$,
it follows that $d'\mid D_2^\infty$.
The second inequality implies $\varepsilon_{D_2}(n'/d')\neq 0$, 
and so we finally conclude that $d'\parallel n'$.

It follows from the above discussion that
    \begin{align*}
    \sigma_{\sA}(n_0,n_1) & =
       \sum_{d'\parallel n'}
            \tilde\varepsilon_{\sA}(n,\tilde\eta d')
       \\
       & =
       \tilde\varepsilon_{\sA}(n,\tilde\eta)\cdot
       \sum_{d'\parallel n'}
            \frac{\tilde\varepsilon_{\sA}(n,\tilde\eta d')}
                 {\tilde\varepsilon_{\sA}(n,\tilde\eta)}
    \end{align*}
    This finishes the proof by Lemma~\ref{lemma:6.8} (\ref{lemma:6.8:3}).
\end{proof}

\begin{lemma}
    The function
    \[
    d'\mapsto 
            \frac{\tilde\varepsilon_{\sA}(n,\tilde\eta d')}
                 {\tilde\varepsilon_{\sA}(n,\tilde\eta)}
                 \]
                 is multiplicative.
                 In particular, 
                 \[
       \sum_{d'\parallel n'}
            \frac{\tilde\varepsilon_{\sA}(n,\tilde\eta d')}
                 {\tilde\varepsilon_{\sA}(n,\tilde\eta)}
                 =\begin{cases}
                     \delta(n) & \text{if all terms are $1$,} \\
                     0 & \text{otherwise}
                 \end{cases}
                 \]
where $\delta(n):=2^t$, with $t$ the number of prime factors of
$\gcd(n,D_0)$.
\label{lemma:6.10}
\end{lemma}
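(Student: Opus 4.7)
The plan is to prove the multiplicativity of $f(d') := \tilde\varepsilon_\sA(n, \tilde\eta d')/\tilde\varepsilon_\sA(n, \tilde\eta)$ by a prime-by-prime analysis on the factors of $D_0$, and then conclude the sum evaluation from the standard formula for sums of multiplicative functions over unitary divisors.

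For each $d' \parallel n'$, let $T := \{q \mid D_0 : q \mid d'\}$. Since $D$ is odd and squarefree, so is $D_0$, and the decomposition $D_0 = D_1 D_2$ appearing in~\eqref{eq:tildevarepsilon} satisfies $D_2 = \prod_{q \in T} q^*$ (writing $q^*$ for the signed prime making $D_0$ a product of fundamental discriminants). Using the local factorization of the genus character $\chi_{D_1, \eta^* D_2}(\sA) = \prod_{q \mid D_1}\psi_q(\sA)$, where $\psi_q(\sA) := \varepsilon_{q^*}(\norm\sA)$, the definition~\eqref{eq:tildevarepsilon} rewrites as
\[
\tilde\varepsilon_\sA(n, \tilde\eta d') = \varepsilon_{\eta^*}(n/d)\prod_{q\notin T}\bigl[\varepsilon_{q^*}(-Nd)\,\psi_q(\sA)\bigr]\prod_{q\in T}\varepsilon_{q^*}(n/d),
\]
with $d = \tilde\eta d'$ and products over the primes of $D_0$. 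Each such expression lies in $\{\pm 1\}$ (by arguments analogous to the nonvanishing checks in Lemma~\ref{lemma:6.9}), so $f(d')$ is well-defined.

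For a coprime factorization $d' = d'_1 d'_2$ with both $d'_i$ unitary divisors of $n'$, we have $T = T_1 \sqcup T_2$. The identity $f(d'_1 d'_2) = f(d'_1)\,f(d'_2)$ then reduces to a verification at each prime $q \mid D_0$: the factor $\varepsilon_{\eta^*}(d')$ is trivially multiplicative, and for $q \notin T_1 \cup T_2$ both sides contribute $\varepsilon_{q^*}(d'_1)\,\varepsilon_{q^*}(d'_2)$. The substantive case is $q \in T_1$ (hence $q \nmid d'_2$), where the required identity $\varepsilon_{q^*}(n/(\tilde\eta d'_1))\cdot\varepsilon_{q^*}(d'_2) = \varepsilon_{q^*}(n/(\tilde\eta d'_1 d'_2))$ follows from $\varepsilon_{q^*}(d'_2)^2 = 1$ together with $n/(\tilde\eta d'_1) = d'_2\cdot n/(\tilde\eta d'_1 d'_2)$.

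Once multiplicativity is established, the sum over unitary divisors factors as
\(
\sum_{d'\parallel n'} f(d') = \prod_{p\mid n'}\bigl(1 + f(p^{v_p(n')})\bigr),
\)
each factor equal to $2$ or $0$. Since $D_0$ is squarefree, the primes of $n' = \gcd(n,D_0^\infty)$ coincide with those of $\gcd(n, D_0)$, giving exactly $t$ factors, so the product is $2^t = \delta(n)$ when every $f(p^{v_p(n')}) = 1$ (equivalently, when all terms in the sum are $1$, by multiplicativity) and vanishes otherwise. The main obstacle is the bookkeeping for how $D_1$, $D_2$ and the genus character $\chi_{D_1,\eta^* D_2}(\sA)$ shift as $d'$ varies; this is handled transparently by the local decomposition through the characters $\psi_q$.
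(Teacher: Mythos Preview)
Your proof is correct and follows essentially the same approach as the paper: both verify multiplicativity by direct computation, tracking how the discriminant decomposition and the genus character shift under coprime factorizations $d'=d'_1d'_2$. The only organizational difference is that the paper first derives a closed formula for the ratio, namely
\[
\frac{\tilde\varepsilon_{\sA}(n,\tilde\eta d')}{\tilde\varepsilon_{\sA}(n,\tilde\eta)}
= \varepsilon_{\eta^\ast D_1}(d')\,\varepsilon_{D_2}\!\left(-N\,\tfrac{n}{d'}\right)\chi_{\eta^\ast D_1,D_2}(\sA),
\]
using the identity $\chi_{D_1,\eta^\ast D_2}=\chi_{D_0,\eta^\ast}\cdot\chi_{\eta^\ast D_1,D_2}$, and then multiplies out the formulas for $d''$ and $d'''$ directly; you instead decompose $\tilde\varepsilon_{\sA}$ into local factors $\psi_q$ indexed by primes $q\mid D_0$ and check the identity prime by prime. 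Your localization makes the bookkeeping somewhat more transparent, but the underlying computation is the same.
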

\begin{proof}
    Let $D_0=D_1 D_2$ where $\abs{D_2} = \gcd(D_0,d')$.
    We have
    \[
    \tilde\varepsilon_{\sA}(n, \tilde\eta d') =
    \varepsilon_{D_1}(-N\tilde\eta d')\,
    \varepsilon_{\eta^\ast D_2}\left(\frac{n}{\tilde\eta d'}\right)
    \chi_{D_1,\eta^\ast D_2}(\sA)
    \]
    and
    \[
    \tilde\varepsilon_{\sA}(n,\tilde\eta) =
    \varepsilon_{D_0}(-N\tilde\eta)\,
    \varepsilon_{\eta^\ast}\left(\frac{n}{\tilde\eta}\right)
    \chi_{D_0,\eta^\ast}(\sA)
    \]
    Hence
    \begin{align*}
    \frac{\tilde\varepsilon_{\sA}(n,\tilde\eta d')}
         {\tilde\varepsilon_{\sA}(n,\tilde\eta)}
         & = \varepsilon_{D_1}(d') \, \varepsilon_{D_2}(-N\tilde\eta) \,
         \varepsilon_{\eta^\ast}(d') \, \varepsilon_{D_2}\left(\frac{n}{\tilde\eta d'}\right)
         \chi_{\eta^\ast D_1,D_2}(\sA)
         \\
    & =\varepsilon_{\eta^\ast D_1}(d')\,\varepsilon_{D_2}\left(-N\frac{n}{d'}\right)
    \chi_{\eta^\ast D_1,D_2}(\sA)
    \end{align*}
    (we have used $\chi_{D_1,\eta^\ast D_2} = \chi_{D_0,\eta^\ast}
    \cdot \chi_{\eta^\ast D_1, D_2}$).

    To check multiplicativity, let $d'=d''d'''$ with
    $\gcd(d'',d''')=1$, and let $D_0=D_1''D_2''=D_1'''D_2'''$ be the
    discriminant decompositions corresponding to $d''$ and $d'''$,
    i.e. $\abs{D_2''} =\gcd(D_0,d'')$ and $\abs{D_2'''}=\gcd(D_0,d''')$.
    Note that $D_2=D_2'' D_2'''$, and so $D_1'' = D_1 D_2'''$ and
    $D_1''' = D_1 D_2''$.

    Then
    \[
    \frac{\tilde\varepsilon_{\sA}(n,\tilde\eta d'')}
         {\tilde\varepsilon_{\sA}(n,\tilde\eta)}
         =\varepsilon_{\eta^\ast
         D_1''}(d'')\varepsilon_{D_2''}(d''')\varepsilon_{D_2''}\left(-N\frac{n}{d'}\right)
         \chi_{\eta^\ast D_1'',D_2''}(\sA)
    \]
    and
    \[
    \frac{\tilde\varepsilon_{\sA}(n,\tilde\eta d''')}
         {\tilde\varepsilon_{\sA}(n,\tilde\eta)}
         =\varepsilon_{\eta^\ast
         D_1'''}(d''')\varepsilon_{D_2'''}(d'')\varepsilon_{D_2'''}\left(-N\frac{n}{d'}\right)
         \chi_{\eta^\ast D_1''',D_2'''}(\sA).
    \]
    Hence the product
    \begin{multline*}
    \frac{\tilde\varepsilon_{\sA}(n,\tilde\eta d'')}
         {\tilde\varepsilon_{\sA}(n,\tilde\eta)}
         \cdot
    \frac{\tilde\varepsilon_{\sA}(n,\tilde\eta d''')}
         {\tilde\varepsilon_{\sA}(n,\tilde\eta)} 
         \\ 
    \begin{aligned}
         & = \varepsilon_{\eta^\ast D_1''D_2'''}(d'')
           \varepsilon_{\eta^\ast D_1'''D_2''}(d''')
           \varepsilon_{D_2}\left(-N\frac{n}{d'}\right)
           \chi_{\eta^\ast D_1,D_2}(\sA)
           \\
         & = \varepsilon_{\eta^\ast D_1 (D_2''')^2}(d'')
           \varepsilon_{\eta^\ast D_1 (D_2'')^2}(d''')
           \varepsilon_{D_2}\left(-N\frac{n}{d'}\right)
           \chi_{\eta^\ast D_1,D_2}(\sA)
           \\
         & = \varepsilon_{\eta^\ast D_1}(d')
           \varepsilon_{D_2}\left(-N\frac{n}{d'}\right)
           \chi_{\eta^\ast D_1,D_2}(\sA)
           \\
         & = \frac{\tilde\varepsilon_{\sA}(n,\tilde\eta d')}
                  {\tilde\varepsilon_{\sA}(n,\tilde\eta)}.
     \end{aligned}
     \end{multline*}
\end{proof}

\begin{lemma}Suppose there is an ideal $\id{a}\in\sA$ such that
    $\normid{a} \equiv -nN\pmod{D}$, and let
            $\id{b}$ be an ideal of norm $n$.
            Then the following
            conditions are equivalent:
    \begin{enumerate}
     \item \label{x:1} $\frac{\tilde\varepsilon_{\sA}(n,\tilde\eta d')}
         {\tilde\varepsilon_{\sA}(n,\tilde\eta)}=1$ for all $d'\parallel n'$.
     \item \label{x:2}
         $\chi_{l^\ast,D/l^\ast}(\id{ab}) =
         \varepsilon_{l^\ast}(-N)$ 
         for all prime discriminants $l^\ast \mid D_0$.
     \item \label{x:3}
	 There is an ideal $\id{q}$ in the same genus as $\id{ab}$
         such that \[\normid{q}\equiv -N\pmod{D_0}.\]
    \end{enumerate}
    Moreover, this also implies
    \[
    \tilde\varepsilon_{\sA}(n,\tilde\eta) = 1
    \]
    \label{lemma:6.11}
\end{lemma}
\begin{proof}
	We first prove that (\ref{x:1}) implies (\ref{x:2}).
    Let $l^\ast$ be a prime discriminant with $l\mid D_0$, and consider the following two cases:
    \begin{itemize}
        \item $l\nmid n$: then $\gcd(\normid{ab},l)=1$, so
            $\chi_{l^\ast,D/l^\ast}(\id{ab})=\varepsilon_{l^\ast}(\normid{ab})=\varepsilon_{l^\ast}(-N)$,
            by hypothesis.
    \item $l\mid n$: use (\ref{x:1}) with $d'=\gcd(n,l^\infty)$, thus
            $|D_2|=l$, and we have
            \begin{align*}
	    \chi_{D/l^\ast,l^\ast}(\id{a})
	    & = \varepsilon_{D/l^\ast}(d')\, \varepsilon_{l^\ast}(-N\frac{n}{d'})
	    \\
	    & = \varepsilon_{l^\ast}(-N)\, \varepsilon_{D/l^\ast}(d')\, \varepsilon_{l^\ast}(n/d')
	    \\
	    & = \varepsilon_{l^\ast}(-N)\, \chi_{D/l^\ast,l^\ast}(\id{b}),
	    \end{align*}
            where the last equality holds since $d'\mid D$.
    \end{itemize}

    To prove that (\ref{x:2}) implies (\ref{x:1}), note that
    since the expression in (\ref{x:1}) is multiplicative, it is enough to
    check it for $d'=\gcd(n,l^\infty)$, where $l$ is any prime
    dividing $n'$. 
    Since $l\mid n$, a computation similar to the second case above applies. 

    Clearly (\ref{x:3}) implies (\ref{x:2}); to see the converse
    take an ideal $\id{c}$ in the same genus as
    $\id{ab}$, with $\gcd(\normid{c},D_0)=1$. By (\ref{x:2}), we know that
    \[
       \normid{c} \equiv -N r^2 \pmod{D_0}
    \]
    for some $r\in(\ZZ/D_0)^\times$, and we can take
    $\id{q}=\tilde{r}\id{c}$ where $\tilde{r}\in\ZZ$ is such that
    $\tilde{r}r\equiv 1\pmod{D_0}$.

    For the final assertion, we use the definition of
    $\tilde\varepsilon_{\sA}$ in \eqref{eq:tildevarepsilon} with
    $d=\tilde\eta$, so that
    $D_1=D_0$ and $D_2=\eta^\ast$, and thus
    \begin{align*}
    \tilde\varepsilon_{\sA}(n,\tilde\eta)
    & = \varepsilon_{D_0}(-N\tilde\eta)\,
    \varepsilon_{\eta^\ast}(n/\tilde\eta)\,
    \chi_{D_0,\eta^\ast}(\sA)
    \\
    & = \varepsilon_{D_0}(-N)\,\chi_{D_0,\eta^\ast}(\id{b})\,
    \chi_{D_0,\eta^\ast}(\sA),
    \end{align*}
    since
    $\chi_{D_0,\eta^\ast}(\id{b})=\varepsilon_{D_0}(\tilde\eta)\,
                                  \varepsilon_{\eta^\ast}(n/\tilde\eta)$.
    Hence,
    \begin{align*}
    \tilde\varepsilon_{\sA}(n,\tilde \eta)
    & = \varepsilon_{D_0}(-N)
    \,
    \chi_{D_0,\eta^\ast}(\id{q})
    \\
    & = \varepsilon_{D_0}(-N)
    \,
    \varepsilon_{D_0}(-N) = 1.
    \end{align*}
\end{proof}

Denote by $R_{\gen{b}}(n)$ the number of integral ideals of norm $n$
in a given genus $\gen{b}$.
We can finally obtain a closed formula for $\sigma_{\sA}(n)$ when $n>0$:
\begin{proposition}\label{prop:sigmaA}
    For $n>0$, 
    suppose
    there is an ideal $\id{a}\in\sA$ such that
    $\normid{a} \equiv -nN\pmod{D}$.
    Then
\[
\sigma_{\sA}(n)
= \delta(n) \sum_{\gen{q}\in Q}
R_{\gen{\sA\id{q}}}(n)
\]
where the sum is over the set of genera
\[
Q := \set{ \gen{q} \st \normid{q}\equiv -N\pmod{D_0}}.
\]
\end{proposition}
\begin{proof}
We have
\begin{align*}
        \sigma_{\sA}(n)
        &= \tilde\varepsilon_{\sA}(n,\tilde \eta) \cdot \sum_{d'\parallel
        n'}\frac{\tilde\varepsilon_{\sA}(n,\tilde \eta
        d')}{\tilde\varepsilon_{\sA}(n,\tilde \eta)} \cdot R_D(n)
           & \text{(by Lemma \ref{lemma:6.9})}
        \\
        &= \tilde\varepsilon_{\sA}(n,\tilde \eta) \cdot \sum_{d'\parallel
        n'}\frac{\tilde\varepsilon_{\sA}(n,\tilde \eta
        d')}{\tilde\varepsilon_{\sA}(n,\tilde \eta)}
        \cdot \sum_{\substack{\id{b}\\\normid{b}=n}} 1
        \\
        &=\sum_{\substack{\id{b}\\\normid{b}=n}}
        1
        \cdot
        \begin{cases}
            \delta(n) & \text{if $\gen{ab}\in Q$,} \\
            0 & \text{otherwise.}
        \end{cases}
\end{align*}
where the last equality follows from Lemma~\ref{lemma:6.10}
and Lemma~\ref{lemma:6.11}.
\par
Now we note that $\gen{ab}\in Q$ is equivalent to $\gen{b}=\gen{aq}$ for
some $\gen{q}\in Q$, hence we can rewrite the last summation as
\begin{align*}
\sigma_{\sA}(n)
& = 
\delta(n)
\sum_{\gen{q}\in Q}
\sum_{\substack{\id{b}\\\normid{b}=n\\\gen{b}=\gen{aq}}}
1
\\
& =
\delta(n) \sum_{\gen{q}\in Q} R_{\gen{aq}}(n)
\end{align*}
\end{proof}

\begin{remark}\
    \begin{enumerate}
        \item In the case $\eta=1$, we have $D_0=D$, and the condition
            on $\normid{q}$ in the definition of $Q$ determines its
            genus $\gen{q}$, in case it exists. This depends on the
            sign of $\varepsilon_D(\normid{q})$, so we have
            \[
            \#Q = \begin{cases}
                1 & \text{if $\varepsilon_D(N)=-1$,} \\
                0 & \text{if $\varepsilon_D(N)=1$.}
            \end{cases}
            \]
            This is the case of \cite{GZ}, and in this case the proposition
            is part (a) of Proposition 4.6 in \cite[p.285]{GZ}.
            As noted before, when $N$ is a perfect square
            we have $\#Q=0$ for all $D$ prime to $N$.
        \item When $\eta\neq 1$, we have $Q\neq\emptyset$. Indeed,
            for any $\alpha\in\ZZ$ such that
            $\varepsilon_{\eta^\ast}(\alpha)=\varepsilon_{D_0}(-N)$,
            by genus theory there is an ideal $\id{q}$ with
    \[
    \normid{q}\equiv
    \begin{cases}
    -N\pmod{D_0}, \\
    \alpha \pmod{\eta}.
    \end{cases}
    \]
    The number of such $\alpha \bmod \eta$, up to squares is
    $2^{t-1}$, where $t$ is the number of prime factors of $\eta$.
    Each one results in an ideal lying in a different genus,
    hence
    \[
    \#Q = 2^{t-1}.
    \]
\item In the particular case $N=p^2$, it follows that
    \[
    \#Q =
    \begin{cases}
        0 & \text{if $p\nmid D$,} \\
        1 & \text{if $p\mid D$.}
    \end{cases}
    \]
    \end{enumerate}
\end{remark}

\subsection{The central value of $L$-series}

We conclude this section with a formula for the central value of
$L$-series which is similar to \cite[(4.4) p.283]{GZ}, but not requiring
$\gcd(D,N)=1$ as in the original formulation.
As remarked above, this generalization is essential to obtain a
non-trivial result in the case of level $p^2$, which is the main
interest of this paper.

\begin{theorem}
\label{thm:rankin}
Let $D<0$ be an odd fundamental discriminant, $\sA$ be an ideal in
$\Q[\sqrt{D}]$ and $f(z)$ be a cusp form in
$S_2^\new(\Gamma_0(N))$. Then, 
    \[
    L_{\sA}(f,1) = \frac{8\pi^2}{\sqrt{\abs{D}}} \<f, g_{\sA}>,
    \]
with $g_{\sA} = g_{\sA}^{(N)} = \sum b_{\sA}(m) q^m$,
where
\begin{multline*}
  b_{\sA}(m) :=
  \frac{1-\varepsilon_D(N\eta)}{2}\cdot \frac{h(D)}{u_D} r_{\sA}(m)
  \\
  +
  \sum_{\gen{q}\in Q}
  \sum_{n=1}^{{\abs{D}m}/{N}}
  \delta(n) r_{\sA}(m\abs{D} - nN)
  R_{\gen{\sA\id{q}}}(n),
\end{multline*}
where the first sum is over the set of genera
\[
Q := \set{ \gen{q} \st \normid{q}\equiv -N\pmod{D_0}},
\]
and where $\delta(n):=2^t$, with $t$ the number of prime factors of
$\gcd(n,D_0)$.
\end{theorem}
\begin{proof}
    This follows by combining
    Proposition~\ref{prop:rankin},
    Corollary~\ref{coro:fourier}, and
    Proposition~\ref{prop:sigmaA}, using the renormalization
    \[
    g_{\sA}(z) = \frac{\sqrt{\abs{D}}}{2\pi}\, G_{0,\sA}(z).
    \]
    Note that when evaluating each term
    $\sigma_{\sA}(n)\,r_{\sA}(m\abs{D} - nN)$ in the sum of
    Corollary~\ref{coro:fourier}, the hypothesis of
    Proposition~\ref{prop:sigmaA} holds whenever
    $r_{\sA}(m\abs{D} - nN)\neq 0$, so we can indeed substitute the
    value of $\sigma_{\sA}(n)$ without restriction.
\end{proof}

\begin{remark}
We expect a similar formula to hold for any fundamental discriminant
$D<0$. The case of even discriminants is harder since the discriminant
is not square free in this case. Nevertheless, the result for odd
discriminants will be enough for our purposes.
\end{remark}

\def\MR#1{}
\bibliography{bibliography}
\bibliographystyle{abbrv}

\end{document}